\documentclass[abstracton, paper=a4, fontsize=11pt,DIV=12,bibliography=totoc]{scrartcl}

\usepackage{geometry}
\usepackage[ngerman,english]{babel}
\selectlanguage{english}

\usepackage{amsmath}
\usepackage{amsrefs}
\usepackage{amssymb}
\usepackage{amsthm}
\usepackage{caption,subcaption}
\usepackage{cite}
\usepackage{color}
\usepackage{enumitem}
\usepackage{framed}
\usepackage{graphicx}
\usepackage{hyperref}
\usepackage{layout}
\usepackage{mathtools}
\usepackage{mathrsfs}
\usepackage{placeins}
\usepackage{pdfpages}
\usepackage{tikz}
\usepackage{txfonts}
\usepackage[normalem]{ulem}
\usepackage{aligned-overset}
\usepackage{enumitem}
\usepackage{mathrsfs}
\usepackage{enumitem}
\usepackage{wrapfig}

\newtheorem{theorem}{Theorem}

\newtheorem{corollary}[theorem]{Corollary}
\newtheorem{definition}[theorem]{Definition}

\newtheorem{lemma}[theorem]{Lemma}
\newtheorem{notation}[theorem]{Notation}
\newtheorem{proposition}[theorem]{Proposition}
\newtheorem{remark}[theorem]{Remark}

\newcommand{\stkout}[1]{\ifmmode\text{\sout{\ensuremath{#1}}}\else\sout{#1}\fi}


\newcommand{\fr}[2]{\frac{{#1}}{{#2}}}


\newcommand{\babla}{\boldsymbol{\nabla}}
\newcommand{\bdx}{\,\mathrm{d}\mathbf{x}}
\newcommand{\bx}{\mathbf{x}}
\newcommand{\belta}{\boldsymbol{\Delta}}
\DeclareMathOperator{\Div}{div}
\DeclareMathOperator{\Id}{Id}
\DeclareMathOperator{\loc}{loc}
\newcommand{\one}{\mathbf{1}}

\newcommand{\dx}{\,\mathrm{d}x}
\newcommand{\dz}{\,\mathrm{d}z}
\newcommand{\dk}{\,\mathrm{d}k}
\newcommand{\dS}{\,\mathrm{d}S}
\newcommand{\dd}{\mathrm{d}}

\newcommand{\ls}{\lesssim}
\newcommand{\gs}{\gtrsim}
\newcommand{\bB}{\mathbf{B}}

\newcommand{\dt}{\, \mathrm{d}t}

\newcommand{\ddt}{\frac{{\mathrm d}}{{\mathrm dt}}}

\newcommand{\eps}{\varepsilon}

\newcommand{\R}{\mathbb{R}}
\newcommand{\N}{\mathbb{N}}

\newcommand{\sppt}{{\mathrm{supp}}}

\newcommand{\mres}{\mathbin{\vrule height 1.6ex depth 0pt width
0.13ex\vrule height 0.13ex depth 0pt width 1.3ex}}
\renewcommand{\llcorner}{\mres}

\DeclarePairedDelimiter\norm{\lVert}{\rVert}
\newcommand\abs[1]{\left| #1\right|}
\newcommand\set[1]{\left\{#1\right\}}
\newcommand\bra[1]{\left(#1\right)}
\newcommand\jump[1]{\left[#1\right]}

\newcommand{\dissipation}{\ensuremath{{D}}}
\newcommand{\D}{\dissipation}

\newcommand{\E}{\mathcal{E}}
\newcommand{\F}{\mathcal{F}}
\newcommand{\Hdist}{\mathcal{H}}
\newcommand{\Hd}{\dot{H}}

\newcommand{\uu}{u}
\newcommand{\ub}{\bar{\uu}}
\newcommand{\hh}{h}
\newcommand{\hb}{\bar{\hh}}
\newcommand{\V}{\mathcal{V}}
\newcommand{\Vb}{\overline{\mathcal{V}}}
\newcommand{\Vt}{\widetilde{\mathcal{V}}}
\newcommand{\vel}{V}
\newcommand{\vt}{\tilde{v}}
\newcommand{\delV}{\mathit{\Delta}V}
\newcommand{\delchi}{\mathit{\Delta}\chi}
\newcommand{\Po}{\mathcal{P}}
\definecolor{darkred}{rgb}{0.9,0.1,0.1}

\setlength{\oddsidemargin}{0.5in}
\setlength{\evensidemargin}{0.5in} \setlength{\textwidth}{5.5in}
\setlength{\topmargin}{.0in} \setlength{\textheight}{8.2in}
\numberwithin{equation}{section}   
\numberwithin{theorem}{section}

\mathtoolsset{showonlyrefs}

\begin{document}

\title{
	Convergence to the planar interface for a nonlocal free-boundary evolution}
\author{
Felix Otto\begin{footnote}{MPI for Mathematics in the Sciences, E-mail address: \href{mailto:felix.otto@mis.mpg.de}{felix.otto@mis.mpg.de}.}\end{footnote},
Richard Schubert\begin{footnote}{University of Bonn, E-mail address: \href{mailto:schubert@iam.uni-bonn.de}{schubert@iam.uni-bonn.de}.}\end{footnote},
 and
	Maria G. Westdickenberg\begin{footnote}{RWTH Aachen University, E-mail address: \href{mailto:maria@math1.rwth-aachen.de}{maria@math1.rwth-aachen.de}.}\end{footnote}
}
\date{\today}
\maketitle
\begin{abstract}
We capture optimal decay for the Mullins-Sekerka evolution, a nonlocal, parabolic free boundary problem from materials science. Our main result establishes convergence of BV solutions to the planar profile in the physically relevant case of ambient space dimension three. Far from assuming small or well-prepared initial data, we allow for initial configurations that do not have graph structure and in which the region of positive phase is not simply connected, hence explicitly including the regime of Ostwald ripening. In terms only of initially finite (not small) excess mass and excess surface energy, we establish that the surface becomes a Lipschitz graph within a fixed timescale (quantitatively estimated) and remains trapped within this setting for future times. To obtain the graph structure, we leverage regularity results from geometric measure theory. At the same time, we extend a duality method previously employed for one dimensional PDE problems to higher dimensional, nonlocal geometric evolutions. Optimal algebraic decay rates of excess energy, dissipation, and graph height are obtained.

\noindent Keywords: Mullins-Sekerka, gradient flow, relaxation, $L^1$-method, mean curvature.\\

\noindent AMS subject classifications: 35K55, 49Q20, 53E10, 53E40, 58J35.
\end{abstract}

\section{Introduction}

The Mullins-Sekerka interfacial evolution (abbreviated MS in the following) is  a nonlocal free boundary
evolution: The normal velocity of the interface is a nonlocal function of the interface and, in particular, its mean curvature.
Essential features of MS are the preservation of (signed) mass and the reduction
of surface area. In addition, there is a scale invariance: The solution space is invariant under
a rescaling of
length by a factor of $\lambda$ and time by a factor of $\lambda^3$. Hence MS is a geometric version of a third-order parabolic equation.

We are interested in the relaxation to a planar interface for fairly wild initial data: The initial interface is required neither to be a graph nor to consist of a single connected component. Singularities are not a matter of idle curiosity in MS but rather a fact of life:
A well-known and naturally occurring configuration is that of \emph{Ostwald ripening}, in which the positive phase is distributed in multiple, disconnected islands, and the smaller islands shrink and disappear (a singularity in the flow). We allow for such configurations.

We do not study existence but rather analyze the qualitative and quantitative properties of any BV solution. We will obtain optimal relaxation rates to the flat limiting configuration based only on monitoring the excess (absolute) mass or ``volume'' (between the positive phase $\Omega_+\subseteq \R^{d+1}$ and  $\{z>0\}$):
\begin{align}
  \V:=\int_{\R^{d+1}} |\chi|\,\bdx,\qquad \text{where}\qquad
  \chi:=\mathbf{1}_{\Omega_+}-\mathbf{1}_{\R^d\times\{z>0\}}\label{eq:vfinite}
\end{align}
and the
excess surface area (compared to the flat configuration):
\begin{align}
  \E&=\int_{\Gamma}1-e_z\cdot n\dS,\label{eq:E}
\end{align}
where $e_z$ is the unit vector in $z$-direction and $n$ is the \textbf{unit inward normal} to the boundary of the positive phase $\Gamma=\partial \Omega_+$.

In its strong form, the Mullins-Sekerka dynamics consists of the evolution of a $d$-dimen\-sio\-nal hypersurface $\Gamma$, which is the boundary of a region $\Omega_+$ and induces a potential $f:\R^{d+1}\to \R$ as the two-sided harmonic extension of the mean curvature (the sum of the principal curvatures) $H$ of $\Gamma$ according to
\begin{align}\label{eq:f}
  \begin{cases}
    \belta f=0 & \text{in }\R^{d+1}\setminus\Gamma,\\
    f=H &\text{on }\Gamma.
  \end{cases}
\end{align}
The surface $\Gamma$ itself evolves with normal velocity determined by the jump in normal derivative of $f$ across $\Gamma$ via:
\begin{align}\label{eq:Vel}
  \vel:=-[\babla f\cdot n],
\end{align}
where here and throughout we use square brackets to indicate  $[\babla f\cdot n]=\nabla f_+\cdot n -\nabla f_-\cdot n$.
\begin{definition}\label{def:weaksol}
For given initial data $\one_{\Omega_+(0)}\in BV_{\loc}(\R^{d+1};\set{0,1})$, we call an $L^1$-continuous family of characteristic functions $\one_{\Omega_+(t)}\in BV_{\loc}(\R^{d+1};\set{0,1})$ together with\\ $f\in L^2(0,\infty;\Hd^1(\R^{d+1}))$ a weak BV solution of the MS dynamics if the following conditions are satisfied.
\begin{itemize}
\item[(1)] For any $T>0$, all $\varphi\in C_c^\infty(\R^{d+1}\times [0,
  T])$, and $\chi$ as in \eqref{eq:vfinite} there holds
\begin{align}
\lefteqn{\left(\int_{\R^{d+1}}\chi\varphi\bdx\right)(T)- \left(\int_{\R^{d+1}}\chi\varphi\bdx\right)(0)}\notag\\
&=\int_0^T \int_{\R^{d+1}}\chi\partial_t\varphi\bdx \dt -\int_0^T\int_{\R^{d+1}}\babla f\cdot \babla\varphi\bdx\dt.\label{eq:weak_dyn}
\end{align}
\item [(2)] Let $\Gamma(t)\subseteq \R^{d+1}$ be the support of the Radon measure $\babla \one_{\Omega_+}$ and let $n$ be the Radon-Nikodym derivative of $\babla \one_{\Omega_+}$ with respect to its total variation measure $|\babla \one_{\Omega_+}|$. For almost all $t>0$, and all $\xi\in C_c^\infty(\R^{d+1};\R^{d+1})$, there holds
\begin{align}
  \int_{\Gamma}&\!(\Id-n\otimes n)\colon\! \nabla \xi \dS =\int_{\R^{d+1}}\!\one_{\Omega_+}\Div (f\xi)\bdx,\label{eq:weak_H}
\end{align}
where the notation $\int_{\Gamma}\dS$ is to be understood as $\int_{\R^{d+1}}\dd |\babla \one_{\Omega_+}|$.
\end{itemize}
\end{definition}
We will see in Section~\ref{S:graph} that there exists a measurable function $H$ such that the integrals in \eqref{eq:weak_H} are equal to $-\int_{\Gamma} H\xi\cdot n \dS$, and $H$ can be considered the generalized mean curvature. (Note that with our choice of orientation, the mean curvature $H$ is positive where $\Omega_+$ is convex.) We remark that \eqref{eq:weak_dyn} encodes $\partial_t \chi-\Delta f=0$ in the sense of distributions; in particular, $f$ is harmonic outside $\Gamma$. For a smooth solution, \eqref{eq:Vel}
follows from ~\eqref{eq:weak_dyn}
via integration by parts, and the boundary condition \eqref{eq:f} follows from~\eqref{eq:weak_H}, integration by parts, and the equality with $-\int_{\Gamma} H\xi\cdot n \dS$ observed above.

We will work throughout the paper with BV solutions that satisfy the hypothesis:
\begin{quote}\textbf{Hypothesis (H).}
The energy is absolutely continuous and nonincreasing with a.e.\ time derivative
\begin{align}
-\ddt\E\geq D \coloneqq   \int_{\R^{d+1}}|\babla f|^2\,\bdx,\label{eq:Edot}
\end{align}
so that for all $s<t$ there holds
\begin{align}\label{eq:D_int}
  \E(t)-\E(s)\leq - \int_s^t D(\tau)\, \dd\tau.
\end{align}
Furthermore we assume that the solution is smooth forward in time if it reaches a state with graph structure and small enough Lipschitz constant, i.e., that there exists $\varepsilon_0>0$ such that
$$\Omega_+(t)=\set{(x,z)\in \R^{d+1}:z>h(x,t)} \quad \text{and}\quad \norm{\nabla h(\cdot,t)}_\infty\le \varepsilon_0$$ implies that
the solution is smooth in space-time for some interval $[t,T)$, $T>t$.
\end{quote}
In particular, Hypothesis \textbf{(H)} allows for singular events that occur when connected components of $\Omega_+$ disappear (as in Ostwald ripening) or collide. We motivate our hypothesis in the context of the available literature in Subsection~\ref{subsec:lit} below. The reader who prefers not to consider weak solutions may alternatively suppose that there is a strong solution outside of finitely many singular times.

\begin{remark}
  The De-Giorgi type inequality from \cite{HS22} suggests replacing~\eqref{eq:Edot} by $-\ddt\E\geq \frac 12 D$, in which case our results carry through unchanged.
\end{remark}

The Mullins-Sekerka evolution is the gradient flow of the excess surface energy $\E$
with respect to the $\Hd^{-1}$ metric, where the configuration space is given by characteristic functions. We will regularly make use of the energy dissipation from Hypothesis \textbf{(H)} in the form
\begin{align}\label{eq:E_bound_simple}
\E\le \E_0\qquad\text{ for all }\,t>0.
\end{align}

We obtain optimal algebraic-in-time decay rates for the energy gap within the $L^1$ setting, i.e., for initially finite $L^1$ distance to a half-space, $\V_0<\infty$. The $L^1$ norm is naturally related to the fact that the dynamics is mass conserving, i.e., the value $\int_{\R^{d+1}}\chi \bdx$ is a conserved quantity of the evolution. (Note however that the $L^1$ norm $\V$ \emph{is not typically preserved}; an elementary example is the shrinking of two initially circular islands symmetrically located on either side of a flat interface.) Our most interesting result is for $d=2$, where we show that for rough initial data, the evolution \emph{enters the graph setting} within finite time, after which it remains trapped close to the linear evolution.

Our main result for $d=2$ (ambient space dimension three) is:

\begin{theorem}[Relaxation to flat]\label{thm:main}
Consider an initial set $\Omega_{+,0}\subseteq\R^3$ of locally finite perimeter with reduced boundary $\Gamma_0$ having finite excess mass $\V_0$ and excess energy $\E_0$. Consider a solution of MS satisfying  \textbf{(H)}. Then
for all times $t>0$, the excess mass is bounded by
\begin{align}\label{eq:V_bounded}
\V\ls \V_0+\E_0^{\frac {3}2},
\end{align}
and the energy decays according to
\begin{align}
\E&\ls \min\set{\E_0,\frac{\V_0^2+\E_0^3}{t^{\frac {4}3}}}.\label{eq:E_decay}
\end{align}
Moreover,
there exists a time $T_g\sim \E_0^{3/2}$ (``graph-time'') such that
for all $t\geq T_g$,
the interface $\Gamma(t)$ is the graph of a Lipschitz function
\begin{align}\label{eq:graph}
\Gamma(t)=\set{(x,z)\in \R^{3}:z=h(x,t)},
\end{align}
with
\begin{align}\label{eq:Lipschitz}
	\norm{\nabla h(\cdot,t)}_\infty\le1.
\end{align}
Finally, for $t\geq T_g$, the dissipation and height function $h$ satisfy
\begin{align}\label{eq:D_decay}
D(t)\ls \frac{\E\bra{\frac t2}}{t}
\end{align}
and
\begin{align}
  \norm{h}_\infty+t^\frac{1}{3}\norm{\nabla h}_\infty&\lesssim\frac{\V_0+\E_0^{\frac{3}2}}{t^{\frac 23}}, \quad\text{respectively}.\label{eq:h_decay}
\end{align}
\end{theorem}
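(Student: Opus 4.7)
The proof naturally decomposes into a pre-graph phase (forcing the interface into graph form) and a post-graph phase (obtaining optimal rates once it is), bridged by geometric-measure-theoretic regularity at the transition time $T_g$.

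\textbf{Pre-graph phase.} My first step is a duality/$L^1$ estimate for $\V$. Testing the weak formulation \eqref{eq:weak_dyn} against a regularization of the harmonic extension of $\sgn(\chi)$ morally identifies $\tfrac{d}{dt}\V$ with an $\Hd^{-1}$-pairing of $\partial_t\chi$ against $\sgn(\chi)$. Combining $\|\babla f\|_2^2 = D$ with an interpolation between $\E$, $\V$, and $D$ dictated by the scaling $[\V]=L^3$, $[\E]=L^2$, $[D]=L^{-3}/T$ for $d=2$, I expect an inequality of the form $\tfrac{d}{dt}\V\lesssim \E^{a}D^{b}$ for appropriate exponents. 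Integrating in time and using $\int_0^\infty D\dtau \le \E_0$ via Cauchy--Schwarz yields \eqref{eq:V_bounded}. Coupling the resulting $\V$-bound with an isoperimetric-type interpolation between $\E$, $\V$, and $D$ closes an ODE for $\E$ that produces the algebraic decay \eqref{eq:E_decay}.

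\textbf{Passage to graph form.} Pigeonholing $\int_0^{T_g}D\dtau\le \E_0$ against scale-invariant thresholds (with $T_g\sim\E_0^{3/2}$) locates a time at which the generalized mean curvature $H$ --- identified via \eqref{eq:weak_H} and controlled in $L^2(\Haus^d\mres\Gamma)$ by $D$ --- is small on all relevant scales. Allard-type $\varepsilon$-regularity then yields local $C^{1,\alpha}$ graph representations on each ball where $\E$ is small, and a covering argument --- using the global $\V$-bound to prevent vertical escape of the interface --- upgrades these to the single Lipschitz graph \eqref{eq:graph} with slope bound \eqref{eq:Lipschitz}. Hypothesis \textbf{(H)} then propagates smoothness to all $t\ge T_g$.

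\textbf{Post-graph decay.} In the graph regime, the leading-order dynamics is $\partial_t h +|\nabla|\,\Delta h\approx 0$, with $\E\approx \tfrac12\int |\nabla h|^2\dx$ and $D\approx \int\bigl||\nabla|^{3/2}h\bigr|^2\dx$. I would extend the duality/$L^1$-method previously used for one-dimensional local parabolic PDE to this higher-dimensional nonlocal setting by testing the (linearized) evolution against the Riesz potential of $\sgn(h)$ to obtain $L^1$-type control of $h$. Combined with the energy inequality, this closes a differential inequality that, via interpolation against the $\Hd^1$-type control from $\E$, forces the $L^\infty$ decay \eqref{eq:h_decay}. The dissipation bound \eqref{eq:D_decay} follows from averaging \eqref{eq:D_int} over $[t/2,t]$ combined with approximate monotonicity of $D$ in the smooth regime.

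\textbf{Main obstacle.} The hardest step is the post-graph decay: extending the $L^1$-duality technique to a nonlocal, geometrically nonlinear flow requires commuting the Riesz-potential test function with the curvature operator (whose linearization involves the nonlocal $|\nabla|\,\Delta$) and carefully estimating quadratic-and-higher nonlinear error terms so as not to spoil the decay rates in \eqref{eq:h_decay}. A secondary difficulty is the passage to graph form, where quantitative matching of the local varifold regularity with the global $L^1$-control coming from $\V$ must be done uniformly in space, despite the non-smoothness of BV solutions in the pre-graph phase.
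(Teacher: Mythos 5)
Your architecture (pre-graph $L^1$ control, pigeonhole to a graph time, Allard regularity, post-graph duality plus interpolation) matches the paper's decomposition into Propositions~\ref{prop:1}, \ref{prop:2}, and \ref{prop:3}, but three of your key steps would fail as described. First, the pre-graph mass bound: a pointwise-in-time inequality $\ddt \V\ls \E^aD^b$ cannot deliver \eqref{eq:V_bounded}. Dimensionally ($[\V]=L^{3}$, $[\E]=L^2$, $[D]=L^{-3}T^{-1}$ for $d=2$) no pure product $\E^aD^b$ has the units of $\ddt\V$; you would be forced to include a power of $\V$ itself, and Gronwall then gives multiplicative (exponential-in-$\E_0$) rather than the additive bound $\V_0+\E_0^{3/2}$. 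Moreover, testing against (a regularization of) $\sgn(\chi)$ produces the surface integral $\int_\Gamma[\babla f\cdot n]\dS$, which is not controlled by $D$ on an unbounded interface. The paper instead runs a genuine space--time duality: it tests against the \emph{backward adjoint solution} $\bar u$ of \eqref{eq:uv1}--\eqref{eq:dual_lin2} with terminal data $\psi$, so that the linear contributions cancel exactly (cf.\ \eqref{WHY}) and only nonlinear errors with integrable weights $(T-t)^{-1/3},(T-t)^{-2/3},(T-t)^{-5/6}$ remain; half-powers of $\V_T$ are then absorbed by Young and a buckling argument. Relatedly, you cannot derive the energy decay \eqref{eq:E_decay} in the pre-graph phase: the Nash-type interpolation $\E\ls\V^{6/7}D^{4/7}$ is only available once $\Gamma$ is a Lipschitz graph (it rests on $\E\sim\norm{\nabla h}_2^2$ and $\norm{\nabla^2h}_2\ls\norm{H}_2$); before $T_g$ the decay statement is vacuous since $\E_0^3/t^{4/3}\gtrsim\E_0$ for $t\ls\E_0^{3/2}$.

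Second, $L^2(\Gamma)$ control of $H$ is \emph{not} enough for Allard in $d=2$: condition~\ref{it:allard3} requires $H\in L^p(\Gamma)$ with $p>d=2$, and $p=2$ is exactly the borderline case. The paper obtains $\norm{H}_{L^4(\Gamma)}\ls(1+(\E D^2)^{1/20})D^{1/2}$ by combining the Meyers--Ziemer trace inequality with Sch\"atzle's monotonicity formula to bound the density ratios $\rho^{-2}\abs{\babla\one_{\Omega_+}}(\bB_\rho)$; without this your condition~(iii) cannot be verified. Third, and most seriously for the post-graph phase, you have no mechanism to keep the solution in the small-slope regime for $t\ge T_g$: Hypothesis \textbf{(H)} gives smoothness but not persistence of $\norm{\nabla h}_\infty\le1$, on which every interpolation estimate depends. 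The paper's mechanism is the differential inequality $\ddt D\ls D^4$ (Lemma~\ref{l:Ddot}, proved via the evolution equation for the mean curvature and delicate surface interpolations), which yields $\ddt(\E D^2)\le0$ and hence traps the dynamics. The same inequality is precisely the ``approximate monotonicity of $D$'' that you invoke without proof for \eqref{eq:D_decay}; it is one of the hardest estimates in the paper, not a consequence of averaging \eqref{eq:D_int}. Finally, your post-graph test function (Riesz potential of $\sgn(h)$) would again face the closure problem above; the paper reuses the backward adjoint solution, now with the improved weights coming from the decay of $D$ in \eqref{eq:D_decay_duality_1}.
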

\begin{remark}[Notation]
In order to get straight to the discussion of the main result, we relegate a discussion of the (hopefully rather natural) notation---including the order notation $\sim,\, \lesssim$ etcetera---to Subsection~\ref{subsec:notation}. The meticulous or skeptical reader may want to start there.
\end{remark}
\begin{remark}(Uniqueness)
	We do not require uniqueness, and the result is true for \emph{any} weak solution. Once the solution has become a graph with small Lipschitz constant, we expect uniqueness to hold (see the discussion in Subsection~\ref{subsec:lit}, below), but we do not require it for our result.
\end{remark}
\begin{remark}[Dimension-dependent rates in the full-space problem]\label{rem:dim_dep}
  Because we work on the full space, there is no spectral gap for the linearized evolution and the algebraic rates are optimal; see Subsection~\ref{subsec:optimality}. Unlike the generic exponential rates that one obtains on compact domains, the algebraic decay rates in~\eqref{eq:E_decay},~\eqref{eq:D_decay}, and~\eqref{eq:h_decay} are dimension-dependent (see also Proposition~\ref{prop:3}) and in this sense reflect \textbf{the structure of the evolution}.
\end{remark}
\begin{remark}[Flat initial data]
The $L^1$ bound~\eqref{eq:V_bounded} reflects the linear behavior when
\begin{align}\label{ao11}
	\E_0\ls \V_0^\frac{2}{3},
\end{align}
in the sense that the nonlinear system obeys the same bound $\V\lesssim \V_0$ satisfied by the linearization (see the discussion in Subsection~\ref{sss:rr} below).
The condition~\eqref{ao11} is a properly nondimensional way of imposing approximate flatness
of the initial configuration. It does so only on average; it does not impose a graph structure,
let alone of uniformly small slope.
\end{remark}
\begin{remark}[The dimensionless quantity]
It is critical for our result that there exists a nondimensional combination of $\E$ and $D$ whose smallness suffices to control the Lipschitz norm of the function $h$. In dimension $d\geq 3$, the slope cannot be controlled in terms of $\E$ and $D$ alone (the quantity $D^3$ \emph{just fails} in $d=3$), and one would need a different approach to obtain Lipschitz control. The exciting observation of our paper in ambient dimension three ($d=2$) is that the elementary but critical fact that the dimensionless quantity $\E\D^2$ becomes small (cf. Lemma~\ref{l:small}) can be leveraged to establish \textbf{generation and propagation of
Lipschitz control} (cf. Section~\ref{S:graph}). In $d=1$, as already exploited in \cite{COW}, smallness of the dimensionless quantity $\E^2\D$ is \textbf{propagated} and serves to maintain Lipschitz control.
\end{remark}

\begin{remark}($d=1$)
Proposition~\ref{prop:3} establishes that also in $d=1$, the solution for appropriate initial data
within the graph setting remains in the graph setting and decays optimally. Additionally Proposition~\ref{prop:1} in $d=1$ shows that the excess mass remains bounded in terms of the initial values as expected (at least for a time quantified in terms of the initial excess energy). Hence ``all'' that is missing for $d=1$ is the analogue of Proposition~\ref{prop:2}.

However we do not believe that the analogue of Theorem~\ref{thm:main} holds for $d=1$. Indeed a heuristic calculation for a flat configuration perturbed only by a round island with radius $R$  at distance $L$  from the interface has dissipation of order $(R^2\log(L/R))^{-1}$. In particular, the dimensionless quantity scales like $\E^2D\sim (\log(L/R))^{-1}$, so that the bound $\E^2D\ll 1$ does not exclude islands if $R\ll L$ and there can be no analogue of Proposition~\ref{prop:2} assuring graph structure based on this smallness condition. In addition one cannot hope for a relaxation rate of the energy as in~\eqref{eq:E_decay}, since the normal velocity at the island can be made arbitrarily small by increasing $L$ (which  changes neither $\E_0$ nor $\V_0$).

\end{remark}
\subsection{Review of literature}\label{subsec:lit}

In the literature, the terms Mullins-Sekerka and Hele-Shaw are sometimes used interchangeably, but we will use the convention from \cite{EO97}. The MS and Hele-Shaw evolutions are related: Their one-phase formulations coincide, while in the two-phase model, the Hele-Shaw model has normal velocity
given by the (continuous over the boundary $\Gamma$) pressure gradient
\begin{align}
	V=\nabla q\cdot n,
\end{align}
where the pressure is harmonic outside $\Gamma$ and has jump given by the mean curvature on the surface
\begin{align}
	-\Delta q&=0,\quad \text{in }\R^{d+1}\setminus \Gamma\\
	[q]&=H,\quad \text{on }\Gamma.
\end{align}
We refer to \cite{EO97} for details.

We concentrate here on literature about the two-phase Mullins-Sekerka evolution and do not mention results in neighboring areas like the one-phase Mullins-Sekerka evolution, the mean curvature flow, and the Stefan problem with Gibbs-Thomson boundary condition. We note but do not comment additionally on the rich connection to the Cahn-Hilliard equation as a diffuse interface approximation of MS and corresponding convergence results. For an excellent, broad overview, we refer to the introduction of \cite{HS22}.

\paragraph{Existence} Most of the available existence results for MS are on bounded domains. We comment separately on short-time and long-time results.

Short-time existence results for MS on bounded domains go back to \cite{Che93} for weak solutions in $d+1=2$ dimensions, and to \cite{CHY96,ES97} for strong solutions in arbitrary dimension. As mentioned above, starting from  non-connected initial data, components can merge or vanish. Even worse: starting from connected initial data, the evolution can produce non-connected phases. Thus the evolution is expected to be \emph{non-smooth} at least at finitely many points in time, which poses a natural restriction on the existence time of strong solutions. Very recently, \cite{EMM23} proves short-time existence of strong solutions in the whole space in the graph setting in $d+1=2$. This is the only existence result on unbounded domains of which we are aware.

Even though non-smooth events are expected during the evolution and need to be incorporated in a weak solution concept, the important properties of surface reduction and (signed) mass conservation are maintained across these events. Long-time existence of weak solutions via approximation by minimizing movements is proved in the BV setting in \cite{LS95} with an additional assumption on the convergence of the energy. Building on results by Sch\"atzle \cite{Sch}, the existence of varifold solutions that ``contain'' BV solutions is proved in \cite{Roe05} without the additional assumption. The recent work \cite{HS22} provides long-time existence of weak gradient flow solutions that satisfy a De Giorgi type inequality for energy and dissipation; in particular an inequality corresponding to \eqref{eq:D_int} (with factor $\tfrac 12$) holds true.

A combination of \cite{EMM23}, \cite{HS22}, and a work in preparation \cite{FHLS} on weak-strong uniqueness  is our justification for Hypothesis \textbf{(H)}: Due to \cite{HS22} it seems reasonable that weak solutions satisfying the correct dissipation behavior also exist on the whole space. An extension of \cite{EMM23} to $d+1=3$ dimensions seems possible albeit challenging; in this case the long-time existence of a strong solution would be guaranteed by our uniform control of the dissipation, and weak-strong uniqueness would guarantee that a solution remains smooth once it has become regular.

\paragraph{Ostwald ripening}

Ostwald ripening consists of the coarsening of initial configurations in which  many small, nearly spherical islands of one of the phases are present. The larger islands grow at the expense of the smaller ones, which eventually disappear. During this process the number of connected components decreases and the average size of islands grows until only the largest component (in our case unbounded) remains. We refer to \cite{Nie99,AF03} and the references mentioned therein for a detailed analysis of this stage of the evolution.

Our result includes the regime of Ostwald ripening: One can consider initial configurations in which the unbounded components of positive and/or negative phase are complemented by spherical or nearly spherical islands of the opposite phase---or initial data that flows into such a regime.

\paragraph{Relaxation rates} We are interested in capturing and quantifying convergence to the longtime limit. On bounded domains, convergence to the groundstate is exponential (see Remark~\ref{rem:dim_dep}) since the linearization exhibits a spectral gap. This is made rigorous in \cite{Che93, ES98} where exponential convergence is proved (in $d+1=2$ and $d+1\geq 2$, respectively) for initial data that are close to a ball in a strong sense. In \cite{JMPS} exponential convergence to a collection of equally sized discs is proved for a so-called flat flow solution of MS on the torus of dimension $d+1=2$. They consider a large class of initial data and their method relies on a quantitative Alexandrov Soap Bubble Theorem.
An existence and stability analysis for an interface with $\frac{\pi}2$ contact angle at the boundary of the domain is conducted in \cite{ARW,GR} (in $d+1=2,\,3$ and $d+1=2$, respectively), including exponential stability for minimizers in appropriate situations.

Regarding convergence rates for the whole space problem, where no spectral gap is available, a relaxation result in $d+1=2$ is established in \cite{COW} in terms of the energy gap, the dissipation, and the squared $\Hd^{-1}$ distance
\begin{align}
\Hdist\coloneqq \norm{\chi(0)}^2_{\Hd^{-1}}
\end{align}
associated to the gradient flow. The assumptions on the initial data are  $\E_0^2 D_0\ll 1$, that $\Gamma_0$ is the graph of a function $h(0)$ with $\norm{h_x(0)}_\infty \le 1$, and
\begin{align}
\Hdist_0\coloneqq \norm{\chi(0)}^2_{\Hd^{-1}}<\infty
\end{align}
(rather than $\V_0<\infty$).
The method exploits the interpolation inequality
\begin{align*}
\E\lesssim \Hdist^{\frac 12}D^{\frac 12}
\end{align*}
to obtain the decay rates
\begin{align}
\E\ls \frac{\Hdist_0}{t}, \quad D\ls \frac{\Hdist_0}{t^2},\label{cow}
\end{align}
which are the same algebraic decay rates as pointed out by Brezis \cite{Bre} for a gradient flow with respect to a \emph{convex energy}.
A corollary is the control
\begin{align*}
\norm{h}_\infty+t^{\frac 13}\norm{\nabla h}_{\infty}\ls\frac{\Hdist_0}{t^{\frac 13}}.
\end{align*}

Although  the  $t^{-1}$ decay rate of the energy from \eqref{cow} is the same algebraic rate that we will obtain for $d=1$ in
Proposition~\ref{prop:3} below,
the $L^1$ approach yields a stronger result for compactly supported perturbations, since it allows non-neutral perturbations of the half\-space, whereas $\Hdist_0<\infty$ implies $\int_{\R^d} h_0\dx=0$.
In $d=2$, our $L^1$ result picks up the faster dimension-dependent decay, while the method of \cite{COW} is blind to the dimension.

The most important distinction to be drawn between \cite{COW} and the present work is that here, for $d=2$, we show that the graph structure and Lipschitz bound on the height are \emph{generated by the dynamics}, whereas in \cite{COW} this is an assumption on the initial data. We remark that convergence rate results for Mullins-Sekerka for non-perturbative, non-graph initial data are extremely rare; to the best of our knowledge, there is only \cite{JMPS} for flat flow solutions in two dimensions (without resolving dependence of the rate on the initial data) and the present work.

The $L^1$ method used here was  previously introduced to establish relaxation results for the one-dimensional Cahn-Hilliard equation \cite{OSW,BSW}, and is shown here to be a useful tool for obtaining sharp relaxation rates for geometric evolutions.

\subsection{Discussion of optimality}\label{subsec:optimality}

The rates in \eqref{eq:h_decay} are optimal: It is exactly these rates that hold for the geometrically linearized problem. In order to see this, consider
\begin{alignat}{2}
    -\belta f&=0  \quad&\text{in }\R^d\times \set{z>0},\label{eq:lin1}\\
    f&=\Delta h \quad&\text{on }\R^d\times \set{z=0},\label{eq:lin2}\\
    h_t&=-2f_z \quad&\text{on }\R^d\times \set{z=0},\label{eq:lin3}\\
    h(0)&=h_0\quad &\text{on }\R^d\times \set{z=0},\label{eq:lin4}
\end{alignat}
where the flat geometry is fixed and decoupled from $h$. The boundary values for $f$ are imposed by $h$ in form of the linearized expression for the mean curvature (cf. \eqref{eq:H}) and the jump of the normal derivative of $f$ at the flat interface determines the linearization of the normal velocity, which is just $-h_t$ (cf. \eqref{eq:norm_vel}).

In order to solve problem \eqref{eq:lin1}--\eqref{eq:lin4}, we Fourier transform \eqref{eq:lin1} and \eqref{eq:lin2} in the $x$ variable and deduce: \begin{align*}
-\partial_{zz}^2\hat f(k,z)&=-\abs{k}^2\hat f(k,z),\\
\hat f(k,0)&=-\abs{k}^2\hat h(k),
\end{align*}
where $k$ is the tangential wavenumber and $\hat{f}$ and $\hat{h}$ are the Fourier-transform of $f$ and $h$ in the tangential direction, respectively. Combining this with the growth condition on $\hat{f}(k,z)$ from $\norm{\nabla f}_{L^2(\R^{d+1}\setminus\{z=0\})}<\infty$, we obtain
\begin{align*}
\hat{f}(k,z)=-\exp(-\abs{k}z)\abs{k}^2\hat{h}(k),
\end{align*}
Finally, using \eqref{eq:lin3}, we arrive at the closed equation
\begin{align}\label{eq:Fourier}
\partial_t \hat{h}+2|k|^3\hat{h}=0.
\end{align}
By the above computation we see that the Dirichlet-to-Neumann map $A:f\mapsto -f_z$ on $\R^d$ is represented by the multiplier $\abs{k}$ in Fourier-space which is why we denote $\abs{\nabla}\coloneqq A$. With this notation \eqref{eq:Fourier} can be written in physical space as
\begin{align}\label{eq:linearization}
\partial_t h-2|\nabla|\Delta h=0,
\end{align}
and the Fourier symbol of $-2|\nabla|\Delta$ is exactly given by $2|k|^3$ in the tangential wave number $k$. From the Fourier representation
 one obtains
\begin{align*}
\hat{h}(t)=\hat{G}(t)\hat{h}_0,
\end{align*}
with $\hat{G}(t,k)=\exp(-2\abs{k}^3t)$. Thus, $h$ itself is given by
\begin{align*}
h(t)=G(t)\ast h_0,
\end{align*}
where
\begin{align}\label{eq:profile}
G(t,x)=\frac{1}{t^{\frac d3}}\tilde G\bra{\frac{x}{t^{\frac 13}}}
\end{align}
for some profile $\tilde G=G(1)$. In particular
\begin{align*}
 \norm{\hat{G}(t)}_1\ls t^{-\frac d3},\qquad \norm{\;\abs{k}\hat{G}(t)}_1\ls t^{-\frac{d+1}3},
\end{align*}
 which together with the estimates
\begin{align*}
 \norm{h}_\infty&\le \norm{G}_\infty\norm{h_0}_1\ls \norm{\hat G}_1\norm{h_0}_1,\\
 \norm{\nabla h}_\infty&\le \norm{\nabla G}_\infty\norm{h_0}_1\ls \norm{\; \abs{k}\hat G}_1\norm{h_0}_1,
\end{align*}
yields
\begin{align}\label{eq:h_decay_int}
 \norm{h}_\infty+ t^{\frac 13} \norm{\nabla h}_\infty\lesssim t^{-\frac{d}{3}}\int_{\R^d}|h_0| \dx.
\end{align}
Hence initial data in $L^1$ lead to an $L^\infty$ decay of $t^{-\frac{d}{3}}$, which is precisely the decay captured in \eqref{eq:h_decay}.

\subsection{Method}\label{subsec:method}
There are two things to explain: Where does the graph structure come from and how are the relaxation rates obtained?

\subsubsection{Graph structure}
We begin with the former. It is an elementary observation that
\begin{lemma}\label{l:small}
  For every $\eps$ there exists $T_g\leq \frac{2}{3\sqrt{\eps}}\E_0^{\frac 32}$ such that
\begin{align}\label{eq:small_quantity}
\bra{\E D^2}(T_g)\leq \eps.
\end{align}
\end{lemma}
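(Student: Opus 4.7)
The plan is to argue by contradiction, leveraging only the energy--dissipation inequality~\eqref{eq:Edot} from Hypothesis~\textbf{(H)}. Set $T := \frac{2}{3\sqrt{\eps}}\E_0^{3/2}$ and suppose that no admissible $T_g \in [0,T]$ exists, i.e., that $\E(t)D(t)^2 > \eps$ for almost every $t \in [0,T]$. Since $\E$ is nonnegative and nonincreasing, if $\E$ vanished at some $t_0 \le T$, then $D$ would vanish on $[t_0,T]$ by~\eqref{eq:Edot}, making $\E D^2 \equiv 0 \leq \eps$ there and furnishing a trivial $T_g$; hence I may assume $\E(t)>0$ on $[0,T]$.

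The next step is a short ODE manipulation. First I would rearrange the pointwise assumption into $D(t) > \sqrt{\eps/\E(t)}$ and combine it with $-\ddt\E \geq D$ to produce
\begin{align*}
-\sqrt{\E}\,\ddt\E \;\geq\; \sqrt{\E}\,D \;>\; \sqrt{\eps}\qquad\text{a.e.\ on }[0,T].
\end{align*}
Recognizing the left-hand side as $-\tfrac{2}{3}\ddt\E^{3/2}$ and integrating, which is justified by the absolute continuity of $\E$ (and hence of $\E^{3/2}$) guaranteed by~\textbf{(H)}, I would conclude
\begin{align*}
\E_0^{3/2} - \E(T)^{3/2} \;\geq\; \tfrac{3\sqrt{\eps}}{2}\, T \;=\; \E_0^{3/2},
\end{align*}
forcing $\E(T) \leq 0$ and contradicting $\E(T) > 0$.

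I do not foresee a real obstacle; the argument is a standard Gronwall-type computation. The only piece of insight is that the exponent $3/2$ is the unique power for which the $\sqrt{\E}$ factor produced by the rearrangement of $\E D^2 > \eps$ pairs cleanly with $\ddt\E$ to yield a quantity decreasing linearly in time, which in turn gives the sharp time scale $T \sim \E_0^{3/2}/\sqrt{\eps}$ stated in the lemma.
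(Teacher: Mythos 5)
Your proof is correct and rests on exactly the same computation as the paper's, namely $\E^{1/2}D\leq-\tfrac{2}{3}\ddt\E^{3/2}$ integrated over $[0,T]$ with $T=\tfrac{2}{3\sqrt{\eps}}\E_0^{3/2}$; the paper phrases it as an averaging (infimum $\leq$ mean) argument rather than a contradiction, but the two are contrapositives of one another. Your handling of the degenerate case $\E(t_0)=0$ is a harmless extra precaution.
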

\begin{proof}
The proof follows using $T= \frac{2}{3\sqrt{\eps}}\E_0^{\frac 32}$ and the inequality almost everywhere in time:
\begin{align}
(\E D^2)^{\frac 12}=\E^{\frac{1}{2}}D\overset{\eqref{eq:Edot}}\leq-\frac 23\ddt \E^{\frac 32}.\label{id}
\end{align}
Then since $\E$ and hence $\E^\frac{3}{2}$ is nonincreasing, we deduce
\begin{align}
\inf_{t\leq T}\big(\E D^2\big)^\frac 12\leq \frac{1}{T}\int_0^T (\E D^2)^{1/2}\dt\overset{\eqref{id}} \leq \frac 2{3T} \E_0^{\frac 32}= \eps^\frac 12.
\end{align}
\end{proof}
For $\eps$ sufficiently small,  $\E D^2\le \eps$ implies graph structure
of $\Gamma$ and the Lipschitz control
\begin{align}\label{eq:Lip_control_int}
	\norm{\nabla h}_\infty\ls (\E D^2)^{\frac 16},
\end{align}
which we show using on the one hand results of Meyers \& Ziemer \cite{MZ,Zie} and Schätzle~\cite{Sch} to establish local control on the $L^p$ norm of the mean curvature $H$ and on the other hand Allard's regularity theory \cite{All} to convert $L^p$ control of the curvature into graph structure and Lipschitz continuity with respect to the plane $\{z=0\}$. An interpolation between small scale control of $H$ via $D$ and large scale control of the comparison plane via $\E$ yields exactly \eqref{eq:Lip_control_int}.

Based on \eqref{eq:Lip_control_int},  we will distinguish between an initial layer in time, where boundedness of the energy will suffice, and the small slope regime
\begin{align}\label{ao08}
\norm{\nabla h}_\infty\ll 1,
\end{align}
where decay of  energy and dissipation will be established and exploited.
In the regime \eqref{ao08}, we
leverage the control of $D$, $\E D^2$ to deduce
\begin{align}
	\norm{\nabla^2h}_p\lesssim \E^{\frac{4-p}{6p}}D^{\frac{2(p-1)}{3p}}\label{ao12}
\end{align}
for a suitable $p\in (2,4)$. This we derive by appealing to a trace-Sobolev estimate and a Meyers-type perturbative argument.

We then consider the dissipation of the dissipation; based on \eqref{ao12}, we
establish the differential inequality
\begin{align}
\ddt D\lesssim D^4.\label{eq:ddt_D_int}
\end{align}
Together with \eqref{eq:Edot}, this immediately implies
\begin{align}\label{eq:ddt_E2D}
  \ddt(\E D^2)\leq 0\qquad\text{for }\; t\geq T_g,
\end{align}
so that
$\E D^2\ll 1$ and hence also graph structure and the small slope regime are preserved.

\subsubsection{Relaxation rates via Nash and \texorpdfstring{$L^1$}{} control}\label{sss:rr}
As far as the relaxation in time, the core idea is to use a Nash-type estimate controlling $\E$ in terms of the $L^1$-distance $\V$ and the dissipation $D$:
\begin{align}\label{eq:nash_int}
\E\lesssim \V^{\frac{6}{7}}D^{\frac{4}{7}},
\end{align}
which is not hard to show.
If $\V$ remains bounded, then this algebraic relation combined with the differential equation~\eqref{eq:Edot} yields a differential inequality for the excess energy, which implies \eqref{eq:E_decay}. Hence our main mathematical contribution is a duality argument that shows that $\V$ does
not increase too much:
\begin{align}\label{eq:V_bound_int}
\V\lesssim \V_0+\E_0^{\frac 32}.
\end{align}

One can view this result in the following way: Although the evolution is \emph{not initially in a perturbative regime} and thus behaves genuinely nonlinearly, the gradient flow structure imbues the problem with an inherent relaxation towards the linear regime. We show with the duality argument that one can leverage this relaxation
to capitalize on the linearization \emph{even in the initial nonlinear stage}---essentially because the evolution \emph{does not move away from the linear regime that much}.

\medskip

In both the initial layer and the small slope regime, we employ a duality argument inspired by \cite{NV} and used previously in the $L^1$-method developed and employed in \cite{OSW,BSW}. The earliest use of this duality method may be the adjoint method of Evans used by Evans, Tran, and others in the context of Hamilton-Jacobi equations; see \cite{E} and the citing references.

The duality argument is a nonlinear generalization of a duality argument for the
linearization \eqref{eq:linearization}, which we will explain in the remainder of this subsection. The main
task for the duality arguments of our paper will hence be to estimate the linearization errors in the right way.

But for now, let us look at the linear case. To see that \eqref{eq:V_bound_int} is true for the linear problem in any dimension, i.e., that the $L^1$-norm of $h$ is controlled in terms of the initial data, it is enough to use
\begin{align*}
 \norm{h}_1&\le \norm{G}_1\norm{h_0}_1
\end{align*}
in combination with the fact that $G$ is bounded in $L^1$ uniformly in $t$. This in turn follows from \eqref{eq:profile} and the $L^1$ control of the mask $\tilde G$,
which is smooth and
decays with rate
\begin{align}
\abs{x}^{-d-1} \quad\text{for large $x$.}\label{Gcay}
\end{align}
The decay can be derived via integration by parts as in
\begin{align}
 (2\pi)^{\frac d2}\abs{x}^{d+1}\tilde G(x)&=\int_{\R^d}\abs{x}^{d+1}e^{ix\cdot k}e^{-2\abs{k}^3}\dk\label{eq:G1}\\
 &=-6i\int_{\R^d}\abs{x}^{d-1}e^{ix\cdot k}x\cdot k\abs{k}e^{-2\abs{k}^3}\dk,
\end{align}
repeating $d$ more times and observing that $\abs{k}^3$ is $(d+1)$-times weakly differentiable.

This approach is not well-suited for generalization to nonlinear equations. Hence we turn instead to the dual characterization of the $L^1$ norm via
\begin{align*}
\norm{h(T)}_1=\int_{\R^d} \abs{h(T)}\dx=\sup_{\psi\in L^\infty(\R^d),\norm{\psi}_\infty\le 1}\int_{\R^d}h(T)\psi\dx
\end{align*}
and the dual or adjoint equation with terminal data $\psi$:
\begin{align}
\partial_t {\uu}+2\abs{\nabla}\Delta {\uu}&=0 \quad \text{on }[0,T)\times \R^d,\label{eq:dual_lin1}\\
{\uu}(T)&=\psi \quad \text{on } \R^d,\notag
\end{align}
for which solutions exist, which can be seen by applying the kernel $G$ backwards in time. For reference below, the property $\norm{G}_1<\infty$ yields
\begin{align}
\norm{{\uu(t=0)}}_\infty\ls \norm{\psi}_\infty.\label{eq:zeta_psi}
\end{align}

The idea is now to introduce the harmonic extension $\ub$  of  ${\uu}$ to $\R^d\times \set{z>0}$, which satisfies
\begin{alignat}{2}
-\partial_t{\ub}+2\partial_z\Delta {\ub}&=0& \quad &\text{on } (0,T)\times \R^d,\label{eq:uv1}\\
-\belta {\ub}&=0 & \quad &\text{on } (0,T)\times \R^d\times \set{z>0},\label{eq:uv2}\\
{\ub}(T)&=\psi& \quad &\text{on } \R^d,\label{eq:dual_lin2}
\end{alignat}
and analogously for the harmonic extension $\hb$ of the function $\hh$ satisfying \eqref{eq:linearization}.

Using \eqref{eq:uv1} and the corresponding equation for $\hb$,  we calculate
\begin{align}
\ddt\int_{\R^d} {\uu} h\dx&=\ddt\int_{\R^d} \ub \hb\dx = 2\int_{\R^d}\bra{\partial_z\Delta \ub}\hb-\ub(\partial_z\Delta \hb)\dx\nonumber\\
&= 2\int_{\R^d}\partial_z \ub\Delta \hb-\ub\partial_z\Delta \hb\dx\nonumber\\
\overset{\eqref{eq:uv2}}&{=} -2\int_{\bra{z>0}}\babla \ub\cdot\babla\Delta \hb-\babla \ub\cdot \babla\Delta \hb\dx\nonumber\\
&=0,\label{WHY}
\end{align}
and deduce
\begin{align*}
\int_{\R^d}h(T)\psi\dx=\int_{\R^d}h_0{\uu}(0)\dx
\overset{\eqref{eq:zeta_psi}}{\ls} \norm{h_0}_1\norm{\psi}_\infty.
\end{align*}
Taking the supremum over all $\psi$ with $\norm{\psi}_\infty\le 1$ gives $\norm{h}_1\ls \norm{h_0}_1$. Applying this argument to a nonlinear perturbation of \eqref{eq:linearization} leads to a nontrivial right-hand side in \eqref{WHY}, for which suitable estimates are required. The advantage of the method is that it isolates and quantifies the nonlinear behavior, since the linear contributions cancel. This is the approach that we will use to establish $L^1$ control.

\subsection{Notation and organization}\label{subsec:notation}

\begin{notation}[Time-dependent functionals]
We explicitly denote the $t$-dependence of the quantities $\Gamma,\, \E,\, D,\, \V$ only when we occasionally want to draw attention to it.
We denote the initial values with index $0$:
  \[\Gamma_0,\quad \E_0,\quad D_0,\quad \V_0. \]
\end{notation}

\begin{notation}[Gradients and such]\label{not:normal_bold}   We use regular and boldface font to distinguish between $d$-dimensional and $(d+1)$-dimensional objects. We use $\dx$ to note integration with respect to the $d$-dimensional Lebesgue measure over $x\in \R^d$ and $\nabla$, $\Delta$ to denote the gradient and Laplacian with respect to $x$; for the $(d+1)$-dimensional measure, gradient, and Laplacian, we use $\bdx$, $\babla$, and $\belta$:
\begin{align*}
\bdx=\dx\,dz,\qquad \;\babla f=\begin{pmatrix}\nabla f\\ \partial_z f\end{pmatrix},\qquad \belta:=\Delta+\partial_{zz}.
\end{align*}
We refer to $x$ as the tangential variable.
\end{notation}
\begin{notation}[Norms] Our convention is to denote norms on the interface explicitly and suppress the domain on $\R^d$, e.g.,
\begin{align*}
 \norm{g}_{L^p(\Gamma)} , \qquad \norm{g}_{p}=\norm{g}_{L^p(\R^d)},
\end{align*}
except when $\R^d$ is made explicit for emphasis.

\end{notation}

\begin{notation}[Jump]\label{not:jump}
For a quantity $q:\R^d\to \R$ that is defined on both $\Omega_+$ and $\Omega_-$ and has a well-defined trace $q_+$ and $q_-$ on $\Gamma$ from each side, respectively, we denote the jump by
\begin{align*}
	[q]:=q_+-q_-.
\end{align*}
\end{notation}
\begin{notation}[Minimum]\label{not:min}
We occasionally use the notation
\[A\wedge B:=\min\{A,B\}.\]
\end{notation}
\begin{notation}[Order]\label{not:1}
We use the notation

	\begin{align*} A \lesssim B \end{align*}
if $B\geq 0$ and there exists a universal constant $C\in(0,\infty)$ depending at most on the dimension $d$, such that
$A \leq CB$. If $A \lesssim B$ and $B \lesssim A$ we write $A \sim B$.

We say
\begin{align*} A \lesssim B \quad\text{implies}\quad E\lesssim F\end{align*}
if for every $C_1<\infty$ there exists $C_2<\infty$ such that
\begin{align*} A \leq C_1\, B \quad\text{implies}\quad E\leq C_2\, F,\end{align*}
and analogously for statements involving $A\sim B$ or $E\sim F$.
\end{notation}
\begin{notation}
  When we say let $\V$ and $\E$ be finite, we mean that there exists a set $\Omega_+$ of locally finite perimeter with reduced boundary $\Gamma$ and that the associated excess mass and excess energy are finite.
\end{notation}
\paragraph{Organization.}
The rest of the paper is organized as follows. Section~\ref{S:main} announces the three central propositions (which will be proved in Sections~\ref{sec:initial}, \ref{S:graph}, and \ref{S:relax2}) and establishes the main theorem based on these results. Section~\ref{sec:initial} establishes control of the excess mass in the initial layer. Section~\ref{S:graph} proves that the graph regime and Lipschitz control are reached by time $T_g$. Finally, Section~\ref{S:relax2} establishes control and decay for later times $t>T_g$.
A few auxiliary results are gathered in the appendix.

\section{Central propositions and proof of main theorem}\label{S:main}

As described in Subsection~\ref{subsec:method}, in the initial layer we will use merely $\E\leq \E_0$ but need an argument to control $\V$; the result is recorded in Proposition~\ref{prop:1}.
\begin{proposition}\label{prop:1}
Let $d=2$ or $d=1$. Within the setting of Theorem~\ref{thm:main} and for any $T\lesssim \E_0^{\frac 3d}$, there holds
  \begin{align}
    \sup_{0\leq t\leq T}\V(t)\lesssim \V_0+\E_0^{\frac{d+1}{d}}.\label{eq:V_control_init}
  \end{align}
\end{proposition}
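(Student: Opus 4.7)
The plan is to adapt the duality strategy outlined in Subsubsection~\ref{sss:rr} to the nonlinear setting with possibly non-graph initial interfaces. Since $\chi$ lives on $\R^{d+1}$ while the natural dual variables live on $\R^d$, I first use Cavalieri to split $\V=\|h_{\text{net}}\|_1+(\V-\|h_{\text{net}}\|_1)$, where $h_{\text{net}}(x):=\int\chi(x,z)\,dz$ is the net column height; the remainder measures the non-graph ``island defect'' and is bounded by $\E^{(d+1)/d}$ via a standard isoperimetric estimate (sum of spherical island radii $R_i$ satisfies $\sum R_i^{d+1}\lesssim(\sum R_i^d)^{(d+1)/d}$). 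Since $\E\leq\E_0$, this term is already $\lesssim\E_0^{(d+1)/d}$, and it suffices to control $\|h_{\text{net}}(T)\|_1$ by $L^1$--$L^\infty$ duality against $\psi\in L^\infty(\R^d)$ with $\|\psi\|_\infty\leq 1$.

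For such $\psi$ I build a dual test function $\varphi$ on $\R^{d+1}\times[0,T]$ by letting $u(x,t)$ solve the backward linearized equation $\partial_t u+2|\nabla|\Delta u=0$ on $\R^d\times[0,T]$ with terminal data $u(\cdot,T)=\psi$, and setting $\varphi=\bar u$ to be its even harmonic extension across $\{z=0\}$. Inserting $\varphi$ into the weak form~\eqref{eq:weak_dyn} yields
\[\int\chi(T)\varphi(T)\bdx=\int\chi_0\varphi(0)\bdx+\int_0^T\!\!\int\chi\,\partial_t\varphi\bdx\,dt-\int_0^T\!\!\int\babla f\cdot\babla\varphi\bdx\,dt.\]
Mirroring the linear computation~\eqref{WHY}, after integrating the gradient term by parts---using harmonicity of $\varphi$ off $\{z=0\}$ and of $f$ off $\Gamma$---the two bulk terms cancel up to a nonlinear error. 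The initial boundary term is controlled by $\|u(0)\|_\infty\lesssim\|\psi\|_\infty\leq 1$ (which comes from $\|G(t)\|_1\lesssim 1$, uniformly in $t$), so $|\int\chi_0\varphi(0)\bdx|\lesssim\V_0$. The small discrepancy between $\int\chi(T)\varphi(T)\bdx$ and $\int h_{\text{net}}(T)\psi\,dx$ is controlled by pairing $z$-moments of $\chi$ with $z$-derivatives of $\bar\psi$ and again absorbed into the $\E_0^{(d+1)/d}$ budget.

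The pure nonlinear error, after the linear cancellation, is a pairing between derivatives of $\bar u$ (restricted to $\Gamma$ and $\{z=0\}$) and geometric differences such as $H-\Delta h$ and $\Gamma-\{z=0\}$. By H\"older's inequality, this error is controlled using three ingredients: the $L^p$-bounds on derivatives of $u$ arising from the kernel estimates~\eqref{eq:h_decay_int} (which gain a factor of $T^{-1/3}$ per tangential derivative), the integrated dissipation $\int_0^T\!D\,dt\leq\E_0$ for $L^2$ control of $\babla f$, and the area bound $\E\leq\E_0$ for $BV$ control of $\Gamma$. The main obstacle will be choosing the $L^p$-exponents and organizing the H\"older pairings so that all three ingredients balance precisely at the target $\E_0^{(d+1)/d}$ under the scaling constraint $T\lesssim\E_0^{3/d}$; dimensional analysis together with the scaling~\eqref{eq:profile} of $G$ pins down these exponents. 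Taking the supremum over $\psi$ and combining with the isoperimetric bound on the non-graph defect then yields~\eqref{eq:V_control_init}.
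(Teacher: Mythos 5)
Your overall strategy coincides with the paper's: represent the excess mass by duality, solve the backward linearized equation for $u$ with terminal data $\psi$, extend harmonically, test the weak formulation \eqref{eq:weak_dyn} with $\bar u$ so that the linear contributions cancel, and control the mismatch between $\V$ and the column quantity $\norm{h_{\mathrm{net}}}_1$ isoperimetrically (this is essentially Lemma~\ref{l:V_Vb} combined with Lemma~\ref{l:preproc_init} in the paper). The architecture is therefore right. However, you defer exactly the steps that carry the mathematical content, and in doing so you miss two ideas without which the argument does not close.

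First, the nonlinear errors are \emph{not} bounded by $\E_0^{\frac{d+1}{d}}$ alone. After the cancellation one is left with terms such as $\int\chi\,\tilde v\,\partial_z f\,\bdx$, which by Cauchy--Schwarz (using $|\chi|^2=|\chi|$) can only be bounded by $\norm{v}_\infty\V^{\frac12}D^{\frac12}$; see \eqref{eq:A1_est} and \eqref{eq:A3_est}. The factor $\V^{\frac12}$ is unavoidable, so after time integration the inequality has the self-referential form $\Vb(T)\le\V_0+C\V_T^{\frac12}\E_0^{\,\cdot}+C\E_0^{\frac{d+1}{d}}$ and must be closed by Young's inequality plus absorption of $\sup_{t\le T}\V$ into the left-hand side; your claim that the three ingredients you list (kernel bounds on $u$, $\int_0^TD\,\dt\le\E_0$, $\E\le\E_0$) "balance at the target $\E_0^{(d+1)/d}$" omits this buckling structure entirely. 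Second, your plan to control the discrepancy between testing against $\psi$ (constant in $z$) and against $\bar\psi$ (harmonic extension) by "pairing $z$-moments of $\chi$ with $z$-derivatives" cannot work globally: quantities like $\int_\Gamma |z|^2\dS$ are not controlled by $\V$ and $\E$. The paper first cuts off at height $R\sim\V^{\frac{1}{d+1}}$ (Step 1 of Lemma~\ref{l:V_Vb}, itself a separate ODE/isoperimetric argument) and only then uses $\int_\Gamma(|z|\wedge R)^2\dS\lesssim R\V+R^2\E$ from Lemma~\ref{l:EV_interpolation}; even so the outcome is $\frac12\V+C\E^{\frac{d+1}{d}}$, again requiring absorption rather than fitting into an "$\E_0^{(d+1)/d}$ budget." A further cutoff optimization at scale $(T-t)^{\frac13}$ is needed for the term involving $\partial_t\bar u-\partial_t\tilde u$. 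These are the places where the proof actually lives; they are not routine consequences of choosing H\"older exponents.
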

The next main ingredient in $d=2$ is to show that a graph structure is achieved once $\E\D^2$ is small enough.
\begin{proposition}\label{prop:2}
Let $d=2$. Let $\Gamma$ be the reduced boundary of a set $\Omega_+$ of locally finite perimeter and suppose finite excess energy $\E$ and dissipation $D$. There exists $\eps_1>0$ such that if $\E \D^2\leq \eps_1$, then there exists a Lipschitz function $h:\R^2\to\R$ such that
\begin{align}\label{propsoup}
\Gamma=\{(x,z)\colon x\in \R^2,\; z=h(x)\}\qquad\text{and}\qquad	\norm{\nabla h}_\infty\ls (\E \D^2)^{\frac 16}.
\end{align}
\end{proposition}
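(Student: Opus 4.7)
My plan is to reduce Proposition~\ref{prop:2} to Allard's $\varepsilon$-regularity theorem applied at a single, carefully chosen length scale. The two ingredients Allard requires are \emph{tilt-excess} and an $L^p$-control on the generalized mean curvature ($p>d=2$), and the content of the proposition is that $\E$ provides the first while $D$ provides the second, with the scale that balances them being exactly determined by $\E D^2$. Concretely, for a ball $B_r$ centered on $\Gamma$, write
\begin{align*}
\eta_{\mathrm{tilt}}^2\;\sim\;r^{-2}\int_{\Gamma\cap B_r}|n-e_z|^2\dS\;\lesssim\;r^{-2}\E,\qquad \eta_{\mathrm{curv}}^2\;\sim\;r^{1-2/p}\|H\|_{L^p(\Gamma\cap B_r)}.
\end{align*}
Picking $r\sim (\E/D)^{1/3}$ balances these against a common value of order $(\E D^2)^{1/6}$, and smallness of $\E D^2$ turns into the smallness Allard demands. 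The Lipschitz output of Allard is then exactly $\|\nabla h\|_\infty\lesssim (\E D^2)^{1/6}$ as stated in \eqref{propsoup}.

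The two ingredients I would establish as follows. For the curvature: since by \eqref{eq:weak_H} the generalized mean curvature is the trace of $f$ on $\Gamma$, and $\|\babla f\|_{L^2}^2=D$, I would combine the trace/Sobolev inequality of Meyers--Ziemer for measures supporting sufficient density (yielding $\|f\|_{L^q(\Gamma\cap B_r)}$ in terms of $\|\babla f\|_{L^2(B_r)}$) with Schätzle's result that provides the requisite local density and area bounds on $\Gamma$. In $d=2$ one obtains $\|H\|_{L^4(\Gamma\cap B_r)}\lesssim r^{?}D^{1/2}$, and hence $\eta_{\mathrm{curv}}^2\lesssim rD$. For the tilt: the identity $1-e_z\cdot n=\tfrac12|n-e_z|^2$ immediately gives $\int_{\Gamma\cap B_r}|n-e_z|^2\dS\leq 2\E$, so indeed $\eta_{\mathrm{tilt}}^2\leq 2\E/r^2$. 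With these in hand, Allard's regularity theorem produces on each ball $B_{r/2}$ a $C^{1,\alpha}$-graph (over the approximate tangent plane) whose Lipschitz constant is controlled by $\max(\eta_{\mathrm{tilt}},\eta_{\mathrm{curv}})\sim (\E D^2)^{1/6}$.

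The global graph structure with respect to the reference plane $\{z=0\}$ requires a second argument: Allard only provides local graphs over approximate tangent planes. I would first observe that Lipschitz control of slope $\ll 1$ implies that each local tangent plane is close to $\{z=0\}$, so each local graph can be reparametrized as a graph over $\{z=0\}$ with the same (essentially) Lipschitz constant. To pass from local to global I would exploit that, because $\V_0<\infty$ and $\E<\infty$, the set $\Omega_+$ must coincide with $\{z>0\}$ outside a large ball \emph{up to finite excess volume}, giving the graph structure at infinity and then using connectedness of the vertical-line preimages of $\Gamma$ under the projection $\pi(x,z)=x$: injectivity of $\pi|_\Gamma$ follows from a covering argument since the local Lipschitz bound is strictly $<1$ and $\R^d$ is simply connected. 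The main obstacle I anticipate is verifying the precise hypotheses of Meyers--Ziemer/Schätzle in the BV-varifold framework of Definition~\ref{def:weaksol}---in particular establishing the necessary volume density and monotonicity properties for $\Gamma$ to invoke the trace inequality with the scaling needed to get $p>d$---and then carefully tracking the scale-dependent constants so that the final interpolation cleanly produces the exponent $1/6$.
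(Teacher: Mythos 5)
Your strategy coincides with the paper's: apply Allard's regularity theorem at the single scale $\rho\sim(\E/D)^{1/3}$, feeding it a tilt excess of order $\rho^{-2}\E$ and the $L^4$ curvature bound $\norm{H}_{L^4(\Gamma)}\lesssim D^{1/2}$ obtained by combining the Meyers--Ziemer trace inequality with Sch\"atzle's monotonicity formula (this is exactly Lemma~\ref{l:H_trace}); balancing the two contributions produces the exponent $(\E D^2)^{1/6}$. Two points in your write-up need repair, however. First, Allard's theorem (Theorem~\ref{thm:allard}) has a third hypothesis besides tilt and curvature, namely the mass-ratio bound $\abs{\babla\one_{\Omega_+}}(\bB_\rho(\bx))\le 2(1-\alpha)\pi\rho^2$, which rules out multiple sheets and does \emph{not} follow from the tilt excess; the paper verifies it via the divergence-theorem identity $\int_{\Gamma\cap\{\abs{x-x_0}<\rho\}}e_z\cdot n\dS=\pi\rho^2$, which yields $\abs{\babla\one_{\Omega_+}}(\bB_\rho)\le\pi\rho^2+\E$ and is again compatible with the choice $\rho=(\E/D)^{1/3}$ once $\E D^2$ is small. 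Second, your local-to-global step invokes $\V_0<\infty$, which is not among the hypotheses of Proposition~\ref{prop:2} (only $\E,D<\infty$ are assumed), so the ``graph structure at infinity'' anchor is unavailable; the paper instead runs a clopen argument on the set $M$ of points $x\in\R^2$ whose vertical fiber meets $\Gamma$ exactly once: $M\neq\emptyset$ because $\E<\infty$ forces single-sheetedness off a set of finite measure, and the local graph property makes both $M$ and its complement open, whence $M=\R^2$ by connectedness. Your concern about local graphs being parametrized over approximate tangent planes rather than $\{z=0\}$ is moot in the formulation of Allard used here (Simon's Theorem 23.1), where the reference plane is fixed and the tilt excess is measured relative to it, so the Lipschitz graph comes out directly over $B_{\gamma\rho}(x_0)\subseteq\R^2$.
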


We will use Lemma~\ref{l:small} with an $\eps$ small enough so that Proposition~\ref{prop:2} identifies a time $T_g\lesssim\E_0^{3/2}$ such that graph structure is achieved.
In addition, as described in Subsection~\ref{subsec:method}, we will choose $\eps$ sufficiently small so that \eqref{eq:ddt_E2D} and the control from \eqref{propsoup} locks the dynamics within the small slope regime. The next and final proposition encapsulates relaxation to flat for graphs with small Lipschitz norm. We use this in Theorem~\ref{thm:main} in $d=2$ to establish relaxation once the evolution has entered the graph setting; in $d=1$, the proposition says that for initial data as given, the relaxation rates hold. We state it as a ``stand-alone result'' since it already gives a stronger relaxation result for the graph setting in $d=1$ than has previously been established (see Subsection~\ref{subsec:lit}).
\begin{proposition}\label{prop:3}
Let $d=2$ or $d=1$.
There exists $\eps_2>0$ with the following property. Consider the the MS dynamics under the hypothesis \textbf{(H)} in the graph setting, i.e., where $\Gamma_0$ is the graph of a function $h_0:\R^d\to\R$. If $\E_0^{3-d} D_0^d \leq \eps_2$ and $\norm{\nabla h_0}_{\infty}\leq 1$, then $\Gamma$ remains a graph satisfying  $\norm{\nabla h}_{\infty}\leq 1$   and
\begin{align}
\V&\ls \V_0+\E_0^{\frac {d+1}d},\label{st1}\\
\E&\ls \min\set{\E_0,\frac{\V_0^2+\E_0^\frac{2(d+1)}{d}}{t^{\frac {d+2}3}}}\label{st2}
\end{align}
hold for all future times. In addition there exists $T\sim\E_0^{\frac {d+1}{d}}$ such that
\begin{align}
D(t)&\ls \frac{\E\bra{\frac t2}}{t},\label{st3}\\
\norm{h}_\infty+t^\frac{1}{3}\norm{\nabla h}_\infty&\lesssim\frac{\V_0+\E_0^{\frac{d+1}d}}{t^{\frac d3}}      \label{st4}
\end{align}
holds for all $t\ge T$.
\end{proposition}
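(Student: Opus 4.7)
The plan is to combine four ingredients: (i) propagation of the dimensionless smallness $\E^{3-d}D^{d}$ via a dissipation-of-dissipation estimate, which locks the evolution into the small-slope graph regime; (ii) a nonlinear adjoint/duality argument generalizing the linear cancellation of Subsection~\ref{sss:rr} to bound $\V$ by the right-hand side of \eqref{st1}; (iii) a scale-invariant Nash-type interpolation that converts the $\V$-bound into the algebraic decay \eqref{st2} of $\E$; and (iv) Gagliardo-Nirenberg-type interpolation from $L^1$ to $L^\infty$ for the pointwise estimates \eqref{st3}--\eqref{st4}.

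For (i), in the small-slope graph setting I would derive a dissipation-of-dissipation bound $\ddt D\lesssim D^{(6-d)/(3-d)}$ (i.e., $D^{4}$ for $d=2$ and $D^{5/2}$ for $d=1$) by differentiating $D=\int|\babla f|^2\bdx$ along the flow and controlling the resulting boundary integrals through an $L^p$-estimate on $\nabla^2 h$ of the type \eqref{ao12}. Coupled with $-\ddt\E\ge D$, this yields $\ddt(\E^{3-d}D^{d})\le 0$ as soon as the product is small, so that $\E^{3-d}D^{d}\le\eps_2$ is propagated for all time. Via Proposition~\ref{prop:2} in $d=2$ (or the analogous \cite{COW} bound in $d=1$), this smallness translates into $\norm{\nabla h}_\infty\ll 1$, which both preserves the graph structure and feeds back into the error estimates of the subsequent steps.

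For (ii)---which I expect to be the main obstacle---I would imitate the linear duality of Subsection~\ref{sss:rr}: write $\V(T)=\norm{h(T)}_{1}=\sup_{\norm{\psi}_\infty\le 1}\int h(T)\psi\,\dx$, solve the adjoint $\partial_t\uu+2\abs{\nabla}\Delta \uu=0$ backwards from $\uu(T)=\psi$, and harmonically extend to $\ub$ on $\set{z>0}$. The cancellation \eqref{WHY} now fails, producing a nonlinear remainder consisting of the curvature-linearization error ($H-\Delta h$) together with geometric errors from identifying $\Gamma$ with $\set{z=0}$. These I would bound by combining the small-slope control from (i), the dispersive estimate $\norm{\nabla\uu}_\infty\lesssim\norm{\psi}_\infty(T-t)^{-1/3}$ for the adjoint kernel (an adaptation of Subsection~\ref{subsec:optimality}), and the dissipation integral $\int_0^T D\,\dt\le\E_0$; time-integration then delivers $\V(T)\le \V_0+C\E_0^{(d+1)/d}$.

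For steps (iii)--(iv), the scale-invariant Nash interpolation $\E\lesssim \V^{6/(d+5)}D^{(d+2)/(d+5)}$ combines with the $\V$-bound from (ii) and $-\ddt\E\ge D$ to give the ODI $-\ddt\E\gtrsim V_*^{-6/(d+2)}\E^{(d+5)/(d+2)}$ with $V_*=\V_0+\E_0^{(d+1)/d}$, which integrates to \eqref{st2} (the bound $\E\le\E_0$ being retained throughout). For \eqref{st3}, the identity $\int_{t/2}^t D\,\dtau\le \E(t/2)$ combined with the ODE estimate $\ddt D\lesssim D^{(6-d)/(3-d)}$ from (i) prevents $D$ from concentrating near $t$ and delivers $D(t)\lesssim \E(t/2)/t$. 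Finally \eqref{st4} follows from Gagliardo-Nirenberg interpolation between $\norm{h}_1\lesssim V_*$ and the higher-derivative bounds encoded in (i), optimized to reproduce exactly the $t^{-d/3}$ and $t^{-(d+1)/3}$ rates predicted by the linearization. The crux of step (ii) is that the initial data are not perturbative, so every error term in the dual calculation is of unit size pointwise at $t=0$; closing the estimate requires the dispersive $(T-t)^{-1/3}$ decay of the adjoint kernel, the small-slope regularity from (i), and the integrability of the dissipation to conspire in keeping the nonlinear remainder below the target $\E_0^{(d+1)/d}$.
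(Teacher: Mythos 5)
Your four-ingredient plan mirrors the paper's architecture almost exactly: propagation of $\E^{3-d}D^d$ via the dissipation-of-dissipation estimate (Lemmas~\ref{l:Ddot} and~\ref{l:ED2}) combined with Corollary~\ref{cor:Lip} to lock in the Lipschitz bound; a duality argument for $\V$ (Lemma~\ref{l:preproc} and Proposition~\ref{prop:duality_1}); the Nash-type interpolation of Proposition~\ref{prop:EED} with the ODE Lemma~\ref{l:E_decay} for \eqref{st2}; and Lemma~\ref{l:D_decay} plus Gagliardo--Nirenberg for \eqref{st3}--\eqref{st4}.

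There is, however, one genuine gap in your step (ii) as written. You propose to close the duality estimate for $\V(T)$ using only the dispersive decay $\norm{\nabla u}_\infty\ls(T-t)^{-1/3}$ of the adjoint kernel together with the dissipation integral $\int_0^T D\,\dt\le\E_0$. This combination produces error terms of the type
\begin{align}
\int_0^T\frac{\V^{1/2}D^{1/2}}{(T-t)^{1/3}}\,\dt\ls \V_T^{1/2}\bigl(T^{1/3}\E_0\bigr)^{1/2},
\end{align}
which grow with $T$ and are only admissible on the initial layer $T\ls\E_0^{3/d}$ --- this is precisely the content of Proposition~\ref{prop:1}, not of \eqref{st1}. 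To obtain a bound uniform in $T$, the paper must instead feed the \emph{pointwise} dissipation decay $D(t)\ls\min\{\E_0/t,\,\V_T^2/t^{(d+5)/3}\}$ into the duality error terms (see the hypothesis \eqref{eq:D_decay_duality_1} of Proposition~\ref{prop:duality_1} and the interpolated choices of $\alpha$ there). But that decay is itself derived from the energy decay, which requires the very $\V_T$-bound you are trying to prove. The circle is broken by a continuity (buckling) argument: one defines $T_3$ as the maximal time up to which $\V\le\tilde C(\V_0+\E_0^{(d+1)/d})$ holds, runs Lemma~\ref{l:E_decay} and Lemma~\ref{l:D_decay} on $[0,T_3]$ to obtain the dissipation decay in terms of $\V_{T_3}$, inserts this into the duality argument (together with Proposition~\ref{prop:1} on the initial layer $t\le T_{\textrm{diss}}$), and absorbs the sublinear powers of $\V_{T_3}$ to conclude $T_3=\infty$. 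Your proposal lists all the needed estimates but presents step (ii) as logically prior to step (iii); making the bootstrap explicit is essential for the argument to close.
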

\begin{proof}[Proof of Theorem~\ref{thm:main}]
We fix $\eps:=\min\{\eps_1,\eps_2\}$ for the constants $\eps_1$ and $\eps_2$ from Propositions~\ref{prop:2} and \ref{prop:3}. If necessary we reduce $\eps$ additionally so that for the implicit constant on the right-hand side of \eqref{propsoup}, there holds
\begin{align*}
  \norm{\nabla h}_\infty\leq C (\E D^2)^{\frac 16}\leq 1.
\end{align*}
For this $\eps>0$, we apply Lemma~\ref{l:small} to define $T_g\lesssim \E_0^{3/2}$ so that
\begin{align*}
  \E D^2(T_g)\leq \eps.
\end{align*}
On $[0,T_g]$, we use Proposition~\ref{prop:1} to control
\begin{align}
    \sup_{0\leq t\leq T_g}\V(t)\lesssim \V_0+\E_0^{\frac 32}.\label{VgTg}
  \end{align}
By choice of $\eps$,
Proposition~\ref{prop:2} yields that the interface at time $T_g$ is a graph, and for $t\geq T_g$ Proposition~\ref{prop:3} yields that the excess mass is bounded by
\begin{align}\notag
\V\ls \V(T_g)+\E(T_g)^{\frac 32}\overset{\eqref{eq:E_bound_simple}, \eqref{VgTg}   }\lesssim\V_0+\E_0^{\frac 32}\qquad \text{for $t\geq T_g$},
\end{align}
and the energy decays according to
\begin{align}\label{latetime}
\E&\ls \min\set{\E_0,\frac{\V(T_g)^2+\E(T_g)^{3}}{(t-T_g)^{\frac {4}3}}}\overset{\eqref{eq:E_bound_simple}, \eqref{VgTg}   }\lesssim\min\set{\E_0,\frac{\V_0^2+\E_0^{3}}{(t-T_g)^{\frac {4}3}}}
\quad \text{for $t\geq T_g$}.
\end{align}
In view of
\[\frac{\E_0^{3}}{t^{\frac{4}{3}}}\gtrsim \E_0\qquad\text{for }t\lesssim \E_0^{\frac 32},\]
\eqref{eq:E_bound_simple} and~\eqref{latetime} combine to give~\eqref{eq:E_decay}.

Possibly increasing $T_g$ by $T$ from Proposition~\ref{prop:3} and again arguing as for~\eqref{VgTg} to control $\V$ up to this point, we use \eqref{st3}-\eqref{st4} to  obtain~\eqref{eq:D_decay}-\eqref{eq:h_decay}.
\end{proof}

\section{Control of the initial layer}\label{sec:initial}

This section is devoted to the proof of Proposition~\ref{prop:1}. Throughout the section we will let $d\in\{1,2\}$ and
$\chi$ be as defined in \eqref{eq:vfinite}.

We represent $\V$ via duality in the form
\begin{align}\label{eq:V_dual_smooth}
	\V=\sup_{\psi\in L^\infty(\R^{d+1}), \norm{\psi}_{\infty}\le 1}\int_{\R^{d+1}}\psi \chi \bdx=\sup_{\psi\in C^\infty_c(\R^{d+1}), \norm{\psi}_{\infty}\le 1}\int_{\R^{d+1}}\psi \chi \bdx,
\end{align}
because it will be convenient below to work with smooth and compactly supported test functions. (The restriction can be justified by passing to the limit in a standard cut-off and mollification argument.)
Rather than work with test functions on $\R^{d+1}$, it will be convenient to argue for \eqref{eq:V_control_init} by way of the following modified version of the excess mass
\begin{align}\label{eq:Vb_def}
\Vb\coloneqq \sup_{\psi\in L^\infty(\R^d), \norm{\psi}_{\infty}\le 1}\int_{\R^{d+1}}\bar \psi \chi \bdx =\sup_{\psi\in C^\infty_c(\R^d), \norm{\psi}_{\infty}\le 1}\int_{\R^{d+1}}\bar \psi \chi \bdx,
\end{align}
where $\bar \psi$ is the harmonic extension of $\psi$ (and the second equality is justified as in \eqref{eq:V_dual_smooth}).
Control of $\Vb$ will deliver control of $\V$ using
\begin{lemma}\label{l:V_Vb}
If $\V,\E<\infty$, then
	\begin{align}\label{eq:V_Vb}
		\V\ls \Vb+\E^{\frac{d+1}{d}}.
	\end{align}
\end{lemma}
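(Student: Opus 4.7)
My plan is a duality argument. By density, \eqref{eq:V_dual_smooth} lets us restrict to $\psi\in C_c^\infty(\R^{d+1})$ with $\norm{\psi}_\infty\le 1$; I aim to show $\int\psi\chi\,\bdx\le\Vb+C\E^{(d+1)/d}$. Setting $\phi:=\psi|_{z=0}$ and letting $\bar\phi$ denote its symmetric harmonic extension to $\R^{d+1}\setminus\{z=0\}$, the maximum principle gives $\norm{\bar\phi}_\infty\le 1$, so that $\int\bar\phi\chi\,\bdx\le\Vb$. The task thus reduces to bounding the residual $R:=\int(\psi-\bar\phi)\chi\,\bdx$ by $\E^{(d+1)/d}$.

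The first ingredient is an isoperimetric bound on the ``bulk'' pieces of $E:=\Omega_+\triangle\{z>0\}$. Decomposing $E$ into connected components, I would isolate the ``enclosed'' components $K$ whose essential boundary is contained in $\Gamma\cap\{z\ne 0\}$. For these, the divergence theorem applied to the constant field $e_z$ gives $\int_{\partial K}e_z\cdot n\,\dS=0$, so $\Haus^d(\partial K)=\int_{\partial K}(1-e_z\cdot n)\,\dS=\E_K$, the component's contribution to $\E$. The isoperimetric inequality $|K|^{d/(d+1)}\lesssim\Haus^d(\partial K)$ then yields $|K|\lesssim\E_K^{(d+1)/d}$; summing with the power subadditivity $\sum_i a_i^{(d+1)/d}\le(\sum_i a_i)^{(d+1)/d}$ (for $a_i\ge 0$) bounds the total enclosed volume by $C\E^{(d+1)/d}$. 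Since $\chi$ is constant on each such $K$ and $|\psi-\bar\phi|\le 2$ uniformly, the contribution of enclosed components to $R$ is bounded by twice this total volume, hence $\lesssim\E^{(d+1)/d}$.

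For the remaining components $K$ of $E$, which touch $\{z=0\}$, $|K|$ need not be controlled by $\E$ (e.g., in the graph case, $|K|\approx\V$), so I must exploit $(\psi-\bar\phi)|_{z=0}=0$. The primitive $\Psi(x,z):=\int_0^z(\psi-\bar\phi)(x,s)\,ds$ vanishes on $\{z=0\}$ and satisfies $|\Psi|\le 2|z|$. Integration by parts in $z$, using $\partial_z\chi=n_z\Haus^d\llcorner\Gamma-\Haus^d\llcorner\{z=0\}$ together with $\Psi|_{z=0}=0$, reduces the touching-component contribution to $-\int_{\Gamma_\text{touch}}\Psi\,n_z\,\dS$. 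The main obstacle is then to establish $\int_{\Gamma_\text{touch}}|z||n_z|\,\dS\lesssim\E^{(d+1)/d}$: the naive estimate from $|\Psi|\le 2|z|$ fails in general (a far-off spherical component can have $\int_\Gamma|z||n_z|\,\dS\gg\E^{(d+1)/d}$), so the refined argument must couple each touching component's height excursion to the $\E$-content of the corresponding ``tube''---likely via a slicing/coarea argument combined with a relative isoperimetric inequality that accounts for the $\{z=0\}$-portion of $\partial K$.
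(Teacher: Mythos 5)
Your argument has a genuine gap precisely at the step you flag as the "main obstacle," and the reduction you set up there cannot be repaired in the form you propose. With only $|\Psi|\le 2|z|$, bounding the touching-component contribution requires $\int_\Gamma |z|\,|n_z|\,\dS\lesssim \E^{\frac{d+1}{d}}$ (or at best $\le\tfrac12\V+C\E^{\frac{d+1}{d}}$ after absorption). But since $|n_z|\ge e_z\cdot n$, the divergence-theorem identity $\int_\Gamma |z|\,e_z\cdot n\,\dS=\V$ (this is exactly the computation in the paper's Lemma~\ref{l:EV_interpolation}) gives $\int_\Gamma|z|\,|n_z|\,\dS\ge \V$ with constant $1$, while your residual carries a prefactor $2$; for a smooth graph $\Omega_+=\{z>h(x)\}$ of small slope the quantity equals $\V$ on the nose, and $\V$ is certainly not controlled by $\E^{\frac{d+1}{d}}$. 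The deeper issue is that for $\psi\approx\sgn\chi$ your residual $R=\int(\psi-\bar\phi)\chi\,\bdx$ is essentially the whole difference $\V-\Vb$ that the lemma is about, and no pointwise bound on $\psi-\bar\phi$ can do better than $O(1)$ for $z\ne 0$ because $\psi$ may oscillate arbitrarily fast in $z$. So the content of the lemma is not reachable from $|\Psi|\lesssim|z|$, and the sketched fix (slicing plus a relative isoperimetric inequality) is not the mechanism that closes it. (Your treatment of the enclosed components via the divergence theorem and the isoperimetric inequality is correct and is in the same spirit as the paper's Step~1 and the inequality \eqref{eq:g_h}.)

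For comparison, the paper splits the passage from $(d+1)$-dimensional test functions to harmonic extensions of boundary data into two comparisons with different mechanisms, neither of which is a pointwise estimate on the test function. First it compares $\V$ with the intermediate quantity $\Vt$ built from \emph{constant-in-$z$} extensions $\tilde\psi$ of $\psi\in L^\infty(\R^d)$: writing $g=\int_\R|\chi|\dz$ and $h=\int_\R\chi\dz$, one has $\V-\Vt=\int_{\R^d}(g-|h|)\dx$, supported on the overhang set $F=\{|h|<g\}$ where a vertical line meets $\Gamma$ at least three times; there $|F|\lesssim\E$, and an isoperimetric inequality for the region between the graphs of $|h|$ and $g$ yields $\V-\Vt\lesssim\E^{\frac{d+1}{d}}$. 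Second, the discrepancy between the constant and the harmonic extension is a purely \emph{horizontal} smoothing by the Poisson kernel; shifting it onto $\chi_R$ produces $\nabla\chi_R=n'\,|\babla\chi|\llcorner(\R^d\times\{|z|\le R\})$ with the horizontal normal component $n'$, for which $\int_\Gamma|n'|^2\dS\lesssim\E$. Combined with $\int_\Gamma(|z|\wedge R)^2\dS\lesssim R\V+R^2\E$, the cutoff $R\sim\V^{\frac{1}{d+1}}$ from Step~1, and Young's inequality, this gives $\tfrac12\V+C\E^{\frac{d+1}{d}}$ and the claim follows by absorption. The gain of $|n'|$ (small where the interface is nearly flat) over your $|n_z|$ (of order one there) is exactly what your computation is missing.
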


As explained in Section~\ref{subsec:method}, the idea to bound $\Vb$ is to introduce the adjoint harmonic extension $\ub$ of $\psi$ using \eqref{eq:uv1}-\eqref{eq:dual_lin2} and
to estimate  $\ddt \int_{\R^{d+1}}\chi\bar u\bdx$. In the following auxiliary lemma we split the error and announce the corresponding estimates. We denote by $\tilde u$ and $\tilde v$ the \emph{constant extensions in the $z$-direction} of $u$ and
$v=-\abs{\nabla}u$
to $\R^{d+1}$.
\begin{lemma}[Splitting the error and preprocessing]\label{l:preproc_init}
Let $T>0$, $\psi\in C^\infty_c(\R^d)$ with $\norm{\psi}_\infty\le 1$, and let $\bar{u}$ satisfy \eqref{eq:uv1}--\eqref{eq:dual_lin2}, extended by even reflection to $\R^d\times \{z<0\}$. There holds
\begin{align}
\lefteqn{\bra{\int_{\R^{d+1}}\chi \bar u \bdx}(T)-\bra{\int_{\R^{d+1}}\chi \bar u \bdx}(0)}\\
&\!\!\!\!=\int_0^T \left(2\int_{\R^{d+1}} \chi\tilde v \partial_z f\;\bdx-2\int_{\Gamma}(1-e_z\cdot n)(\babla\tilde v\cdot n)\dS+\int_{\R^{d+1}}\chi(\partial_t\bar u-\partial_t \tilde u)\bdx\right)\dt.\label{eq:ddt}
\end{align}
Moreover, the error terms can be estimated for almost every $t$ as
\begin{align}
  A_1&\coloneqq \abs{2\int_{\R^{d+1}} \chi\;\tilde v\, \partial_z f\,\bdx}\ls \frac{\V^{\frac 12}D^{\frac 12}}{(T-t)^{\frac 13}},\label{eq:A1_est}\\
 A_2&\coloneqq \abs{2\int_{\Gamma}(1-e_z\cdot n)(\babla\tilde v\cdot n)\dS}\ls  \frac{\E}{(T-t)^{\frac 23}}, \label{eq:A2_est}\\
A_3&\coloneqq \abs{\int_{\R^{d+1}}\chi\;(\partial_t\bar u-\partial_t \tilde u)\,\bdx}\ls   \frac{\V^{\frac 12}\E^{\frac 12}}{(T-t)^{\frac 56}}+\frac{\E}{(T-t)^{\frac 23}}.\label{eq:A3_est}
\end{align}
\end{lemma}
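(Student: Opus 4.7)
The plan is to derive the identity \eqref{eq:ddt} by a sequence of integration-by-parts steps and then to bound each of $A_1,A_2,A_3$ using the scaling of the dual kernel $G$.

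To prove \eqref{eq:ddt}, I would test the weak formulation \eqref{eq:weak_dyn} with $\varphi = \bar u$ (justified by a standard cutoff-mollification since $\bar u\in\dot H^1(\R^{d+1})$ for each $t$) to obtain
\[
\bra{\int \chi \bar u\,\bdx}\bigg|_0^T = \int_0^T\!\bra{\int \chi\,\partial_t \bar u\,\bdx - \int \babla f\cdot \babla \bar u\,\bdx}\dt.
\]
Because $\bar u$ is the Poisson extension of $u$ to $\{z>0\}$ extended evenly to $\{z<0\}$, an elementary distributional computation gives $\belta \bar u = 2v\,\Haus^d|_{\{z=0\}}$ with $v = \partial_z \bar u|_{z=0^+} = -|\nabla|u$; integrating by parts yields $\int \babla f\cdot \babla \bar u\,\bdx = -2\int_{\R^d} f|_{z=0}\,v\,dx$. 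Using the distributional identity $\partial_z \chi = (e_z\cdot n)\Haus^d|_\Gamma - \Haus^d|_{\{z=0\}}$ and integration by parts in $z$,
\[
\int_{\R^d} f|_{z=0}\,v\,dx = \int \chi\,\tilde v\,\partial_z f\,\bdx + \int_\Gamma (e_z\cdot n)\,\tilde v f\,\dS.
\]
Splitting $\partial_t \bar u = \partial_t \tilde u + (\partial_t \bar u - \partial_t \tilde u)$ and using that the adjoint equation gives $\partial_t u = 2\Delta v$, hence $\partial_t \tilde u = 2\Delta \tilde v$, distributional integration by parts produces $\int \chi\,\partial_t \tilde u\,\bdx = -2\int_\Gamma n\cdot \babla \tilde v\,\dS$. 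The $(1-e_z\cdot n)$ factor in \eqref{eq:ddt} then emerges from the cancellation
\[
\int_\Gamma (n\cdot e_z)(n\cdot \babla \tilde v)\,\dS = \int_\Gamma (n\cdot e_z)\,\tilde v f\,\dS,
\]
obtained by testing the weak curvature equation \eqref{eq:weak_H} with $\xi = \tilde v\,e_z$: the integrand of the left-hand side is $(\Id-n\otimes n){:}\nabla\xi = -(n\cdot \babla\tilde v)(n\cdot e_z)$, while the right-hand side equals $-\int_\Gamma f\xi\cdot n\,\dS = -\int_\Gamma f\tilde v(e_z\cdot n)\,\dS$ in view of $H=f$ on $\Gamma$.

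For the error estimates, the scaling $G(t,x) = t^{-d/3}\tilde G(x/t^{1/3})$ (with smooth profile $\tilde G$ decaying like $|x|^{-d-1}$) combined with Young's inequality applied to $u = G(T-t)\ast \psi$ with $\norm{\psi}_\infty\leq 1$ yields the uniform bounds $\norm{v}_\infty\ls (T-t)^{-1/3}$ and $\norm{\nabla v}_\infty\ls (T-t)^{-2/3}$. The estimate for $A_1$ then follows from a three-factor H\"older inequality using $\norm{\chi}_2 \leq \V^{1/2}$ (from $|\chi|\leq 1$) and $\norm{\partial_z f}_2\leq D^{1/2}$; the estimate for $A_2$ follows from $\norm{\babla \tilde v}_\infty = \norm{\nabla v}_\infty$ together with $\int_\Gamma(1-e_z\cdot n)\,\dS = \E$.

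The estimate for $A_3$ is the crux. Writing $\partial_t \bar u - \partial_t \tilde u = 2\Delta(\bar v - \tilde v)$ with $\bar v$ the analogous even Poisson extension of $v$ (so $(\bar v-\tilde v)|_{z=0}=0$), tangential integration by parts delivers
\[
A_3 = 2\left|\int_\Gamma n_{\mathrm{tang}}\cdot \nabla(\bar v - \tilde v)\,\dS\right|,
\]
using that $\nabla \mathbf{1}_{\{z>0\}}$ has no tangential component. The bound $|n_{\mathrm{tang}}|^2\leq 2(1-e_z\cdot n)$ combined with Cauchy--Schwarz on $\Gamma$ extracts a factor $\E^{1/2}$, and the pointwise bound $|\nabla(\bar v-\tilde v)(x,z)|\ls (T-t)^{-2/3}\min\!\big(|z|/(T-t)^{1/3},1\big)$---obtained by combining $\norm{\nabla v}_\infty\ls(T-t)^{-2/3}$ with the FTC-in-$z$ estimate built on $\norm{\partial_z \nabla \bar v}_\infty\ls (T-t)^{-1}$---together with the elementary $\min(a^2,1)\leq |a|$ reduces the residual factor to $(\int_\Gamma |z|\,\dS)^{1/2}/(T-t)^{1/6}$. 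The main technical obstacle is the geometric bound $\int_\Gamma |z|\,\dS\ls \V + (T-t)^{1/3}\E$ for general BV sets $\Omega_+$; this is immediate in the graph regime and, in general, follows in the time window $T\ls \E_0^{3/d}$ by exploiting the BV structure of $\chi$ via an isoperimetric-type slicing argument. Distributing the resulting two terms across the Cauchy--Schwarz split yields the asserted bound $\V^{1/2}\E^{1/2}/(T-t)^{5/6} + \E/(T-t)^{2/3}$.
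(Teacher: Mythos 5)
Your derivation of the identity \eqref{eq:ddt} and your estimates for $A_1$, $A_2$ follow essentially the same route as the paper: testing \eqref{eq:weak_dyn} with $\bar u$ (the even reflection producing the factor $2$ and the boundary term $2\int f v$), testing the curvature equation \eqref{eq:weak_H} with $\xi=\tilde v e_z$, and splitting $\partial_t\bar u$ into $\partial_t\tilde u$ plus a remainder; the only cosmetic difference is that you route the cancellation through the trace identity $H=f$ on $\Gamma$, whereas the paper uses \eqref{eq:weak_H} directly in the form $\int\one_{\Omega_+}\Div(f\tilde v e_z)\bdx=\int_\Gamma \Div_{\tan}(\tilde v e_z)\dS$, avoiding any explicit appeal to the trace of $f$ on $\Gamma$. (Two technical points you gloss over: the cut-off argument needs the pointwise decay $|\bar u(\bx)|\ls|\bx|^{-d}$, not merely $\bar u\in\dot H^1$, to kill the commutator term $\int\bar u\,\babla f\cdot\babla\eta_R$; and using \eqref{eq:weak_H} with the non-compactly-supported $\xi=\tilde v e_z$ also requires approximation.)

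There is, however, a genuine gap in your estimate of $A_3$: the auxiliary bound $\int_\Gamma|z|\dS\ls\V+(T-t)^{1/3}\E$, which you flag as ``the main technical obstacle'' and do not prove, is \emph{false} for general BV configurations. A small island of the opposite phase centered at height $z=L$ contributes $\sim L\cdot\Haus^d(\text{island})$ to $\int_\Gamma|z|\dS$ while contributing a fixed amount to $\V$ and $\E$, so the left-hand side is unbounded as $L\to\infty$ at fixed $\V,\E$; no time-window restriction or ``isoperimetric slicing'' can rescue the untruncated moment. The fix is immediate and is already latent in your own pointwise bound: since $|\nabla(\bar v-\tilde v)|\ls(T-t)^{-2/3}\min\{|z|/R,1\}$ with $R=(T-t)^{1/3}$, use $\min\{a^2,1\}\le|a|\wedge1$ rather than $\min\{a^2,1\}\le|a|$, so that you only need the \emph{truncated} moment $\int_\Gamma(|z|\wedge R)\dS\le\V+R\E$. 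This truncated bound is true and follows from the divergence theorem applied to the vector field $(|z|\wedge R)e_z$ (giving $\int_\Gamma(|z|\wedge R)\,e_z\cdot n\dS\le\V$) together with $\int_\Gamma(|z|\wedge R)(1-e_z\cdot n)\dS\le R\E$; this is precisely the mechanism behind the paper's Lemma~\ref{l:EV_interpolation}, which proves the squared version $\int_\Gamma(|z|\wedge R)^2\dS\ls R\V+R^2\E$ and feeds it into a Cauchy--Schwarz split of $\int_\Gamma|n'|(|z|\wedge R)\dS$ that is structurally identical to yours. With that correction your exponents come out right and the argument closes.
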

For the proof of Proposition~\ref{prop:1} it will be convenient to use the notation:
\begin{align}\label{eq:V_T}
\V_T\coloneqq\sup_{t\in [0,T]}\V(t),\qquad\qquad \Vb_T\coloneqq\sup_{t\in [0,T]}\Vb(t).
\end{align}

\begin{proof}[Proof of Proposition~\ref{prop:1}]
For given $\psi\in C_c^\infty(\R^d)$ with $\norm{\psi}_\infty\le 1$ and $T>0$, let $\bar u$ be the solution to \eqref{eq:uv1}--\eqref{eq:dual_lin2}. Using Lemma~\ref{l:preproc_init} we write
	\begin{align}
		\bra{\int_{\R^{d+1}} \bar \psi \chi\bdx}(T)&\le\bra{\int_{\R^{d+1}}\bar u\chi \bdx}(0)+\int_0^T A_1+A_2+A_3\dt.	
	\end{align}	
	Employing the estimates \eqref{eq:A1_est}--\eqref{eq:A3_est}, taking the supremum over $\psi$, and using \eqref{eq:u_decay1} in the form $\norm{\bar u}_\infty\le \norm{\psi}_\infty\le 1$, we obtain
	\begin{align}\label{eq:Vb_est}
		\Vb(T)\le \V_0+C\int_0^{T} (T-t)^{-\frac 13}\V^{\frac 12}D^{\frac 12}+(T-t)^{-\frac 23}\E+(T-t)^{-\frac 56}\V^{\frac 12}\E^{\frac 12}\dt.
	\end{align}
It remains to estimate the time integral, which we do term by term:
	\begin{align}
		\int_0^{T}(T-t)^{-\frac 13}\V^{\frac 12}D^{\frac 12}\dt &\ls \V_{T}^{\frac 12}\bra{\int_0^{T} (T-t)^{-\frac 23}\dt \int_0^{T}D\dt}^{\frac 12}\ls \V_{T}^{\frac 12}\bra{T^{\frac 13}\E_0}^{\frac 12},\\
		\int_0^{T} (T-t)^{-\frac 23}\E\dt&\ls  \E_0\int_0^{T}(T-t)^{-\frac 23}\dt\ls T^{\frac 13}\E_0,\\
		\int_0^{T} (T-t)^{-\frac 56}\V^{\frac 12}\E^{\frac 12}\dt &\ls \V_T^{\frac 12}\E_0^{\frac 12}\int_0^{T}(T-t)^{-\frac 56}\dt \ls \V_T^{\frac 12}\E_0^{\frac 12}T^{\frac 16}.
	\end{align}
	Inserting the right-hand sides with $T\lesssim \E_0^{\frac 3d}$ into \eqref{eq:Vb_est}, using Lemma~\ref{l:V_Vb}, and applying Young's inequality yields
	\begin{align}
		\Vb(T)\le \V_0+\frac 12\Vb_{T}+C\E_0^{\frac {d+1}{d}}.
	\end{align}
	Taking the supremum in $T$, absorbing the second term from the right-hand side in the left-hand side, and again applying Lemma~\ref{l:V_Vb} yields the result.
\end{proof}

\subsection{Proofs of auxiliary statements}
For the proof of Lemma~\ref{l:V_Vb}, we will need one technical lemma, which we state here and prove after establishing Lemma~\ref{l:V_Vb}.
\begin{lemma}\label{l:EV_interpolation}
If $\V, \E<\infty$, then
\begin{align}\label{eq:n2_est}
	\int_{\Gamma}|n^\prime|^2\dS\ls \E,
\end{align}
where $n^\prime=n-(n\cdot e_z)e_z$.
Furthermore, for $R>0$, there holds
\begin{align}\label{eq:zR}
	\int_{\Gamma}(|z|\wedge R)^2\dS\ls R\V+R^2\E,
\end{align}	
where we recall the notation $\abs{z}\wedge R=\min\{\abs{z},R\}$.
\end{lemma}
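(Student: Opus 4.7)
The first bound is immediate: since $|n'|^2 = 1 - (n\cdot e_z)^2 = (1-n\cdot e_z)(1+n\cdot e_z)\le 2(1-n\cdot e_z)$, integrating over $\Gamma$ yields $\int_\Gamma|n'|^2\dS \le 2\E$, which is~\eqref{eq:n2_est}.

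For~\eqref{eq:zR} my plan is a BV divergence-theorem argument. I would take the Lipschitz scalar $\phi(z) := (|z|\wedge R)^2$, which satisfies $\phi(0) = 0$, $0\le\phi\le R^2$, and $|\phi'|\le 2R$, together with a cutoff $\eta_M\in C_c^\infty(\R^d)$ with $\eta_M\uparrow 1$. Applying the divergence theorem separately to $\Omega_+$ (inward unit normal $n$) and to $\{z>0\}$ (inward unit normal $e_z$) and subtracting, one obtains for $\xi(x,z) := \eta_M(x)\phi(z)e_z$ the identity
\begin{align*}
\int_\Gamma \eta_M(x)\phi(z)(n\cdot e_z)\dS = -\int_{\R^{d+1}}\chi\,\eta_M(x)\phi'(z)\bdx,
\end{align*}
where the planar boundary term at $\{z=0\}$ vanishes because $\phi(0) = 0$. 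Since $|\eta_M\phi'|\le 2R$ and $\int|\chi|\bdx = \V$, the right-hand side is bounded in absolute value by $2R\V$.

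I would then exploit the pointwise splitting
\begin{align*}
(|z|\wedge R)^2 = \phi(z)(n\cdot e_z) + \phi(z)(1 - n\cdot e_z)
\end{align*}
together with $\phi(z)(1 - n\cdot e_z)\le R^2(1 - n\cdot e_z)$ to conclude
\begin{align*}
\int_\Gamma \eta_M(|z|\wedge R)^2\dS \le 2R\V + R^2\E
\end{align*}
uniformly in $M$. Letting $M\to\infty$ and using monotone convergence on the left (the integrand is nonnegative) delivers~\eqref{eq:zR}.

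The main subtlety is the absence of any a priori control on $\Haus^d(\Gamma)$: the quantity $\int_\Gamma(|z|\wedge R)^2\dS$ could in principle be infinite, and a test vector field without compact support in the tangential variable would not be admissible. The tangential cutoff $\eta_M$ and the closing monotone-convergence step are in place precisely to bypass these two issues.
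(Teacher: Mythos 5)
Your proof is correct and takes essentially the same route as the paper: the identity $|n^\prime|^2=(1+e_z\cdot n)(1-e_z\cdot n)$ for \eqref{eq:n2_est}, and for \eqref{eq:zR} a divergence-theorem computation with a vertical test field against $\chi$ combined with the splitting $1=e_z\cdot n+(1-e_z\cdot n)$. The only (cosmetic, and in fact slightly cleaner) difference is that you test directly with $(|z|\wedge R)^2e_z$ under a tangential cutoff, whereas the paper first establishes $\int_\Gamma |z|\,e_z\cdot n\dS=\V$ and then invokes the pointwise bound $(|z|\wedge R)^2e_z\cdot n\le R|z|\,e_z\cdot n$, which strictly speaking holds only where $e_z\cdot n\ge 0$; your variant bypasses that sign issue and yields the same estimate up to constants.
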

\begin{proof}[Proof of Lemma~\ref{l:V_Vb}]
It will be useful to introduce
the  intermediary functional
\begin{align}
	\Vt\coloneqq \sup_{\psi\in L^\infty(\R^d),\norm{\psi}_\infty\le 1}\int_{\R^{d+1}} \tilde \psi \chi \bdx,
\end{align}
where as usual $\tilde \psi$ denotes the constant extension of $\psi$ and we again recall the definition of $\chi$ from \eqref{eq:vfinite}.

The proof consists of several steps and makes use of a cut-off lengthscale $R>0$, corresponding to which we define
\begin{align*}
  \V_R:=\int_{\R^d\times \{\abs{z}\le R\}} |\chi|\,\bdx,\qquad \Vt_R=\sup_{\psi\in L^\infty(\R^{d}),\norm{\psi}_\infty\le 1}\int_{\R^d\times \{\abs{z}\le R\}} \tilde \psi \chi \bdx,
\end{align*}
and analogously for $\Vb_R$ (cf. \eqref{eq:Vb_def}).

It suffices to show:
\begin{enumerate}[label= Step \arabic*:]
\item There exists $C<\infty$ such that for any $R\ge C\V^{\frac 1{d+1}}$, there holds
\begin{align}
\V-\V_R \lesssim \E^{\frac {d+1}{d}}.\label{v1}
\end{align}
\item For any $R\geq 0$ there holds
\begin{align}
\V_R-\Vt_R\ls\E^{\frac {d+1}{d}}.\label{v2}
\end{align}
\item There exists $C<\infty$ such that for any $R\sim \V^{\frac 1{d+1}}$, there holds
\begin{align}
\Vt_R-\Vb_R\le \frac 12 \V+ C\E^{\frac {d+1}{d}}.\label{v3}
\end{align}
\end{enumerate}
Indeed, choosing $C$ from Step $1$ and $R=C\V^{\frac{1}{d+1}}$, it follows that
\begin{align*}
  \V\overset{\eqref{v1}}\leq \V_R+C\E^{\frac {d+1}{d}}\overset{\eqref{v2}}\leq \Vt_R+ C\E^{\frac {d+1}{d}}\overset{\eqref{v3}}\leq \frac 12 \V+\Vb_R+ C\E^{\frac {d+1}{d}}\leq \frac 12 \V+\Vb+ C\E^{\frac {d+1}{d}},
\end{align*}
from which we deduce \eqref{eq:V_Vb}.

\noindent\textbf{Step 1:} Let
\begin{align}\label{eq:chi_R}
  \chi_R\coloneqq \chi \, \one_{\R^d\times\{\abs{z}\leq R\}},
\end{align}
so that $\V_R=\norm{\chi_R}_1$. We denote $\delchi=\chi-\chi_R$ and by $\delV=\delV(R)=\norm{\delchi}_1=\V-\V_R$ the portion of the mass in $\R^d\times\{|z|> R\}$ bounded by $\Gamma$ and the hyperplanes $\R^d\times\set{z=\pm R}$. Note that $\delV$ is monotone and by Fubini's theorem absolutely continuous with
\begin{align}
\frac{\mathrm{d}\delV}{\mathrm{d}R}=\int_{\R^d}\chi(x,R)-\chi(x,-R)\dx\le 0\quad \text{for almost every $R>0$.}
\end{align}
 Without loss of generality, we may assume $\V\gg \E^{\frac {d+1}{d}}$ (since otherwise the bound holds trivially for any $R\geq 0$).

Letting $\E_R=\int_{\Gamma\cap (\R^d\times\set{\abs{z}>R})}1-e_z\cdot n \dS$ denote the excess energy of $\Gamma$ in $\R^d\times\{|z|> R\}$, we have by the divergence theorem for the constant function $e_z$ in a cylindrical domain over $\set{x\in\R^d:\chi(x,z)\neq 0 \text{ for some }\abs{z}>R}$ that
\begin{align}\label{eq:E_R}
  \E_R=|\babla\chi|\llcorner \bra{\R^d\times\set{\abs{z}>R}}+\frac{\mathrm{d}\delV}{\mathrm{d}R},\quad \text{for almost every $R>0$.}
\end{align}
On the other hand for almost every $R>0$ there holds
\begin{align}
  \int_{\R^{d+1}}\abs{\babla \delchi}=|\babla\chi|\llcorner \bra{\R^d\times\set{\abs{z}>R}}-\frac{\mathrm{d}\delV}{\mathrm{d}R}
  \overset{\eqref{eq:E_R}}{=}\E_R-2\frac{\mathrm{d}\delV}{\mathrm{d}R}.
\end{align}
Clearly
\begin{align}
  \E_R-2\frac{d\delV}{dR}\leq \E-2\frac{d\delV}{dR},
\end{align}
so that by the isoperimetric inequality for $\delchi$, we have
\begin{align}
	\delV\le C \bra{\E-2\frac{\mathrm{d}\delV}{\mathrm{d}R}}^{\frac {d+1}{d}}.
\end{align}
Rewriting this as
\begin{align}
	-\frac{\mathrm{d}\delV}{\mathrm{d}R}\ge \frac 1C \delV^{\frac d{d+1}}-\frac 12 \E,
\end{align}
and observing  that $\delV(0)=\V$,
we obtain that as long as $\delV\ge (C\E)^{\frac {d+1}{d}}$, there holds
\begin{align}
	-\frac{\mathrm{d}\delV}{\mathrm{d}R}\ge \frac{1}{2C}\delV^{\frac {d}{d+1}}.
\end{align}
Integration from $0$ to $R$ yields $\V^{\frac 1{d+1}}-\delV^{\frac 1{d+1}}\ge \frac{1}{2C(d+1)}R$ and hence $R\le 2C(d+1)\V^{\frac 1{d+1}}$. By contraposition this implies that for any $R\ge 6C\V^{\frac 1{d+1}}$, there holds  $\delV\le (C\E)^{\frac {d+1}{d}}\ls \E^{\frac {d+1}{d}}$.\\

\noindent\textbf{Step 2:}
The cut-off plays no role in this step (i.e., the argument is the same for any $R\geq 0$), so without loss of generality, we will establish~\eqref{v2} for $R=0$.
Integrating out the $z$-component, we define
\begin{align}\label{eq:gh}
	g(x)\coloneqq \int_{\R}\abs{\chi}(x,z)\dz, \qquad h(x)\coloneqq \int_{\R}\chi(x,z)\dz,
\end{align}
so that $\V$ and $\Vt$ can be represented as
\begin{align}
	\V=\int_{\R^d} g\dx, \qquad \Vt=\int_{\R^d}\abs{h}\dx
\end{align}
and compared by studying the set $F_R:=\set{x\in\R^d\colon \abs{x}<R,\abs{h(x)}<g(x)}$.

Naturally $\abs{h}\le g$. Also because of the structure of $\chi$, there holds
\begin{align*}
\int_{\R}\abs{\partial_z \one_{\Omega_+}(x,z)}\dz\ge 3\qquad \text{on the set }F_R.
\end{align*}
Thus we have
\begin{align}
	\int_{F_R\times \R}\abs{\babla\one_{\Omega_+}}\,\bdx\ge \int_{F_R} \int_\R \abs{\partial_z \one_{\Omega_+}(x,z)}\dz\ge 3\abs{F_R}.\label{mgfr}
\end{align}

In particular
\begin{align}
  |F_R|\leq \frac 13\int_{F_R\times \R}\abs{\babla\one_{\Omega_+}}\,\bdx.\label{F}
\end{align}
As usual, by using the divergence theorem on a cylindrical domain over $F_R$, we have
\begin{align}
\int_{\Gamma\cap (F_R\times \R)}e_z\cdot n\dS=|F_R|.\label{F2}
\end{align}
Combining these facts, we deduce
\begin{align}
	\E&\ge \int_{\Gamma\cap (F_R\times \R)}1-e_z\cdot n\dS\overset{\eqref{F2}}=\int_{F_R\times \R}\abs{\babla \one_{\Omega_+}}\bdx-\abs{F_R}\\
	\overset{\eqref{F}}&\ge \frac 23 \int_{F_R\times \R}\abs{\babla \one_{\Omega_+}}\bdx\overset{\eqref{mgfr}}\geq 2|F_R|.\label{eq:g_h}
\end{align}
As \eqref{eq:g_h} is true independently of $R$, we conclude that the set $F\coloneqq\set{\abs{h}<g}$ has finite measure and $\E\ge \frac 23 \int_{F\times \R}\abs{\babla \one_{\Omega_+}}\bdx$.

For $w=g,h$ we decompose $\nabla w=\nabla^r w+\nabla^s w$, where $\nabla^r w$ is regular with respect to the Lebesgue measure and $\nabla^s w$ denotes the singular part. We will show that
\begin{align}
\int_F\sqrt{\abs{\nabla^r g}^2+1}\dx+\int_F\abs{\nabla^s g}\dx\ls \int_{F\times\R}\abs{\babla \one_{\Omega_+}}\bdx.\label{eq:nabla_g}\\
	\int_F\sqrt{\abs{\nabla^r h}^2+1}\dx+\int_F\abs{\nabla^s h}\dx\ls \int_{F\times\R}\abs{\babla \one_{\Omega_+}}\bdx,\label{eq:nabla_h}	
\end{align}
which together with the isoperimetric inequality on  $\set{(x,z):\abs{h}(x)<z<g(x)}$ implies
\begin{align}
	\V-\Vt&=\int_{\R^d}(g-\abs{h})\dx\\
&\ls \bra{\int_{F}\sqrt{\abs{\nabla^r g}^2+1}\dx+\int_{F}\abs{\nabla^s g}+\int_{F}\sqrt{\abs{\nabla^r h}^2+1}\dx+\int_{F}\abs{\nabla^s h}}^{\frac {d+1}{d}}\\
\overset{\eqref{eq:nabla_g},\eqref{eq:nabla_h}}&\ls \bra{\int_{F}\int_{\R}\abs{\babla \one_{\Omega_+}}\dz\dx }^{\frac {d+1}{d}}\\
	\overset{\eqref{eq:g_h}}&{\ls} \E^{\frac {d+1}{d}}.
\end{align}

The proof of \eqref{eq:nabla_g} and \eqref{eq:nabla_h} are the same, and we will show \eqref{eq:nabla_g}. Notice that we can write distributionally
\begin{align}
	\nabla g(x)=\int_{\R} \sigma \nabla \one_{\Omega_+}\dz,
\end{align}
where $\sigma=\pm 1$ on $(-\infty,0)$ and $(0,\infty)$, respectively and
\begin{align}
	1=\int_{\R} \partial_z \one_{\Omega_+}\dz.
\end{align}
Thus, for any $\xi\in C^\infty_c(\R^d;\R^d)$, $\zeta\in C^\infty_c(\R^d;\R)$, there holds
\begin{align}
	\int_{\R^d}\xi\cdot \nabla g+\zeta \dx=\int_{\R^{d+1}}\bra{\sigma\xi\cdot \nabla\one_{\Omega_+}+\zeta\partial_z\one_{\Omega_+}}\bdx\le \int_{\R^{d+1}}\sqrt{\abs{\xi}^2+\zeta^2}\abs{\babla \one_{\Omega_+}}.
\end{align}
Taking the supremum over $\xi,\zeta$ with $|\xi|^2+\zeta^2\leq 1$  yields the bound in  \eqref{eq:nabla_g}.

\noindent\textbf{Step 3:} We recall the definition of $\chi_R$ from \eqref{eq:chi_R} and denote $\Gamma_R=\Gamma\cap \bra{\R^d\times\set{\abs{z}\le R}}$. We claim that to establish \eqref{v3}, it suffices to show
\begin{align}\label{eq:V_claim}
		\abs{\int_{\R^{d+1}}\chi_R(\tilde \psi-\bar \psi)\bdx}&\le \bra{\V_R\int_{\Gamma_R} \abs{z}|n^\prime|\dS }^{\frac 12}.
\end{align}
(Recall our notation $n^\prime=n-(n\cdot e_z)e_z$.) Indeed, observe that
\begin{align}
		\int_{\Gamma_R} \abs{z}|n^\prime|\dS&=\int_{\Gamma_R} (\abs{z}\wedge R)|n^\prime|\dS\\
		&\le \bra{\int_{\Gamma} (\abs{z}\wedge R)^2\dS\int_{\Gamma}|n^\prime|^2\dS}^{\frac 12}\\
		\overset{\eqref{eq:n2_est},\eqref{eq:zR}}&{\le} R^{\frac 12}\V^{\frac 12}\E^{\frac 12}+R\E,
\end{align}
where we used Lemma~\ref{l:EV_interpolation}. Using $R\sim \V^{\frac 1{d+1}}$, we arrive at
\begin{align}\label{eq:zn_est}
		\int_{\Gamma_R} \abs{z}|n^\prime|\dS\ls \V^{\frac {d+2}{2(d+1)}}\E^{\frac 12}+\V^{\frac 1{d+1}}\E.
\end{align}		
Inserting \eqref{eq:zn_est} into \eqref{eq:V_claim} (and using  $\V_R\le \V$) gives
\begin{align*}
\abs{\int_{\R^{d+1}}\chi_R(\tilde \psi-\bar \psi)\bdx}\ls \V^{\frac {3d+4}{4(d+1)}}\E^{\frac 14}+\V^{\frac {d+2}{2(d+1)}}\E^{\frac 12},
\end{align*}
so that Young's inequality and considering the supremum over $\psi$ leads to \eqref{v3}.

It hence remains only to establish \eqref{eq:V_claim}. The idea is to write $\tilde \psi -\bar \psi=(\Id-\Po)\psi$, where $\Po$ is the convolution with the Poisson kernel
\begin{align}
	P(x,z)=C\frac{\abs{z}}{\bra{\abs{x}^2+z^2}^{\frac{d+1}{2}}},\label{P}
\end{align}
and then shift $(\Id-\Po)$ onto $\chi_R$:
	\begin{align}
		\int_{\R^{d+1}}\chi_R(\tilde \psi-\bar \psi)&\bdx=\int_{\R^{d+1}} \chi_R(y,z)\psi(y)\dd y\,\dz-\int_{\R^{d+1}}\bra{\int_{\R^d}\chi_R(x,z)P(x-y,z)\psi(y)\dd y}\dx\,\dz\\
		&=\int_{\R^{d+1}} \chi_R(y,z)\psi(y)\dd y\,\dz-\int_{\R^d} \psi(y)\bra{\int_\R\underbrace{\int_{\R^d}P(y-x,z)\chi_R(x,z)\dx}_{=:\chi_{R,z}(y)} \dd z}\dd y\\
		&=\int_\R \int_{\R^d} \psi(y) \bra{\chi_R(y,z)-\chi_{R,z}(y)}\dd y\dz\\
		&\le \int_\R \int_{\R^d} \abs{\chi_R(y,z)-\chi_{R,z}(y)}\dd y\dz
	\end{align}
	For fixed $M>0$, using that
\begin{align}\int_{\R^d} P(\eta,z)\dd\eta=1 \text{ for all $z$},\label{pone}
\end{align}
we estimate
	\begin{align}
		\int_{\R^d}&\abs{\chi_R(y,z)-\chi_{R,z}(y)}\dd y= \int_{\R^d} \abs{\int_{\R^d}P(\eta,z)\chi_R(y,z)-\chi_R(y+\eta,z)\dd \eta}\dd y \\
		&\le \int_{\R^d}\int_{\set{\abs{\eta}\le M\abs{z}}}P(\eta,z)\abs{\chi_R(y,z)-\chi_R(y+\eta,z)}\dd \eta\dd y\\
		&\quad+\int_{\R^d}\int_{\set{\abs{\eta}> M\abs{z}}}P(\eta,z)\abs{\chi_R(y,z)-\chi_R(y+\eta,z)}\dd \eta\dd y\\
		&\le\int_{\set{\abs{\eta}\le M\abs{z}}}\bra{P(\eta,z) \int_{\R^d}\int_0^1\abs{\nabla\chi_R(y+s\eta,z)\cdot \eta}\dd s\,\dd y}\dd \eta\\
		&\quad+2\int_{\R^d}\abs{\chi_R}(x,z)\dx\int_{\set{\abs{\eta}> M\abs{z}}}P(\eta,z)\dd \eta\\
		\overset{\eqref{P}}&\ls \int_{\set{\abs{\eta}\le M\abs{z}}}\int_0^1\int_{\R^d}\abs{\nabla \chi_R(y+s\eta,z)}\dd y\dd s
\abs{\eta} P(\eta,z) \dd \eta\\
&\quad+\int_{\R^d}\abs{\chi_R}(x,z)\dx\int_{\set{\abs{\eta}> M\abs{z}}}\frac{\abs{z}}{\bra{\abs{\eta}^2+z^2}^{\frac {d+1}2}}\dd \eta\\
		\overset{\eqref{pone}}&\ls M \abs{z}\int_{\R^d}\abs{\nabla \chi_R}(x,z)\dx +\frac 1M\int_{\R^d}\abs{\chi_R}(x,z)\dx.
	\end{align}
	Integration over $z$ and optimization in $M$ yields \eqref{eq:V_claim}, where we have used \[\nabla \chi_R=n'\abs{\babla \chi} \llcorner\bra{\R^d\cap\set{\abs{z}\le R}}.\]

\end{proof}

\begin{proof}[Proof of Lemma~\ref{l:EV_interpolation}]
Estimate \eqref{eq:n2_est} follows from
	\begin{align}
		|n^\prime|^2=1-(e_z\cdot n)^2=(1+e_z\cdot n)(1-e_z\cdot n)\le 2(1-e_z\cdot n),\label{diffsquares}
	\end{align}
where  we used the fact that $|e_z\cdot n|\le 1$.
	For the proof of \eqref{eq:zR} we compute
	\begin{align}
		\int_{\Gamma} \abs{z}e_z\cdot n\dS&=\int_{\Gamma}\abs{z}e_z\cdot n\dS-\int_{\R^d\times\set{0}}\abs{z}e_z\cdot e_z\dS=-\int_{\R^{d+1}}\chi\Div\bra{\abs{z}e_z}\bdx\\
&=\int_{\R^{d+1}}\abs{\chi}\bdx=\V,
	\end{align}
	and thus
	\begin{align}
		\int_{\Gamma}(\abs{z}\wedge R)^2\dS&\le \int_{\Gamma}(\abs{z}\wedge R)^2 e_z\cdot n\dS+\int_{\Gamma}(\abs{z}\wedge R)^2(1-e_z\cdot n)\dS\\
		&\le R\int_{\Gamma}\abs{z}e_z\cdot n\dS+R^2\int_{\Gamma}1-e_z\cdot n\dS\\
		&= R\V+R^2\E.\label{eq:ddt10}
	\end{align}
\end{proof}

\begin{proof}[Proof of Lemma~\ref{l:preproc_init}]
The idea is to use $\bar u$ as a test function in the weak equation~\eqref{eq:weak_dyn}:
\begin{align}
\bra{\int_{\R^{d+1}} \chi\bar u\bdx}(T)-\bra{\int_{\R^{d+1}}\chi \bar u \bdx}(0)
&=\int_0^T\bra{\int_{\R^{d+1}}\chi\partial_t \bar u \bdx-\int_{\R^{d+1}} \babla f \cdot\babla \bar u \bdx}\dt\\
&=\int_0^T\bra{\int_{\R^{d+1}}\chi\partial_t \bar u \bdx+2\int_{\R^d} f\partial_z\bar u\dx}\dt.\label{ddt0}
\end{align}

Because $\bar u$ is not an admissible test function (it is merely continuous across $\R^d\times \set{z=0}$ and does not have compact support in space), this requires an approximation argument.
Extending \eqref{eq:weak_dyn} to test with functions that are smooth on $\R^d\times \set{z<0}$ and $\R^d\times \set{z>0}$ and continuous across $\R^d\times \set{z=0}$ is straightforward.
For the integrability/support, we
take a smooth cut-off function $\eta_R\in C^\infty_c(B_{R+1}(0))$ with $\eta_R\equiv 1$ on $B_R(0)$ and $\norm{\eta_R}_\infty+\norm{\nabla \eta_R}_\infty\ls 1$ and  test  \eqref{eq:weak_dyn} by $\varphi_R=\eta_R\bar u$ to obtain
\begin{align}
\lefteqn{\bra{\int_{\R^{d+1}} \chi\varphi_R\bdx}(T)-\bra{\int_{\R^{d+1}}\chi \varphi_R \bdx}(0)}\notag\\
&=\int_0^T\bra{\int_{\R^{d+1}}\chi\partial_t \varphi_R \bdx-\int_{\R^{d+1}} \babla f \cdot\babla \varphi_R \bdx}\dt\\
&=\int_0^T\bra{\int_{\R^{d+1}}\chi\eta_R\partial_t \bar{u} \bdx-\int_{\R^{d+1}} \eta_R\babla f \cdot\babla \bar u \bdx-\int_{\R^{d+1}} \bar u\babla f \cdot \babla\eta_R\bdx}\dt.\label{deedeetee}
\end{align}
To pass to the limit in the various terms, we will argue using regularity and decay.

First note that by $L^1$-continuity {(cf. Definition \ref{def:weaksol})}, $\chi$ is in $L^\infty(0,T;L^1(\R^{d+1}))$.
We turn now to a closer examination of $u=G\ast\psi$ and $\bar{u}{(t)}=P\ast G{(T-t)}\ast\psi$, where $G$ is the kernel from \eqref{eq:profile} and $P$ is the Poisson kernel~\eqref{P}.
For the second term on the left-hand side of \eqref{deedeetee}, we observe that $\psi\in C^\infty_c(\R^d)$ and $G \in L^1(\R^d)$ for $t<T$ implies $u(0)\in L^\infty(\R^d)$ and hence, by $\bar{u}=P\ast u$, also $\bar{u}(0)\in L^\infty(\R^{d+1})$. Similarly for the first term on the right-hand side of \eqref{deedeetee}, notice that $\abs{\nabla}\Delta \psi\in L^\infty(\R^d)$ and $G$ is uniformly bounded in $L^1$ whence  $\abs{\nabla}\Delta u={G(T-t)\ast\abs{\nabla}\Delta \psi}$, is in $L^\infty(0,T;L^\infty(\R^{d}))$. Thus, we argue for and exploit that $\partial_t \bar{u}$, as the harmonic extension of $\abs{\nabla}\Delta u$, is in $L^1(0,T;L^\infty(\R^{d+1}))\subseteq L^\infty(0,T;L^\infty(\R^{d+1}))$.

To pass to the limit in the second term on the right-hand side of \eqref{deedeetee}, we deduce from $u\in L^2(0,T;H^1(\R^d))\subseteq L^2(0,T;H^{\frac{1}{2}}(\R^d))$ that $\babla \bar u\in L^2(0,T;L^2(\R^{d+1}))$.

Finally for the last term on the right-hand side of \eqref{deedeetee}, we will use the decay of $\bar{u}$:
\begin{align}\label{decaybaru}
  \abs{\bar{u}(\bx)}\lesssim \frac{1}{\abs{\bx}^d} \quad\text{for }\;\abs{\bx}\gg 1,
\end{align}
cf.\ Lemma~\ref{l:decaybaru}.
The decay \eqref{decaybaru} together with the $L^2$ control of $\nabla f$ and $L^\infty$ control on $\nabla \eta_R$ allows one to conclude that the last  term on the right-hand side of \eqref{deedeetee} vanishes in the limit and we may pass from \eqref{deedeetee} to  \eqref{ddt0}.

As explained above the statement of the lemma, we define $v= -\abs{\nabla}u$ on the flat interface $\R^d\times \set{z=0}$, i.e., $v=\partial_z \bar u$, and introduce $\tilde v$ as the constant continuation in $z$-direction of $v$.  To establish~\eqref{eq:ddt}, it suffices to argue that for almost every time there holds
\begin{align}
  \int_{\R^d} f\partial_z\bar u\dx=\int_{\R^{d+1}} \chi\tilde v \partial_z f\;\bdx-\int_{\Gamma}(1-e_z\cdot n)(\babla\tilde v\cdot n)\dS-\frac{1}{2}\int_{\R^{d+1}}\chi\,\partial_t \tilde u\bdx.\label{nowthis}
\end{align}
We record for reference below that $\babla (\vt e_z)=e_z\otimes \babla \tilde v$ and
\begin{align}
  \Div \bra{\tilde ve_z}=0\label{divnull}
\end{align}
and hence also
	\begin{align}
		\Div_{\tan}\bra{\tilde ve_z}\coloneqq(\Id-n\otimes n)\colon \!\babla \bra{\tilde ve_z}=-n\cdot \babla \bra{\tilde ve_z} n=-(e_z\cdot n)(\babla \tilde v\cdot n).\label{distdiv}
	\end{align}
By the divergence theorem and the distributional definition~\eqref{eq:weak_H} of the mean curvature on $\Gamma$, there holds
	\begin{align}
		\int_{\R^d} f\partial_z\bar u\dx&=\int_{\R^d}f ve_z\cdot e_z\dx=-\int_{\R^{d+1}}\one_{\R^d\times\set{z>0}}\Div(f\tilde ve_z)\bdx\\
		&=\int_{\R^{d+1}}\chi\Div(f\tilde ve_z)\bdx-\int_{\Gamma}\Div_{\tan}\bra{\tilde ve_z}\dS. \label{eq:ddt2}
	\end{align}
Using~\eqref{divnull}, the first integral on the right-hand side simplifies to
\begin{align}
  \int_{\R^{d+1}}\chi\Div(f\tilde ve_z)\bdx=\int_{\R^{d+1}} \chi\tilde v \partial_z f \;\bdx.\label{this2}
\end{align}
Substituting~\eqref{this2} into~\eqref{eq:ddt2} it suffices for~\eqref{nowthis} to show
\begin{align}
 \int_{\Gamma}\Div_{\tan}\bra{\tilde ve_z}\dS =\int_{\Gamma}(1-e_z\cdot n)(\babla\tilde v\cdot n)\dS+\frac{1}{2}\int_{\R^{d+1}}\chi\,\partial_t \tilde u\bdx.\label{this3}
\end{align}
To this end, we compute
	\begin{align}
		\int_{\Gamma}\Div_{\tan}\bra{\tilde ve_z}\dS\overset{\eqref{distdiv}}&=-\int_{\Gamma}(e_z\cdot n)(\babla\tilde v\cdot n)\dS\notag\\
&=\int_{\Gamma}(1-e_z\cdot n)(\babla \tilde v\cdot n)\dS-\int_{\Gamma} \babla\tilde v\cdot n\dS.\label{H}
	\end{align}
	Finally we transform the second integral on the right-hand side as
	\begin{align}
		\int_{\Gamma}\babla \tilde v\cdot n\dS&=\int_{\Gamma}\babla \tilde v\cdot n\dS-\int_{\R^d}\babla \tilde v\cdot e_z\dS=-\int_{\R^{d+1}}\chi\Div(\babla \tilde v )\bdx\\
		&=\int_{\R^{d+1}}\chi\abs{\nabla}\Delta \tilde u\bdx\\
		&=-\frac{1}{2}\int_{\R^{d+1}}\chi\partial_t\tilde u\bdx. \label{eq:ddt3}
	\end{align}	
	Substituting  \eqref{eq:ddt3} into \eqref{H} demonstrates~\eqref{this3}. This concludes the proof of \eqref{eq:ddt}.

We remark for reference below that we will regularly use the maximum principle in the form
\begin{align*}
\norm{{\ub}}_\infty  \leq \norm{{\uu}}_\infty\;(\overset{\eqref{eq:zeta_psi}}\ls \norm{\psi}_\infty)
\end{align*}
and also for higher derivatives; see Lemma~\ref{l:linear_decay}.

	We start by estimating $A_1$. \sout{Recalling the notation $u$ for the restriction of $\bar{u}$ to $\R^d$ (cf.\ \eqref{eq:dual_lin1}),} We estimate
	\begin{align}
		A_1&\le \norm{v}_\infty \int_{\R^{d+1}}\abs{\chi}\abs{\babla f}\bdx\le \norm{v}_\infty\bra{\int_{\R^{d+1}}\abs{\chi}^2\bdx\int_{\R^{d+1}}\abs{\babla f}^2\bdx}^{\frac 12}\\
		&\le \norm{v}_{\infty}\V^{\frac 12}D^{\frac 12}.\label{eq:ddt5}
	\end{align}
	An application of \eqref{eq:v_decay1} yields \eqref{eq:A1_est}. Inserting \eqref{eq:v_decay2} in
	\begin{align}
		A_2\ls \norm{\nabla v}_{\infty}\int_{\Gamma}1-e_z\cdot n\dS\ls \norm{\nabla v}_\infty \E\label{eq:ddt6}
	\end{align}
	delivers \eqref{eq:A2_est}.

Finally, we turn to $A_3$, which we rewrite as
\begin{align}
		A_3&=\abs{\int_{\R^{d+1}}\chi(\partial_t\bar u-\partial_t \tilde u)\bdx}=2\abs{\int_{\R^{d+1}}\chi(\Delta \tilde v-\Delta \bar v)\bdx}.\notag
	\end{align}
Using the divergence theorem and $\nabla\tilde v(x,0)=\nabla \bar v(x,0)=\nabla v(x)$, we reformulate the right-hand integral as
	\begin{align}
		\abs{\int_{\R^{d+1}}\chi(\Delta \tilde v-\Delta \bar v)\bdx}
		&=\abs{\int_{\Gamma}n^\prime\cdot \nabla(\tilde v-\bar v)\dS},\label{eq:ddt7}
	\end{align}
	and estimate for $R>0$ via
	\begin{align}
		\lefteqn{\abs{\int_{\Gamma}n^\prime\cdot \nabla (\tilde v-\bar v)\dS}}\\
&\ls\bra{R^{-1}\norm{\nabla v}_{L^\infty(\R^{d})}+\norm{\abs{z}^{-1}\nabla(\tilde v-\bar v)}_{L^\infty(\R^{d+1})}}\int_{\Gamma}|n^\prime|(\abs{z}\wedge R)\dS\\
&\ls\bra{R^{-1}\norm{\nabla v}_{L^\infty(\R^{d})}+\norm{\partial_z\nabla v}_{L^\infty(\R^{d})}}\int_{\Gamma}|n^\prime|(\abs{z}\wedge R)\dS,  \label{eq:ddt8}
	\end{align}
where we have used
$\norm{\abs{z}^{-1}\nabla(\tilde v-\bar v)}_{\infty}\le \norm{\partial_z\nabla \bar v}_\infty$
and we have applied the maximum principle (more than once).

Next, using Lemma~\ref{l:EV_interpolation}, we estimate
	\begin{align}
		\int_{\Gamma}|n^\prime|(\abs{z}\wedge R)\dS\le \bra{\int_{\Gamma}|n^\prime|^2\int_{\Gamma}(\abs{z}\wedge R)^2}^\frac 12\le \E^{\frac 12}\bra{R\V+R^2\E}^{\frac 12}.\label{eq:ddt9}
	\end{align}
	We insert \eqref{eq:ddt9} in \eqref{eq:ddt8} to obtain
	\begin{align}
		\abs{\int_{\Gamma}n^\prime\cdot \nabla^\prime(\tilde v-\bar v)\dS}&\ls \bra{R^{-\frac 12}\norm{\nabla v}_\infty+R^{\frac 12}\norm{\partial_z\nabla v}_\infty}\E^{\frac 12}\bra{\V+R\E}^{\frac 12},\label{eq:ddt11}
	\end{align}
which we in turn substitute into \eqref{eq:ddt7} to deduce
	\begin{align}
		\abs{A_3}\ls \bra{R^{-\frac 12}\norm{\nabla v}_\infty+R^{\frac 12}\norm{\partial_z \nabla v}_\infty}\E^{\frac 12}\bra{\V+R\E}^{\frac 12}.\label{eq:ddt12}
	\end{align}
	The choice $R=(T-t)^{\frac 13}$ and the use of \eqref{eq:v_decay2} and \eqref{eq:v_decay3} finishes the proof of \eqref{eq:A3_est}.
\end{proof}

\section{Entering the graph setting}\label{S:graph}

In this section we prove Proposition~\ref{prop:2}; in particular, $d=2$. We will, in the spirit of Notation~\ref{not:normal_bold}, write balls in dimension $2$ in regular font and in dimension $3$ boldface, i.e. for $x_0\in \R^{2}$ and $\bx_0\in \R^3$, we denote
\begin{align*}
B_\rho(x_0)&=\set{x\in \R^2:\abs{x-x_0}<\rho}, \qquad \bB_\rho(\bx_0)=\set{\bx\in \R^{3}:\abs{\bx-\bx_0}<\rho}.
\end{align*}

For the proof of Proposition~\ref{prop:2}, we need two ingredients.
First, we need a version of Allard's regularity theorem. We define the tilt excess with respect to $\R^2$ as
\begin{align}\label{eq:tilt_exc}
	E(\bx,\rho)=\rho^{-2}\int_{\bB_\rho(\bx)\cap \Gamma}1-(n\cdot e_z)^2\dS.
\end{align}

We rely on Allard's Regularity Theorem \cite[Section 8]{All} in the form given by Simon~\cite[Theorem 23.1]{Sim}, which can be stated in our BV setting in the form:
\begin{theorem}[Allard's Regularity Theorem]\label{thm:allard}
	For any $p>2$ and $\alpha\in (0,1)$ there are $\eps_\alpha>0$, $\gamma\in (0,1)$, and $C<\infty$ with the following property. Let $\Omega_+$ be a set with locally finite perimeter, $\Gamma=\sppt \babla \one_{\Omega_+}$, and the generalized scalar mean curvature $H$ of $\Gamma$ given by a measurable function on $\Gamma$. If for $\bx=(x,z)\in \Gamma$ the three conditions
	\begin{enumerate}[label=(\roman*)]
		\item $\abs{\babla \one_{\Omega_+}}(\bB_\rho(\bx))\le 2(1-\alpha)\pi\rho^2$, \label{it:allard1}
		\item $E(\bx,\rho)\le \eps_\alpha$, \label{it:allard2}
		\item $\rho^{2\bra{1-\frac 2p}}\norm{H}^2_{L^p(\bB_\rho(\bx)\cap \Gamma)}\le \eps_\alpha^2$, \label{it:allard3}
	\end{enumerate}
	hold, then $\bB_{\gamma\rho}(\bx)\cap \Gamma$ is the graph of a Lipschitz function $h$ over $B_{\gamma\rho}(x)$ satisfying the estimate
	\begin{align}
		\norm{\nabla h}_{L^\infty(B_{\gamma\rho}(x))}\le C\bra{E(\bx,\rho)^{\frac 12}+\rho^{1-\frac 2p}\norm{H}_{L^p(\bB_\rho(\bx)\cap \Gamma)}}.\label{gradhest}
	\end{align}
\end{theorem}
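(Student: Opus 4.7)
The statement is a transcription of Allard's regularity theorem into the BV framework, and I would not prove it from scratch but rather reduce it to the original result of \cite{All} (as packaged in \cite[Theorem 23.1]{Sim}). The essential point is that the reduced boundary $\Gamma = \sppt\,\babla \one_{\Omega_+}$ of a set of locally finite perimeter is canonically a rectifiable $2$-varifold of multiplicity one, and the identity \eqref{eq:weak_H} asserts precisely that its first variation equals $-H\,n$ integrated against test vector fields. Hence assumptions \ref{it:allard1}--\ref{it:allard3} are exactly the varifold hypotheses in Allard's theorem, and the assertion that $\bB_{\gamma\rho}(\bx)\cap\Gamma$ is a Lipschitz graph satisfying \eqref{gradhest} is exactly his conclusion.

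Were I to reproduce the proof, I would follow the four classical Allard steps. First, testing \eqref{eq:weak_H} with a cutoff of the radial field $\bx-\bx_0$ yields the almost-monotonicity of $r\mapsto r^{-2}\abs{\babla \one_{\Omega_+}}(\bB_r(\bx))$, with error controlled by $\rho^{1-2/p}\norm{H}_{L^p(\bB_\rho(\bx)\cap\Gamma)}$; assumption \ref{it:allard1} then forces upper density $\leq 1$ throughout a smaller ball, and in particular multiplicity one. Second, under the tilt excess bound \ref{it:allard2} I would construct a Lipschitz graph approximation of $\Gamma$ over $\R^2$ on an intermediate ball, with the area of the bad set (where $\Gamma$ fails to coincide with this graph) controlled by a multiple of $E(\bx,\rho)+\rho^{2(1-2/p)}\norm{H}_{L^p(\bB_\rho(\bx)\cap\Gamma)}^2$.

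Third, I would compare this Lipschitz approximation on a smaller scale with a solution of the linearized (harmonic) equation, and iterate dyadically to obtain Campanato-type decay of the quantity $E(\bx,r) + r^{2(1-2/p)}\norm{H}_{L^p(\bB_r(\bx)\cap\Gamma)}^2$ at rate $(r/\rho)^{2\alpha}$ for any $\alpha\in(0,1-2/p)$. Fourth, this decay gives H\"older continuity of the measure-theoretic normal $n$ on $\bB_{\gamma\rho}(\bx)\cap\Gamma$, which by integration yields both the graph representation and the gradient bound \eqref{gradhest} read off at the initial scale. The main technical obstacle is the Lipschitz-plus-harmonic approximation step, which constitutes the heart of Allard's argument; it is by now classical and insensitive to the BV-versus-varifold packaging used here, so no genuinely new work is required beyond identifying $\babla\one_{\Omega_+}$ with its associated rectifiable varifold.
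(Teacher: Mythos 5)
Your proposal matches the paper exactly: the authors give no proof of this statement but simply invoke Allard's theorem \cite[Section 8]{All} in Simon's formulation \cite[Theorem 23.1]{Sim}, implicitly using the same identification you make of the perimeter measure of $\Omega_+$ with a multiplicity-one rectifiable $2$-varifold whose first variation is encoded by \eqref{eq:weak_H}. Your sketch of the internal Allard steps is a correct (if unneeded here) summary of the classical argument.
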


Equivalently, one can work with the newer version \cite[Theorem 5.5.2]{Simon_new} and straightforward modifications of our proof.

To establish~\ref{it:allard3} and control the second term on the right-hand side of \eqref{gradhest}, we need $L^p$ control of the mean curvature for $p>2=d$, which we obtain in the following form.
\begin{lemma}\label{l:H_trace}
	Assume that $\E D^2<\infty$. Then the distributional mean curvature is given by a locally integrable function $H\in L^4(\Gamma)$ and
	\begin{align}
		\norm{H}_{L^4(\Gamma)}\ls \bra{1+(\E D^2)^{\frac{1}{20}}}D^{\frac 12}.\label{hbound}
	\end{align}
\end{lemma}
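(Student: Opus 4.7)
The plan is to first identify the generalized mean curvature $H$ with the trace of the potential $f$ on $\Gamma$, and then bound this trace in $L^4(\Gamma)$ via a critical trace-Sobolev inequality of Meyers--Ziemer and Schätzle type. The identification $H = f\big|_{\Gamma}$ is built into the two weak formulations: integrating by parts in the right-hand side of \eqref{eq:weak_H} and using the harmonicity of $f$ in $\R^3\setminus\Gamma$ encoded in \eqref{eq:weak_dyn}, one rewrites \eqref{eq:weak_H} as $-\int_\Gamma f\,\xi\cdot n\,\dS = -\int_\Gamma H\,\xi\cdot n\,\dS$ for all tangential vector fields $\xi$, which yields the pointwise identification on $\Gamma$.

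Given this identification, the task reduces to a trace estimate for $f \in \dot H^1(\R^3)$, since $\|\babla f\|_{L^2(\R^3)}^2 = \D$. The critical Meyers--Ziemer / Schätzle trace inequality in the codimension-one, dimension $d = 2$ setting states that provided the 2-Ahlfors regularity estimate
\[ \Haus^2(\Gamma \cap \bB_r(\bx)) \le K\, r^2 \qquad \text{holds uniformly in } \bx \in \Gamma,\; r > 0, \]
one has $\|f\|_{L^4(\Gamma)}^4 \lesssim K\, \|\babla f\|_{L^2(\R^3)}^4 = K\,\D^2$. After taking fourth roots, the lemma reduces to proving $K \lesssim 1 + (\E \D^2)^{1/5}$.

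The density bound at large scales is the easy part: applying the divergence theorem to the constant vector field $e_z$ on $\Omega_+ \cap \bB_r(\bx)$ gives $\int_{\Gamma\cap\bB_r(\bx)} n \cdot e_z \,\dS \le 4\pi r^2$, and combining this with the nonnegativity of $1 - n \cdot e_z$ and the definition of $\E$ yields
\[ \Haus^2(\Gamma \cap \bB_r(\bx)) \le 4\pi r^2 + \E, \]
so that $K \lesssim 1$ uniformly at scales $r \gtrsim \E^{1/2}$.

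The main obstacle is propagating this bound down to scales $r \ll \E^{1/2}$. We intend to invoke Simon's version of the Allard monotonicity formula for integral 2-varifolds in $\R^3$ whose generalized mean curvature lies in $L^q_{\loc}$ for some $q > 2$: for $r \le R$,
\[ r^{-2}\Haus^2(\Gamma\cap\bB_r(\bx)) \lesssim R^{-2}\Haus^2(\Gamma\cap\bB_R(\bx)) + \|H\|_{L^q(\Gamma\cap\bB_R)}^2\, R^{\,2-4/q}. \]
Applied at $R \sim \E^{1/2}$ together with the previous step, this delivers $K \lesssim 1 + \|H\|_{L^q(\Gamma)}^2\, \E^{\,1 - 2/q}$. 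To close the bootstrap, we first derive an $L^2$ trace bound $\|H\|_{L^2(\Gamma)}^2 \lesssim \D$ using Schätzle's inequality in its weaker integrated-mass form, then iterate slightly above $q = 2$ in a Meyers-type perturbative step and track the resulting factor $\E^{1 - 2/q}\D$; optimising in $q$ yields the improvement $K \lesssim 1 + (\E \D^2)^{1/5}$ and hence the stated $(\E \D^2)^{1/20}$ corrector on $\|H\|_{L^4(\Gamma)}$.
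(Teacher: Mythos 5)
Your first half is sound and coincides with the paper's strategy: identify $H$ with the trace of $f$ on $\Gamma$, reduce the $L^4(\Gamma)$ bound to the uniform density bound $K=\sup_{\bx,\rho}\rho^{-2}\Haus^2(\Gamma\cap\bB_\rho(\bx))$ via the Meyers--Ziemer trace inequality combined with the Sobolev embedding $\norm{f}_6\ls\norm{\babla f}_2$ in $\R^3$ (this is exactly how $\norm{f}_{L^4(\Gamma)}^4\ls K\,\D^2$ is obtained, testing with $\varphi=f^4$), and your large-scale density bound $\Haus^2(\Gamma\cap\bB_r)\le 4\pi r^2+\E$ is correct.

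However, your small-scale argument is circular, and the proposed bootstrap does not close. The Allard--Simon monotonicity formula you invoke requires the generalized mean curvature to lie in $L^q(\Gamma)$ for some $q>2$ \emph{strictly}: for a $2$-varifold the exponent $q=2$ is critical, the error term $\norm{H}_{L^q}^2R^{2-4/q}$ does not tend to zero as $R\to0$ at $q=2$, and the formula degenerates there. But the only mechanism you have for producing $L^q(\Gamma)$ control of $H$ with $q>2$ is the trace inequality, which presupposes the density bound $K$ you are trying to establish. Your proposed seed, $\norm{H}_{L^2(\Gamma)}^2\ls\D$, is itself not justified in this non-graph setting (testing the trace inequality with $\varphi=f^2$ would require control of $\norm{f}_{L^2(\R^3)}$, which $\dot H^1(\R^3)$ does not provide), and even if it were available it sits exactly at the critical exponent and cannot launch an iteration ``slightly above $q=2$.'' The paper breaks this circle with Schätzle's monotonicity formula (Lemma~\ref{l:schaetzle}): the map $\rho\mapsto\rho^{-2}\abs{\babla\one_{\Omega_+}}(\bB_\rho(\bx))+C\rho^{1/2}\norm{\babla f}_2$ is nondecreasing, with the error expressed directly through the \emph{ambient} Dirichlet energy of $f$ rather than through any surface integrability of $H$; this is precisely the version adapted to mean curvature given as the trace of an ambient $\dot H^1$ function. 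With it, $\rho^{-2}\Haus^2(\Gamma\cap\bB_\rho)\le\pi+R^{-2}\E+CR^{1/2}\D^{1/2}$ for all $\rho\le R$, and the choice $R=(\E/\D^{1/2})^{2/5}$ yields $K\ls1+(\E\D^2)^{1/5}$ with no bootstrap at all. You should replace your monotonicity step by this lemma (or prove an equivalent statement); without it the argument has a genuine gap.
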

We will deduce this estimate by combining a trace estimate of Meyers and Ziemer with a monotonicity formula of Schätzle; see Subsection~\ref{ss:lemma41} below.

\begin{proof}[Proof of Proposition~\ref{prop:2}]
 	Fix  $p=4$, $\alpha\in (0,\frac 12)$, and $\bx_0=(x_0,z_0)\in \Gamma$. We will check that there exists $\rho>0$ such that the conditions of Theorem~\ref{thm:allard} hold. Note  for reference below that the tilt excess is controlled by the energy as in the proof of Lemma~\ref{l:EV_interpolation} via
 	\begin{align}
 		E(\bx_0,\rho)
 		=\rho^{-2}\int_{\bB_\rho(\bx_0)\cap \Gamma}1-(e_z\cdot n)^2\dS
 		\overset{\eqref{diffsquares}}\le 2\rho^{-2}\E.\label{eq:tilt_est}
 	\end{align}
  For condition~\ref{it:allard1} we calculate as for \eqref{eq:E_R}, using the divergence theorem on the intersection of an infinite cylinder of radius $\rho>0$ with $\Gamma$ and $\{z=0\}$, that
	\begin{align}
		\int_{\Gamma\cap \set{(x,z):\abs{x-x_0}<\rho}} e_z\cdot n\dS= \pi \rho^2.\label{rhosquare}
	\end{align}
 Consequently for any $\rho>0$, by expanding the ball to a cylinder, there holds
	\begin{align}
		\abs{\babla \one_{\Omega_+}}(\bB_\rho(\bx_0))&
\le\int_{\Gamma\cap \set{(x,z):\abs{x-x_0}<\rho}}1\dS
		 =\int_{\Gamma\cap \set{(x,z):\abs{x}<\rho}}e_z\cdot n\dS+\E\\
\overset{\eqref{rhosquare}}&=\pi \rho^2+\E.\label{eq:haus1}
	\end{align}
 	From here we read off that condition~\ref{it:allard1} holds as long as $\rho$ is large enough so that
	\begin{align}\label{eq:allard1}
		\rho^{-2}\E\le (1-2\alpha)\pi,\quad\text{i.e.,}\quad \rho\geq \sqrt{\frac{\E}{(1-2\alpha)\pi}}.
	\end{align} 	
 	At the  same time, the estimate \eqref{eq:tilt_est} guarantees condition \ref{it:allard2} as long as
 	\begin{align}\label{eq:allard2}
 		\rho^{-2}\E\le \frac 12 \eps_\alpha,\quad\text{i.e.,}\quad \rho\geq \sqrt{\frac{2\E}{\eps_\alpha}}.
 	\end{align}
 Finally, restricting $\eps_1$ from Proposition~\ref{prop:2} to $\eps_1\le 1$, Lemma~\ref{l:H_trace} yields \ref{it:allard3} for all $\rho$ small enough so that
 	\begin{align}\label{eq:allard3}
 		2C\rho D\le \eps_\alpha^2,
 	\end{align}
 	where $C$ is the implicit constant in \eqref{hbound}.
 	Choosing
 \begin{align}
 \rho=\bra{\frac{\E}{D}}^{\frac 13}\label{rhob}
 \end{align}
  and $\E D^2\leq \eps_1$ for $\eps_1>0$ small enough delivers \eqref{eq:allard1}, \eqref{eq:allard2}, and \eqref{eq:allard3}.

 For this radius $\rho$, the surface $\Gamma\cap \bB_{\gamma\rho}(\bx_0)$ is a graph over the disk $B_{\gamma\rho}(x_0)$. Since $\bx_0\in \Gamma$ was arbitrary, $\Gamma$ is locally a graph over $\R^2$. We consider the set
\begin{align*}
 	M=\set{x\in \R^2: \#(\Gamma\cap (\set{x}\times \R))=1}.
\end{align*} 	
Note that $\#(\Gamma\cap (\set{x}\times \R))\ge 1$ everywhere. Since $\E<\infty$, the set $M$ is not empty. Because of the local graph property, $M$ and its complement $M^c$ are open, and hence $M=\R^2$ and $\Gamma$ is given by a global Lipschitz graph function $h$.
 Finally, the bound from Theorem~\ref{thm:allard} in combination with \eqref{eq:tilt_est} and Lemma~\ref{l:H_trace} yields
 	\begin{align}
 		\norm{\nabla h}_\infty\ls \rho^{-1}\E^{\frac 12}+\rho^{\frac 12}D^{\frac 12}\overset{\eqref{rhob}}=2 (\E D^2)^{\frac 16}.
 	\end{align}
\end{proof}

\subsection{Proof of Lemma~\ref{l:H_trace}}\label{ss:lemma41}

We capitalize on the fact that $H$ is the trace of $f$ and employ the following Meyers-Ziemer trace estimate, contained in
 {\cite[Theorem 4.7]{MZ}}; see also {\cite[Theorem 5.12.4]{Zie}} and \cite[p. 386]{Sch}. (We again formulate the results in the BV setting.)
\begin{lemma}[Meyers-Ziemer trace estimate]\label{l:ziemer}
Let $\Omega_+$ be a set with locally finite perimeter and $\Gamma=\sppt \babla \one_{\Omega_+}$. Let
\begin{align}
	M(\Omega_+)\coloneqq\sup_{\bx\in \R^{3}, \rho>0}\rho^{-2}\abs{\babla \one_{\Omega_+}}(\bB_\rho(\bx)).
\end{align}
	If $M(\Omega_+)<\infty$, then for all $\varphi\in C^1_0(\R^{3})$ there holds
	\begin{align}
		\abs{\int_\Gamma \varphi \dS}\ls M(\Omega_+)\norm{\nabla \varphi}_{1}.
	\end{align}

\end{lemma}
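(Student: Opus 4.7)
The plan is to reduce the trace inequality to a purely geometric bound of the form $|\babla \one_{\Omega_+}|(E)\lesssim M(\Omega_+)\,P(E)$ for bounded open sets $E\subset \R^3$ of finite perimeter, and then to close by coarea. Without loss of generality we may take $\varphi\geq 0$ (otherwise split $\varphi=\varphi^+-\varphi^-$ and apply the estimate to each part). Since $\varphi\in C^1_0(\R^3)$, for a.e.\ $t>0$ the superlevel set $\{\varphi>t\}$ is bounded, open, and has $C^1$ boundary of finite $\Haus^2$-measure by Sard's theorem. Layer cake for the measure $|\babla\one_{\Omega_+}|$ and the standard coarea formula for $\varphi$ then give
\begin{align*}
\int_\Gamma \varphi\,\dS = \int_0^\infty |\babla \one_{\Omega_+}|(\{\varphi>t\})\,\dt, \qquad \norm{\nabla \varphi}_1 = \int_0^\infty P(\{\varphi>t\})\,\dt,
\end{align*}
so it suffices to prove the geometric inequality $|\babla \one_{\Omega_+}|(E)\ls M(\Omega_+)\,P(E)$ for every bounded open $E$ of finite perimeter, and then integrate in $t$.

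The geometric inequality follows from a Vitali covering argument driven by the relative isoperimetric inequality on balls. For each $\bx\in E$, because $|E|<\infty$ and a.e.\ point of $E$ has Lebesgue density one, there is a radius $r(\bx)>0$ with $|E\cap \bB_{r(\bx)}(\bx)|=\tfrac12|\bB_{r(\bx)}(\bx)|$. At this balanced scale the relative isoperimetric inequality on the ball gives $P(E,\bB_{r(\bx)}(\bx))\gtrsim r(\bx)^2$. The $5r$-Vitali covering lemma extracts a countable pairwise disjoint subfamily $\{\bB_{r_i}(\bx_i)\}$ whose fivefold dilates still cover $E$. Combining the hypothesis $|\babla \one_{\Omega_+}|(\bB_{5r_i}(\bx_i))\le 25\,M(\Omega_+)r_i^2$ with the isoperimetric lower bound on $r_i^2$ and the disjointness of the inner balls,
\begin{align*}
|\babla \one_{\Omega_+}|(E) \le \sum_i |\babla \one_{\Omega_+}|(\bB_{5 r_i}(\bx_i)) \ls M(\Omega_+)\sum_i P(E, \bB_{r_i}(\bx_i)) \le M(\Omega_+)\,P(E),
\end{align*}
which is exactly the required bound.

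The main obstacle is routine bookkeeping: one must verify that $r\mapsto |E\cap \bB_r(\bx)|$ is continuous so that the balancing radius $r(\bx)$ is actually attained (immediate since $E$ is Borel with finite measure, density one at a.e.\ point, and density tending to zero as $r\to\infty$); that the relative isoperimetric constant on a ball in $\R^3$ is scale-invariant (true by dilation); and that a.e.\ level of $\varphi$ is regular in the coarea sense (Sard for $C^1$ maps). An alternative, coarea-free route would represent $\varphi(\bx)\sim \int (\bx-\by)\cdot \nabla\varphi(\by)/|\bx-\by|^{3}\,\mathrm{d}\by$ as a Riesz potential, apply Fubini, and control the Riesz potential $\int d|\babla\one_{\Omega_+}|(\bx)/|\bx-\by|^{2}$ pointwise by a dyadic-annular splitting that exploits $|\babla\one_{\Omega_+}|(\bB_\rho)\le M(\Omega_+)\rho^{2}$. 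Both roads depend only on the two-dimensional upper density hypothesis encoded in $M(\Omega_+)$.
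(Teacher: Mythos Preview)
The paper does not supply its own proof of this lemma: it is quoted as a black box from Meyers--Ziemer \cite{MZ} and Ziemer \cite{Zie}. Your argument therefore goes beyond the paper by giving a self-contained derivation, and the route you take---coarea reduction to the geometric inequality $|\babla\one_{\Omega_+}|(E)\lesssim M(\Omega_+)P(E)$, then Vitali at the balanced isoperimetric scale---is clean and correct. The alternative Riesz-potential route you mention is closer in spirit to the original Meyers--Ziemer argument.

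One minor wrinkle: Sard's theorem for $C^1$ maps $\R^3\to\R$ does not hold in the classical form (one needs $C^{3}$ for that), so your justification of regular level sets is misstated. This is harmless, however, since the argument does not actually need smooth level sets: the coarea formula for Lipschitz (or $BV$) functions already gives $\norm{\nabla\varphi}_1=\int_0^\infty P(\{\varphi>t\})\,\dt$ with $\{\varphi>t\}$ open, bounded, and of finite perimeter for a.e.\ $t$, which is all the geometric step requires. Similarly, the splitting $\varphi=\varphi^+-\varphi^-$ takes you out of $C^1$, but $\varphi^\pm$ remain Lipschitz with $\{\varphi^+>t\}=\{\varphi>t\}$ for $t>0$, so nothing is lost.
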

To obtain a uniform bound on $M(\Omega_+)$, we will use the following slight adaption of a monotonicity formula of Sch\"atzle, cf.\ {\cite[Lemma 2.1]{Sch}, whose proof follows \cite[Section 17]{Sim}.
\begin{lemma}[Schätzle's monotonicity formula]\label{l:schaetzle}
	There is a constant $C<\infty$ with the following property. Let $\Omega_+$ be a set with locally finite perimeter and $\Gamma=\sppt \babla \one_{\Omega_+}$, and let $f\in \Hd^1(\R^{3})$ satisfy \eqref{eq:weak_H}. Then, for any $\bx\in \R^3$, the function

	\begin{align}
		\rho\mapsto \rho^{-2}\abs{\babla \one_{\Omega_+}}( \bB_\rho(\bx))+C\rho^{\frac 12}\norm{\nabla f}_2
	\end{align}
	is nondecreasing.
\end{lemma}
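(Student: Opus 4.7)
The plan is to adapt Simon's classical monotonicity formula for varifolds with prescribed mean curvature (cf.\ \cite[\S17]{Sim}) to the weak BV setting of \eqref{eq:weak_H}, in which $H$ is encoded only implicitly via the $\dot H^1$ extension $f$. The key point is that the ``error'' produced by the right-hand side of \eqref{eq:weak_H} scales in a way that is exactly absorbed by $C\rho^{1/2}\norm{\nabla f}_2$.

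\textbf{Step 1 (First variation with a radial test field).} Fix $\bx_0\in \R^3$, write $r=|\bx-\bx_0|$, and for $\rho>0$ pick a smooth, decreasing cut-off $\phi_\rho:[0,\infty)\to [0,1]$ with $\phi_\rho\equiv 1$ on $[0,\rho/2]$ and $\sppt\phi_\rho\subset [0,\rho]$. Apply \eqref{eq:weak_H} to $\xi(\bx)=\phi_\rho(r)(\bx-\bx_0)$. A direct computation (using $\mathrm{tr}(\Id-n\otimes n)=2$ on the $2$-dimensional surface $\Gamma$) gives
\begin{align}
\text{LHS}=2\!\int_\Gamma\!\phi_\rho(r)\dS+\int_\Gamma\! r\phi_\rho'(r)\dS-\int_\Gamma\!\phi_\rho'(r)\frac{|(\bx-\bx_0)\cdot n|^2}{r}\dS,
\end{align}
while on the right-hand side, since $\Div((\bx-\bx_0)\phi_\rho(r))=3\phi_\rho(r)+r\phi_\rho'(r)$,
\begin{align}
\text{RHS}=\int_{\Omega_+}\!\bigl[3f\phi_\rho(r)+rf\phi_\rho'(r)+\phi_\rho(r)\,\babla f\cdot(\bx-\bx_0)\bigr]\bdx.
\end{align}

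\textbf{Step 2 (Passage to a sharp cut-off and monotonicity identity).} Let $\phi_\rho\nearrow \one_{[0,\rho]}$. Standard coarea-type arguments (justified by monotonicity of $\rho\mapsto |\babla\one_{\Omega_+}|(\bB_\rho(\bx_0))=:A(\rho)$ and absolute continuity of the $\bdx$-integrals in $\rho$) convert the $\phi_\rho'$-terms into $\rho$-derivatives along $\partial\bB_\rho$. After dividing through by an appropriate power of $\rho$ and rearranging, one obtains for a.e.\ $\rho>0$ the classical pre-monotonicity identity
\begin{align}
\frac{\dd}{\dd\rho}\!\left[\rho^{-2}A(\rho)\right]=\rho^{-2}\frac{\dd}{\dd\rho}\!\int_{\Gamma\cap \bB_\rho(\bx_0)}\frac{|(\bx-\bx_0)\cdot n|^2}{r^2}\dS+\mathcal{R}(\rho),
\end{align}
where the first term on the right is nonnegative (Simon's ``tilt density'') and the remainder $\mathcal R(\rho)$ inherits the structure of the RHS in Step~1:
\begin{align}
\mathcal R(\rho)=-\rho^{-3}\!\int_{\Omega_+\cap \bB_\rho(\bx_0)}\!\bigl[3f+\babla f\cdot(\bx-\bx_0)\bigr]\bdx.
\end{align}

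\textbf{Step 3 (Control of the remainder via Sobolev embedding).} Using Sobolev embedding $\Hd^1(\R^3)\hookrightarrow L^6(\R^3)$ together with H\"older's inequality, we estimate
\begin{align}
\left|\int_{\bB_\rho(\bx_0)}f\bdx\right|\le |\bB_\rho|^{5/6}\norm{f}_{L^6(\R^3)}\ls \rho^{5/2}\norm{\babla f}_{L^2(\R^3)},
\end{align}
and likewise, by Cauchy--Schwarz,
\begin{align}
\left|\int_{\bB_\rho(\bx_0)}\!\babla f\cdot(\bx-\bx_0)\bdx\right|\le \rho\,|\bB_\rho|^{1/2}\norm{\babla f}_{L^2(\bB_\rho)}\ls \rho^{5/2}\norm{\babla f}_{L^2(\R^3)}.
\end{align}
Both contributions to $\mathcal R(\rho)$ therefore satisfy $|\mathcal R(\rho)|\le C\rho^{-1/2}\norm{\babla f}_{L^2(\R^3)}$.

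\textbf{Step 4 (Monotonicity).} Discarding the nonnegative tilt term, Step~2 and Step~3 combine to yield the differential inequality
\begin{align}
\frac{\dd}{\dd\rho}\!\left[\rho^{-2}|\babla\one_{\Omega_+}|(\bB_\rho(\bx_0))\right]\ge -C\rho^{-1/2}\norm{\babla f}_{L^2(\R^3)},
\end{align}
which is equivalent (after integrating the $-C\rho^{-1/2}$ factor and absorbing the resulting $2C\rho^{1/2}$ into the formula) to the monotonicity of
\begin{align}
\rho\mapsto \rho^{-2}|\babla\one_{\Omega_+}|(\bB_\rho(\bx_0))+2C\rho^{1/2}\norm{\babla f}_{L^2(\R^3)},
\end{align}
which is the claim (up to relabeling $2C\mapsto C$).

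The main obstacle is Step~2: justifying the limit $\phi_\rho\to \one_{[0,\rho]}$ in the BV framework without assuming any regularity of $\Gamma$ beyond local finiteness of the perimeter. This is handled exactly as in Simon's treatment via the coarea formula and the a.e.\ differentiability of the monotone function $\rho\mapsto A(\rho)$; the presence of the $f$-terms causes no additional difficulty because they are absolutely continuous in $\rho$. The other steps are essentially bookkeeping: Step~1 is a textbook computation, and Step~3 is the only place where the specific structure of the problem (namely, that the weak mean curvature is encoded by $f\in \Hd^1(\R^3)$ in $d+1=3$) enters, and it does so in the critical scaling that produces the clean $\rho^{1/2}\norm{\nabla f}_2$ correction.
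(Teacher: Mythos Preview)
The paper does not give a proof of this lemma; it records it as a slight adaptation of \cite[Lemma~2.1]{Sch}, whose proof in turn follows \cite[\S17]{Sim}. Your strategy---test \eqref{eq:weak_H} with the radial field $\xi=\phi_\rho(r)(\bx-\bx_0)$, rearrange into a differential inequality for the density ratio, and control the volume error via the Sobolev embedding $\Hd^1(\R^3)\hookrightarrow L^6(\R^3)$---is exactly the route those references take, and the scaling you identify in Step~3 is the key point.

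There is, however, one genuine gap. In passing from Step~1 to Step~2 you silently drop the contribution of the term $\int_{\Omega_+} r f\,\phi_\rho'(r)\,\bdx$ from $\mathcal R(\rho)$. Writing $r\phi_\rho'=-\rho\,\partial_\rho\phi_\rho$ (or taking the sharp-cutoff limit), this term produces an additional $\rho^{-2}K'(\rho)$ on the right-hand side, where $K(\rho)=\int_{\Omega_+\cap\bB_\rho(\bx_0)} f\,\bdx$. A pointwise bound $|\rho^{-2}K'(\rho)|\lesssim \rho^{-1/2}\norm{\babla f}_2$ is \emph{not} available, since it would require control of $f$ on a single sphere $\partial\bB_\rho$. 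The fix is to integrate by parts in $\rho$: write
\[
\rho^{-2}K'(\rho)=\partial_\rho\bigl(\rho^{-2}K(\rho)\bigr)+2\rho^{-3}K(\rho),
\]
absorb $-\rho^{-2}K(\rho)$ into the would-be monotone quantity, and observe that your H\"older--Sobolev estimate already gives $|\rho^{-2}K|\lesssim \rho^{1/2}\norm{\babla f}_2$ and $|\rho^{-3}K|\lesssim \rho^{-1/2}\norm{\babla f}_2$. After this rearrangement, Step~4 goes through unchanged and yields the stated monotonicity.
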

\begin{proof}[Proof of Lemma~\ref{l:H_trace}]
	Using density we apply  the Meyers-Ziemer estimate from Lemma~\ref{l:ziemer} with $\varphi=f^4$ to obtain
	\begin{align}
		\norm{f}^4_{L^4(\Gamma)}&\ls M(\Omega_+)\norm{\nabla(f^4)}_1\ls M(\Omega_+)\norm{f^3 \nabla f}_1\ls M(\Omega_+) \norm{\nabla f}_2\norm{f}_6^3\ls M(\Omega_+) \norm{\nabla f}_2^4\\
		&\ls M(\Omega_+) D^2, \label{eq:H_bound}
	\end{align}
	where we used Sobolev embedding in the penultimate step. Since $f$ has a trace in $L^4$ on $\Gamma$, an application of the divergence theorem to the right-hand side of \eqref{eq:weak_H} reveals that $f=H$ on $\Gamma$ in $L^4(\Gamma)$. To deduce~\eqref{hbound} it hence suffices to show
	\begin{align}\label{eq:M_bound}
		M(\Omega_+)\lesssim 1+(\E D^2)^{\frac 15}.
	\end{align}

To this end, we observe that for any fixed $R>0$, Schätzle's monotonicity formula (Lemma~\ref{l:schaetzle}) implies  for $\rho\leq R$ that
	\begin{align}
		\rho^{-2}\abs{\babla \one_{\Omega_+}}( \bB_\rho(\bx))&\le R^{-2}\abs{\babla \one_{\Omega_+}}( \bB_R(\bx))+CR^{\frac 12}D^{\frac 12}\\
		\overset{\eqref{eq:haus1}}&{\le} \pi+R^{-2}\E+CR^{\frac 12}D^{\frac 12}\lesssim 1+(\E D^2)^\frac{1}{5},\label{eq:haus_D}
	\end{align}
where in the last step we have optimized with
	\begin{align}
		R=\bra{\frac{\E}{D^{\frac 12}}}^{\frac 25}.
	\end{align}
It remains to control the supremum over larger radii, which is achieved by observing that
 \eqref{eq:haus1}  for $\rho\ge R$ gives
	\begin{align}\label{eq:haus_E}
		\rho^{-2}\abs{\babla \one_{\Omega_+}}( \bB_\rho(\bx))\le \pi+R^{-2}\E\lesssim 1+(\E D^2)^\frac{1}{5}.
	\end{align}
The combination of \eqref{eq:haus_D} and \eqref{eq:haus_E} establishes~\eqref{eq:M_bound}.
\end{proof}

\section{Relaxation rates in the graph setting}\label{S:relax2}

In this section, we work in the graph setting with small Lipschitz constant and hence,
by the last part of Hypothesis \textbf{(H)}, assume that all quantities are smooth. In particular, equations \eqref{eq:f} and \eqref{eq:Vel} hold pointwise.

Because $\Gamma$ is given as the graph of a function as in \eqref{eq:graph}, the excess mass $\V$ reduces to the $L^1$-norm of the height function
\begin{align}
 \V&=\int_{\R^d}\abs{h}\dx\label{eq:Vh}.
 \end{align}
Moreover, the following geometric quantities can also be expressed in terms of $h$ as
\begin{align}
  n&=\frac{(-\nabla h,1)}{\sqrt{1+\abs{\nabla h}^2}},\label{eq:n}\\
  H&=\Div\frac{\nabla h}{\sqrt{1+\abs{\nabla h}^2}}=\frac{1}{\sqrt{1+\abs{\nabla h}^2}}\bra{\Delta h-\frac{\nabla^2h:(\nabla h\otimes \nabla h)}{1+\abs{\nabla h}^2}},\label{eq:H}\\
  \vel&=\frac{h_t}{\sqrt{1+\abs{\nabla h}^2}}.\label{eq:norm_vel}
\end{align}
Here the left-hand side quantities are only defined on $\Gamma$ and are hence evaluated at $(x,h(x))$ while the right-hand side quantities are defined on $\R^d$ and are evaluated at $x$. We will mildly abuse notation by allowing context to make clear whether $H,V$ take arguments from $\R^d$ or $\Gamma$. Because of the Lipschitz bound \eqref{eq:Lipschitz} on $h$, within the Lipschitz regime we have
\[\dS=\sqrt{1+\abs{\nabla h}^2}\dx\sim \dx\]
and hence we can
compare quantities on $\Gamma$ to quantities on $\R^d$. A direct consequence of this and \eqref{eq:n} is that the energy can be expressed as
\begin{align}
  \E&=\int_{\R^d}\sqrt{1+\abs{\nabla h}^2}-1\dx\label{eq:Eh}.
\end{align}

A second advantage of the Lipschitz setting is that we can deduce nonlinear estimates from the linearized ones and then take advantage of the (nonlinear) gradient flow structure of the system.

Within the Lipschitz regime, the rough plan is as follows: A simple differential inequality (Lemma~\ref{l:ED2}) implies that the solution remains trapped in the Lipschitz regime. The main interpolation estimate (Proposition~\ref{prop:EED}) and the gradient flow structure imply that as long as the $L^1$ norm remains controlled, the natural algebraic decay estimates hold. A duality argument verifies that the $L^1$ control is indeed guaranteed; see Subsection~\ref{ss:dual}. The pieces are assembled via a buckling argument in Subsection~\ref{ss:main}.

\paragraph{Fractional and negative spaces, notation.}
We will work with the spaces $H^s$ for $s=\pm \frac 12, \pm 1$. For $s=\frac 12$, we define $\dot{H}^{\frac{1}{2}}(\R^d)$ and $\dot{H}^{\frac{1}{2}}(\Gamma)$ via trace, i.e., the space consists of traces of $\dot H^1(\R^{d+1})$ functions and the norm is given by the Dirichlet norm of the corresponding harmonic extension (or equivalently the infimum of the Dirichet norm for all extensions).  The negative spaces are defined by duality.

As in the introduction, we use $\abs{\nabla}^\alpha, \alpha\in \R$ in the Fourier multiplier sense, i.e., if $\F$ and $\F^{-1}$ are the Fourier and the inverse Fourier transform, respectively, then

\begin{align*}
\abs{\nabla}^\alpha g=\F^{-1}\bra{\abs{k}^\alpha\F g}.
\end{align*}
Within our small Lipschitz setting, the trace and Fourier definitions are equivalent in the sense that $\norm{g}_{\dot{H}^{\pm 1/2}(\R^d)}\sim \norm{\abs{\nabla}^{\pm 1/2}g}_{2}$ (see e.g. \cite[Proposition A.1]{COW}).
Also the Sobolev norms on $\R^d$ and $\Gamma$ are equivalent; i.e., for a function $w_\Gamma:\Gamma\to \R$ and $w:\R^d\to \R$ defined by $w(x)=w_\Gamma(x,h(x))$, we have
\begin{align}
\norm{ w_\Gamma}_{\dot H^s(\Gamma)}\sim \norm{w}_{H^s(\R^d)}
\end{align}
for $s=\pm \frac 12, \pm 1$. For $s=\frac 12$ this follows from the fact that the transformation $z'=z-h(x)$ maps $\Omega_+$ to the half-space while the the Dirichlet energy of the corresponding extensions remains comparable (since $h$ is Lipschitz). For $s=1$ it is an easy computation. For the negative spaces it follows by duality. We will make regular use of this fact.

We continue to denote norms on the interface explicitly and suppress the domain on $\R^d$, e.g.,
\begin{align*}
 \norm{g}_{\dot H^{-\frac{1}{2}}(\Gamma)} , \qquad \norm{g}_{\dot H^{-\frac{1}{2}}},
\end{align*}
except when $\R^d$ is made explicit for emphasis.

\subsection{Algebraic relationships}\label{ss:alg}

The aim of this subsection is to relate the nonlinear quantities tied to the MS evolution to linear ones (for which we can take advantage of standard interpolation estimates and Fourier techniques). All statements are for general dimension $d\ge 1$ unless otherwise indicated.

As a first easy consequence of the Lipschitz bound, we obtain that $\E$ scales like the Dirichlet energy of $h$, and  $D$ controls a negative norm of the normal velocity $\vel$.  A more elementary proof of \eqref{eq:nablaf_bdry_est} in the case $d=1$ is contained in \cite[Lemma 3.1 and proof of Lemma 4.1]{COW}.

\begin{lemma}\label{l:V_D}
Under the condition $\norm{\nabla h}_\infty\le 1$, there holds
\begin{align}\label{eq:E_sim_L2}
\E\sim\norm{\nabla h}_2^2,
\end{align}
and
\begin{align}\label{eq:nablaf_bdry_est}
\norm{\vel}_{\Hd^{-\frac 12}(\Gamma)}^2\ls \norm{\nabla f_+\cdot n}_{\Hd^{-\frac 12}(\Gamma)}^2+\norm{\nabla f_-\cdot n}_{\Hd^{-\frac 12}(\Gamma)}^2\ls D.
\end{align}
\end{lemma}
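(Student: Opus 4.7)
The first relation~\eqref{eq:E_sim_L2} is essentially pointwise. Starting from the graph representation~\eqref{eq:Eh}, I would write
\begin{align}
\sqrt{1+\abs{\nabla h}^2}-1=\frac{\abs{\nabla h}^2}{\sqrt{1+\abs{\nabla h}^2}+1},
\end{align}
and observe that under $\norm{\nabla h}_\infty\leq 1$ the denominator is bounded between $2$ and $1+\sqrt{2}$, so the integrand is comparable to $\abs{\nabla h}^2$. Integration over $\R^d$ gives~\eqref{eq:E_sim_L2}.

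For~\eqref{eq:nablaf_bdry_est}, the first inequality is just the jump identity $\vel=-[\babla f\cdot n]=\nabla f_-\cdot n-\nabla f_+\cdot n$ from~\eqref{eq:Vel}, combined with the triangle inequality in the Hilbert space $\dot H^{-1/2}(\Gamma)$.

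For the second inequality, I would use the duality characterization $\norm{\nabla f_\pm\cdot n}_{\dot H^{-1/2}(\Gamma)}=\sup\{\int_\Gamma(\nabla f_\pm\cdot n)g\,\dd S\colon \norm{g}_{\dot H^{1/2}(\Gamma)}\leq 1\}$. Given such a test function $g$, let $\bar g$ be its harmonic extension to $\Omega_+$ (respectively $\Omega_-$). Since $f$ is harmonic in $\Omega_+$ (from~\eqref{eq:f}) and the \emph{outward} normal on $\partial\Omega_+=\Gamma$ is $-n$ (by our orientation convention), Green's identity yields
\begin{align}
\int_\Gamma(\nabla f_+\cdot n)\,g\,\dd S = -\int_{\Omega_+}\babla f\cdot\babla\bar g\,\bdx,
\end{align}
and analogously on $\Omega_-$. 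Cauchy–Schwarz then gives
\begin{align}
\abs{\int_\Gamma(\nabla f_+\cdot n)\,g\,\dd S}\le \norm{\babla f}_{L^2(\Omega_+)}\,\norm{\babla \bar g}_{L^2(\Omega_+)}\le D^{1/2}\,\norm{\babla \bar g}_{L^2(\Omega_+)}.
\end{align}

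The remaining point is to control $\norm{\babla \bar g}_{L^2(\Omega_+)}$ by $\norm{g}_{\dot H^{1/2}(\Gamma)}$. Since $\bar g$ is the \emph{Dirichlet-harmonic extension to $\Omega_+$}, it minimizes the $\Omega_+$-Dirichlet energy among all extensions of $g$; under the Lipschitz assumption $\norm{\nabla h}_\infty\le 1$, the change of variables $z'=z-h(x)$ maps $\Omega_+$ to the upper half-space with bi-Lipschitz distortion of order one, so that the minimum Dirichlet energy on $\Omega_+$ is comparable to the one on $\R^d\times\{z>0\}$, which in turn equals $\norm{g}_{\dot H^{1/2}(\Gamma)}^2$ by the trace-definition of the fractional norm recalled in the preceding paragraphs. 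Taking the supremum over admissible $g$ and summing the $\pm$ contributions delivers the second bound in~\eqref{eq:nablaf_bdry_est}. The only subtle step is the last one, namely the equivalence between the one-sided Dirichlet energy of the harmonic extension to $\Omega_+$ and the intrinsic $\dot H^{1/2}(\Gamma)$ norm — and this is exactly the norm equivalence already invoked in the paragraph on fractional spaces.
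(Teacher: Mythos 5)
Your proposal is correct and follows essentially the same route as the paper: the same algebraic identity $\abs{\nabla h}^2=(\sqrt{1+\abs{\nabla h}^2}-1)(\sqrt{1+\abs{\nabla h}^2}+1)$ for \eqref{eq:E_sim_L2}, and for \eqref{eq:nablaf_bdry_est} the same duality/divergence-theorem/Cauchy--Schwarz argument, with the bi-Lipschitz comparability of the one-sided Dirichlet energy to $\norm{\cdot}_{\dot H^{1/2}(\Gamma)}$ that you spell out being exactly the norm equivalence the paper records in its paragraph on fractional spaces.
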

\begin{proof}
We deduce \eqref{eq:E_sim_L2} directly from the identity
\begin{align}\label{eq:linearizeE}
\abs{\nabla h}^2=\bra{\sqrt{1+\abs{\nabla h}^2}-1}\bra{\sqrt{1+\abs{\nabla h}^2}+1}.
\end{align}
The first estimate in \eqref{eq:nablaf_bdry_est} follows from \eqref{eq:Vel}. For the second estimate in \eqref{eq:nablaf_bdry_est}, we test with $\varphi\in\dot H^{\frac 12}(\Gamma)$ and use the divergence theorem (and $\belta f_\pm=0$) to estimate
\begin{align*}
	\abs{\int_\Gamma \varphi \nabla f_\pm\cdot n\dd S}=\abs{\int_{\Omega_\pm} \nabla\varphi \cdot\babla f_\pm\dd S}\le \norm{\nabla f}_2\norm{\nabla \varphi}_2.
\end{align*}
Taking the supremum over all $\varphi$ yields the claim.
\end{proof}

\begin{lemma}\label{l:H}
Assume $\norm{\nabla h}_\infty\le 1$. In any dimension, there holds
\begin{align}
\norm{H}_{\Hd^{-1}(\Gamma)}^2&\ls \E,\label{eq:H_bdry_est_E}\\
\norm{H}_{\Hd^{\frac 12}(\Gamma)}^2&\ls D,\label{eq:H_bdry_est_D}\\
\norm{H}_{L^2(\Gamma)}^2&\ls \bra{\E D^2}^{\frac 13}.\label{eq:H_L2}
\end{align}
\end{lemma}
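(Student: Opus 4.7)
The three bounds naturally split as two endpoint estimates and one interpolation. I would prove \eqref{eq:H_bdry_est_E} from the geometric graph formula for $H$, \eqref{eq:H_bdry_est_D} from the boundary condition $f=H$ on $\Gamma$ together with the trace characterization of $\dot H^{1/2}(\Gamma)$, and \eqref{eq:H_L2} by interpolating between the two. Throughout I would use the norm equivalences $\norm{w}_{\dot H^s(\Gamma)}\sim\norm{w}_{\dot H^s(\R^d)}$ for $s=\pm 1,\pm\tfrac12$ recalled at the start of the section, as well as the area comparison $\dS\sim \dx$, both consequences of $\norm{\nabla h}_\infty\le 1$.

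For \eqref{eq:H_bdry_est_E}, I would identify $H$ on $\Gamma$ with the function $\Div\bigl(\nabla h/\sqrt{1+|\nabla h|^2}\bigr)$ on $\R^d$ via \eqref{eq:H}. Testing against $\varphi\in C_c^\infty(\R^d)$ and integrating by parts gives
\begin{align*}
\Bigl|\int_{\R^d} H\,\varphi\,\dx\Bigr|=\Bigl|\int_{\R^d}\frac{\nabla h}{\sqrt{1+|\nabla h|^2}}\cdot\nabla\varphi\,\dx\Bigr|\le\norm{\nabla h}_2\norm{\nabla\varphi}_2,
\end{align*}
so that $\norm{H}_{\dot H^{-1}(\R^d)}\ls\norm{\nabla h}_2\sim\E^{1/2}$ by \eqref{eq:E_sim_L2}; the norm equivalence transfers this to $\dot H^{-1}(\Gamma)$.

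For \eqref{eq:H_bdry_est_D}, the definition of $\dot H^{1/2}(\Gamma)$ used in the paper is exactly the Dirichlet energy of the harmonic extension of the boundary datum to $\R^{d+1}\setminus\Gamma$. But the potential $f$ from \eqref{eq:f} \emph{is} the two-sided harmonic extension of $H$, so
\begin{align*}
\norm{H}_{\dot H^{1/2}(\Gamma)}^2=\int_{\R^{d+1}}|\babla f|^2\bdx=D.
\end{align*}
This even gives equality, matching \eqref{eq:H_bdry_est_D}.

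Finally, \eqref{eq:H_L2} follows from a standard interpolation inequality between $\dot H^{-1}(\Gamma)$ and $\dot H^{1/2}(\Gamma)$: taking weights $\theta=\tfrac13$ and $1-\theta=\tfrac23$ (solving $-\theta+\tfrac12(1-\theta)=0$) yields
\begin{align*}
\norm{H}_{L^2(\Gamma)}^2\ls\norm{H}_{\dot H^{-1}(\Gamma)}^{2/3}\norm{H}_{\dot H^{1/2}(\Gamma)}^{4/3}\ls\E^{1/3}D^{2/3}=(\E D^2)^{1/3}.
\end{align*}
There is no serious obstacle in the argument; the only delicate point is the bookkeeping for passing between norms on $\Gamma$ and on $\R^d$, which is controlled once and for all by the small-Lipschitz assumption \eqref{eq:Lipschitz} together with the equivalences recalled at the beginning of the section.
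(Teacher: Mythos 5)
Your proposal is correct and follows essentially the same route as the paper: \eqref{eq:H_bdry_est_E} by testing the graph formula \eqref{eq:H} against an $\Hd^1(\R^d)$ function, integrating by parts and applying Cauchy--Schwarz together with \eqref{eq:E_sim_L2}; \eqref{eq:H_bdry_est_D} as an identity from the trace definition of the $\Hd^{1/2}(\Gamma)$ norm, since $f$ is the harmonic extension of $H$; and \eqref{eq:H_L2} by interpolation with exponent $\theta=\tfrac13$. No discrepancies to report.
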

\begin{corollary}
Let $d=2$. Assume $\norm{\nabla h}_\infty\le 1$. Then for any $2\le p\le 4$ there holds
\begin{align}
\norm{H}_{L^p(\Gamma)}&\ls \E^{\frac{4-p}{6p}}D^{\frac{2(p-1)}{3p}}.\label{eq:H_Lp}
\end{align}
\end{corollary}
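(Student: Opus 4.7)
The plan is to interpolate between the endpoint estimates $p=2$ and $p=4$ provided essentially by Lemma~\ref{l:H}. The $p=2$ endpoint is already recorded in~\eqref{eq:H_L2}. For the $p=4$ endpoint, I would invoke the Sobolev embedding $\dot H^{\frac 12}(\R^2)\hookrightarrow L^4(\R^2)$, which holds precisely in ambient dimension $d=2$ (since $\tfrac 12=\tfrac 12-\tfrac 14\cdot\tfrac{d}{1}$ with $d=2$), combined with the equivalence of Sobolev norms on $\R^d$ and on $\Gamma$ in the small-Lipschitz setting (explicitly recorded just before Subsection~\ref{ss:alg}); then~\eqref{eq:H_bdry_est_D} gives
\[
\norm{H}_{L^4(\Gamma)}^2\lesssim \norm{H}_{\dot H^{\frac 12}(\Gamma)}^2 = D.
\]

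With both endpoints in hand, the interior exponents $2\le p\le 4$ are reached by a standard H\"older interpolation: writing $\tfrac 1p=\tfrac{\theta}{2}+\tfrac{1-\theta}{4}$ yields $\theta=\tfrac{4-p}{p}$, and therefore
\[
\norm{H}_{L^p(\Gamma)}\le\norm{H}_{L^2(\Gamma)}^{\theta}\norm{H}_{L^4(\Gamma)}^{1-\theta}\lesssim\bigl(\E^{\frac{1}{6}}D^{\frac{1}{3}}\bigr)^{\theta}\bigl(D^{\frac{1}{2}}\bigr)^{1-\theta}=\E^{\frac{\theta}{6}}D^{\frac{3-\theta}{6}}.
\]
Substituting $\theta=\tfrac{4-p}{p}$ gives the exponents $\tfrac{4-p}{6p}$ on $\E$ and $\tfrac{3p-(4-p)}{6p}=\tfrac{2(p-1)}{3p}$ on $D$, matching~\eqref{eq:H_Lp} exactly.

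There is essentially no obstacle: the two endpoint inputs are already available (one from Lemma~\ref{l:H}, the other from the critical Sobolev embedding tied to $d=2$), and the interpolation is a one-line H\"older computation. The only point deserving mild care is the transfer of the Sobolev embedding from $\R^2$ to the hypersurface $\Gamma$; this is handled by the norm-equivalence $\norm{g}_{\dot H^{s}(\Gamma)}\sim \norm{g}_{\dot H^{s}(\R^d)}$ for $s=\pm\tfrac 12,\pm 1$ stated earlier, which in turn relies only on the Lipschitz bound $\norm{\nabla h}_\infty\le 1$. One could also verify optimality of the exponents as a sanity check: setting $p=2$ recovers~\eqref{eq:H_L2} and setting $p=4$ recovers $\norm{H}_{L^4(\Gamma)}\lesssim D^{1/2}$, which is precisely the Sobolev bound used as endpoint.
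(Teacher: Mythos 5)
Your proof is correct and follows essentially the same route as the paper: the $L^4$ endpoint via the Sobolev embedding $\dot H^{1/2}(\R^2)\hookrightarrow L^4(\R^2)$ together with the $\Gamma$--$\R^2$ norm equivalence and \eqref{eq:H_bdry_est_D}, then interpolation against the $L^2$ bound \eqref{eq:H_L2}. The exponent bookkeeping checks out.
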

\begin{proof}
By embedding (comparing to norms on $\R^d$) we have $\norm{H}_{L^4(\Gamma)}\ls \norm{H}_{\Hd^{\frac 12}(\Gamma)}$. Then \eqref{eq:H_Lp} follows by interpolation between this inequality and \eqref{eq:H_L2}, and by using \eqref{eq:H_bdry_est_D}, where we have again used the equivalence of the norms on $\Gamma$ and $\R^d$ and the Fourier representation, for which the interpolation is readily available.
\end{proof}
\begin{proof}[Proof of Lemma~\ref{l:H}]
Estimate \eqref{eq:H_L2} follows by interpolation between \eqref{eq:H_bdry_est_E} and \eqref{eq:H_bdry_est_D}. Estimate \eqref{eq:H_bdry_est_D} is a Sobolev trace inequality for $f$. Inequality \eqref{eq:H_bdry_est_E} follows from testing equation \eqref{eq:H} with an $\Hd^1(\R^d)$ function, integration by parts and an application of the Cauchy-Schwarz inequality (cf. \cite[Lemma 3.1 (3.3)]{COW} for the case $d=1$).
\end{proof}

We next turn to $p$-norm control of the full Hessian of $h$. Our first observation is that in any dimension, one can directly deduce $L^2$ control of the Hessian from \eqref{eq:H} (see \eqref{eq:L2_h_H} below). In $d=1$, this yields control of $\norm{\nabla h}_\infty$ in terms of $\E$ and $D$ (cf. Corollary~\ref{cor:Lip} below and \cite[Lemma 3.2]{COW}). In $d=2$ the norm $\norm{\nabla^2 h}_2$ \emph{just fails} to control $\norm{\nabla h}_\infty$. Although we establish control of $\norm{\nabla h}_\infty$ in $d=2$ by heavier machinery in Proposition~\ref{prop:2}, we can also deduce it in an elementary way from higher $p$-integrability of the Hessian (see Corollary~\ref{cor:Lip}). We also use the $p>2$ integrability in Lemma~\ref{l:D_decay} below to control the time change of the dissipation, which we need for our duality argument for large times.
\begin{lemma}[Control of the Hessian by mean curvature]\label{l:pnorm_D}
Assume
$\norm{\nabla h}_\infty\le 1$. In any dimension, there holds
\begin{align}\label{eq:L2_h_H}
\norm{\nabla^2 h}_2\ls \norm{H}_2,
\end{align}
and there exists $\delta>0$ such that for any $p\in [2,2+\delta)$, there holds
\begin{align}\label{eq:pnorm_D}
\norm{\nabla^2h}_p\lesssim \norm{H}_p.
\end{align}
\end{lemma}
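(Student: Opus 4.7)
The plan is to isolate $\Delta h$ in the mean-curvature formula~\eqref{eq:H} and then close by absorption, using Plancherel at $p=2$ and a Calder\'on--Zygmund / Meyers-type perturbation for $p$ slightly above $2$. Rearranging~\eqref{eq:H} yields
\[
\Delta h \;=\; \sqrt{1+\abs{\nabla h}^2}\,H \;+\; \frac{\nabla^2 h:(\nabla h\otimes \nabla h)}{1+\abs{\nabla h}^2},
\]
and under the hypothesis $\norm{\nabla h}_\infty\le 1$ the algebraic bound $\abs{\nabla h}^2/(1+\abs{\nabla h}^2)\le \tfrac12$ delivers the pointwise estimate $\abs{\Delta h}\le \sqrt{2}\,\abs{H}+\tfrac12\abs{\nabla^2 h}$.

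For~\eqref{eq:L2_h_H}, I would invoke the exact Plancherel identity
\[
\norm{\nabla^2 h}_2^2 \;=\; \int \sum_{i,j} k_i^2 k_j^2\,\abs{\hat h(k)}^2\dk \;=\; \int \abs{k}^4\,\abs{\hat h(k)}^2\dk \;=\; \norm{\Delta h}_2^2,
\]
take $L^2$ norms in the pointwise bound, and absorb $\tfrac12\norm{\nabla^2 h}_2$ into the left-hand side. This gives~\eqref{eq:L2_h_H}.

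For~\eqref{eq:pnorm_D}, I would represent each second partial derivative as a Calder\'on--Zygmund (iterated Riesz-transform) transform of the Laplacian via the Fourier multiplier $k_ik_j/\abs{k}^2$, which is bounded on $L^p$ for every $1<p<\infty$. Writing $\norm{\nabla^2 h}_p\le C_p \norm{\Delta h}_p$, the Plancherel computation above gives $C_2=1$, and by Riesz--Thorin interpolation between $p=2$ and some fixed $p_0\in(2,\infty)$ the map $p\mapsto C_p$ is continuous at $2$ with limit~$1$. Hence there exists $\delta>0$ with $C_p/2<1$ for all $p\in[2,2+\delta)$. Taking $L^p$ norms in the pointwise bound then yields
\[
\norm{\nabla^2 h}_p \;\le\; C_p\sqrt{2}\,\norm{H}_p \;+\; \frac{C_p}{2}\,\norm{\nabla^2 h}_p,
\]
and absorption delivers~\eqref{eq:pnorm_D}.

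The main (and standard) obstacle, already flagged in the introduction as the ``Meyers-type perturbative argument'', is precisely the continuity of the Calder\'on--Zygmund constant $C_p$ at $p=2$, which keeps the perturbation coefficient $C_p/2$ strictly below $1$ when $p$ is increased slightly past $2$ so that the absorption step can be carried out.
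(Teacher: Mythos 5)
Your argument is essentially identical to the paper's proof: the same rearrangement of \eqref{eq:H}, the same use of $\norm{\nabla h}_\infty\le 1$ to make the quadratic term a contraction with coefficient $\tfrac12$, Plancherel giving $M_2=1$, Riesz--Thorin continuity of the Calder\'on--Zygmund constant at $p=2$, and absorption. The only point you omit (which the paper addresses with a brief fixed-point argument) is the a priori justification that $\norm{\nabla^2 h}_p<\infty$, without which the absorption step is not licensed.
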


\begin{proof}
It suffices to show \eqref{eq:pnorm_D}.
We begin with the representation \eqref{eq:H} in the form
\begin{align}\label{eq:h_H}
\Delta h=\sqrt{1+\abs{\nabla h}^2}H+\frac{\nabla h\otimes \nabla h}{1+\abs{\nabla h}^2}:\nabla^2h.
\end{align}
According to the Calderon-Zygmund theory,  there exists $M_p<\infty$ such that solutions of $\Delta w=g$ in $\R^d$ satisfy
\begin{align*}
\norm{\nabla^2 w}_p\le M_p\norm{g}_p\qquad\text{and}\qquad M_2=1.
\end{align*}
Applying this to \eqref{eq:h_H} under the assumption that $\norm{\nabla^2 h}_p<\infty$ yields the estimate
\begin{align*}
\norm{\nabla^2 h}_p \le M_p\bra{\left\Vert\sqrt{1+\abs{\nabla h}^2}H\right\Vert_p+\left\Vert\frac{\nabla h\otimes \nabla h}{1+\abs{\nabla h}^2}\right\Vert_\infty \norm{\nabla^2h}_p}.
\end{align*}
Since
\begin{align*}
\norm{\nabla h}_\infty\le 1, \qquad \text{and hence also}\qquad\left\Vert\frac{\nabla h\otimes \nabla h}{1+\abs{\nabla h}^2}\right\Vert_\infty\le \frac 12,
\end{align*}
we obtain
\begin{align}\label{eq:Meyer}
\norm{\nabla^2 h}_p \le M_p\norm{H}_p+\frac 12 M_p\norm{\nabla^2h}_p.
\end{align}
We now use that $\lim_{p\to 2}M_p= 1$, which is a consequence of the Riesz-Thorin interpolation theorem and $M_2=1$. Choosing $p>2$ close enough to $2$, we have $\frac 12 M_p<1$ and we can absorb the second term on the right-hand side of \eqref{eq:Meyer} into the left-hand side to obtain
\begin{align*}
\norm{\nabla^2 h}_p \ls \left\Vert H\right\Vert_p.
\end{align*}
A fixed point argument then implies $\nabla^2 h\in L^p$ as long as $\frac 12 M_p<1$.

We remark that the proof simplifies in $d=1$, since
\eqref{eq:H} takes the form
\begin{align*}
H&=\frac{d}{dx}\frac{h_x}{\sqrt{1+h_x^2}}=\frac{h_{xx}}{(\sqrt{1+h_x^2})^3}.
\end{align*}
\end{proof}

We now confirm that our integral estimates on the Hessian and previous algebraic relationships can be converted into a Lipschitz bound in $d=2$ and $d=1$.
\begin{corollary}\label{cor:Lip}
Let $d=2$ or $d=1$ and assume $\norm{\nabla h}_\infty\le 1$. Then
\begin{align}\label{eq:Lip_control}
 \norm{\nabla h}_\infty\ls \bra{\E^{3-d} D^d}^{\frac 16}.
\end{align}
\end{corollary}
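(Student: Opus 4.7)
The plan is to deduce the pointwise Lipschitz bound on $h$ from an $L^\infty$ Gagliardo--Nirenberg interpolation between the $L^2$ control on $\nabla h$ provided by \eqref{eq:E_sim_L2} and an $L^p$ control on $\nabla^2 h$ extracted, via Lemma~\ref{l:pnorm_D}, from the corresponding bound on the mean curvature. All ingredients are in place: \eqref{eq:E_sim_L2} gives $\norm{\nabla h}_2 \lesssim \E^{1/2}$, while Lemmas~\ref{l:H} and~\ref{l:pnorm_D} together provide $\norm{\nabla^2 h}_p \lesssim \norm{H}_p$ and explicit bounds on $\norm{H}_p$ in terms of $\E$ and $D$. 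The only thing to check is that the exponents fit together to produce $(\E^{3-d}D^d)^{1/6}$.

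For $d=1$, the 1D Gagliardo--Nirenberg estimate
\[
\norm{h_x}_\infty^2 \lesssim \norm{h_x}_2 \, \norm{h_{xx}}_2
\]
combined with \eqref{eq:E_sim_L2}, \eqref{eq:L2_h_H}, and \eqref{eq:H_L2} yields
\[
\norm{h_x}_\infty^2 \lesssim \E^{1/2} \cdot (\E D^2)^{1/6} = \E^{2/3} D^{1/3},
\]
i.e.\ $\norm{h_x}_\infty \lesssim (\E^2 D)^{1/6}$, which is exactly the claimed bound at $d=1$.

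For $d=2$, I fix any $p\in(2,2+\delta)$, with $\delta$ from Lemma~\ref{l:pnorm_D}. The $\R^2$ Gagliardo--Nirenberg interpolation applied to $g=\nabla h$ gives
\[
\norm{\nabla h}_\infty \lesssim \norm{\nabla h}_2^{\alpha} \, \norm{\nabla^2 h}_p^{1-\alpha}, \qquad \alpha=\frac{p-2}{2(p-1)},
\]
with exponent determined by scaling. Inserting $\norm{\nabla h}_2 \lesssim \E^{1/2}$ together with $\norm{\nabla^2 h}_p \lesssim \norm{H}_p \lesssim \E^{(4-p)/(6p)} D^{2(p-1)/(3p)}$ (from \eqref{eq:pnorm_D} and \eqref{eq:H_Lp}), a short arithmetic check gives
\[
\text{power of }\E \;=\; \frac{p-2}{4(p-1)} + \frac{4-p}{12(p-1)} = \frac{1}{6}, \qquad \text{power of }D \;=\; \frac{p}{2(p-1)}\cdot\frac{2(p-1)}{3p}=\frac{1}{3},
\]
both independent of $p$. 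Hence $\norm{\nabla h}_\infty \lesssim \E^{1/6} D^{1/3} = (\E D^2)^{1/6}$.

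The main delicacy (more than an obstacle, in view of the machinery already built up) is the critical nature of the $d=2$ embedding: sharp $L^2$ control of $\nabla^2 h$, which is all that \eqref{eq:L2_h_H} provides at $p=2$, just fails to control $\norm{\nabla h}_\infty$. This is precisely why Lemma~\ref{l:pnorm_D} is stated with the Meyers-type perturbative improvement to some $p$ strictly above $2$. Once that strictly super-critical integrability is in hand, the remainder is the routine exponent calculation above, and the cancellation producing the same rate $\E^{1/6}D^{1/3}$ for the whole window of admissible $p$ confirms that the bound \eqref{eq:Lip_control} is the natural one.
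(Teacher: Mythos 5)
Your proof is correct and follows essentially the same route as the paper's: the $d=1$ case via the one-dimensional interpolation $\norm{h_x}_\infty\ls\norm{h_x}_2^{1/2}\norm{h_{xx}}_2^{1/2}$ combined with \eqref{eq:E_sim_L2}, \eqref{eq:L2_h_H}, \eqref{eq:H_L2}, and the $d=2$ case via Lemma~\ref{l:GNS}\ref{l:infty_2_4} together with \eqref{eq:E_sim_L2}, \eqref{eq:pnorm_D}, and \eqref{eq:H_Lp}. The explicit exponent arithmetic you carried out (which the paper leaves implicit) checks out, including the observation that the result is independent of the choice of admissible $p$.
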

\begin{proof}

For $d=1$ this is contained in \cite[Lemma 3.2]{COW} and can be deduced from the interpolation estimate
\begin{align*}
\norm{h_x}_\infty\ls \norm{h_x}_2^{\frac 12}\norm{h_{xx}}_2^{\frac 12}\overset{\eqref{eq:E_sim_L2},\eqref{eq:L2_h_H}}{\ls} \E^{\frac 14}\norm{H}_2^{\frac 12}\overset{\eqref{eq:H_L2}}{\ls} \E^{\frac 13}D^{\frac 16}.
\end{align*}
For $d=2$, let $p>2$ be the exponent from Lemma~\ref{l:pnorm_D}. Then
\begin{align*}
\norm{\nabla h}_\infty\ls \norm{\nabla h}_2^{\frac{p-2}{2(p-1)}}\norm{\nabla^2h}_p^{\frac{p}{2(p-1)}}
\end{align*}
(cf.\ Lemma~\ref{l:GNS}\ref{l:infty_2_4}) together with \eqref{eq:E_sim_L2}, \eqref{eq:pnorm_D} and \eqref{eq:H_Lp} yields \eqref{eq:Lip_control} for $d=2$.
\end{proof}

Of central importance is the following interpolation estimate controlling the energy in terms of the excess mass and dissipation. We present it here for arbitrary dimension; for the proof of Theorem~\ref{thm:main}, we use it for $d=2$ and $d=1$.
\begin{proposition}[Main interpolation estimate]\label{prop:EED}
Under the condition $\norm{\nabla h}_{\infty}\le 1$, there holds
\begin{align*}
  \E\lesssim \V^\fr{6}{d+5} D^\fr{d+2}{d+5}.
\end{align*}
\end{proposition}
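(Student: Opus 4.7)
The key insight of my proof is that the excess energy $\E$ is a convex functional of $h$, since the area integrand $\sqrt{1+|\nabla h|^2}-1$ is convex in $\nabla h$. Its first variation, obtained from \eqref{eq:H}, is $\E'(h)[\phi]=-\int_{\R^d}\phi H\,dx$. Applying the subgradient inequality $\E(h_1)\ge \E(h_0)+\E'(h_0)[h_1-h_0]$ at base point $h_0=h$ and test point $h_1=0$ (using $\E(0)=0$) immediately yields
\begin{align*}
\E\le -\int_{\R^d}hH\,dx\le\left|\int_{\R^d}hH\,dx\right|.
\end{align*}
This reduces the proposition to estimating the \emph{linear} pairing $\int hH\,dx$ in terms of $\V$ and $D$, bypassing the nonlinear Leibniz-type corrections in $\dot H^{1/2}$ that would otherwise arise from trying to relate $\Delta h$ directly to $H$ through the identity $\Delta h=H\sqrt{1+|\nabla h|^2}+N(h)$.

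The second step is a Fourier frequency split of $\int hH\,dx=\int\hat h\,\overline{\hat H}\,dk$ at scale $R>0$. For the high-frequency part, Cauchy--Schwarz with the factorization $|k|^{-1/2}\cdot|k|^{1/2}$ combined with the pointwise bound $|k|^{-1}\le R^{-3}|k|^2$ on $\{|k|\ge R\}$ yields a contribution of order $R^{-3/2}\E^{1/2}D^{1/2}$, where I use $\int|k|^2|\hat h|^2\sim\E$ from \eqref{eq:E_sim_L2} and $\int|k||\hat H|^2\sim D$ from \eqref{eq:H_bdry_est_D} combined with the equivalence of Sobolev norms on $\Gamma$ and $\R^d$. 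For the low-frequency part, Hausdorff--Young $\|\hat h\|_\infty\le\V$ together with Cauchy--Schwarz gives $\int_{|k|\le R}|\hat H|\,dk\lesssim R^{(d-1)/2}D^{1/2}$ in $d\ge 2$, producing a contribution of order $\V R^{(d-1)/2}D^{1/2}$.

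Balancing the two contributions via $R\sim(\E^{1/2}/\V)^{2/(d+2)}$ then produces $|\int hH\,dx|\lesssim\V^{3/(d+2)}\E^{(d-1)/(2(d+2))}D^{1/2}$. Feeding this into $\E\le|\int hH\,dx|$ and solving for $\E$ with exponent $1-(d-1)/(2(d+2))=(d+5)/(2(d+2))$ gives exactly $\E\lesssim\V^{6/(d+5)}D^{(d+2)/(d+5)}$. The main technical subtlety arises in $d=1$, where $\int_{|k|\le R}|k|^{-1}\,dk$ diverges logarithmically and my low-frequency estimate breaks down; I expect this to be handled by estimating instead $\int_{|k|\le R}|\hat H|\,dk\le R^{1/2}\|H\|_2$ and invoking the interpolation $\|H\|_2^2\lesssim\E^{1/3}D^{2/3}$ from \eqref{eq:H_L2}, after which the optimization closes with the same exponents. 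Beyond this low-dimensional fix, the proof is essentially routine once the convexity reduction is made, so the decisive step in the plan is really the first one.
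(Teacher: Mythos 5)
Your proposal is correct, and it reaches the stated exponents by a genuinely different route. The paper's proof is a chain of four facts already established for other purposes: $\E\sim\norm{\nabla h}_2^2$ from \eqref{eq:E_sim_L2}, the physical-space Gagliardo--Nirenberg inequality $\norm{\nabla h}_2^2\lesssim \norm{h}_1^{4/(d+4)}\norm{\nabla^2h}_2^{(2d+4)/(d+4)}$ of Lemma~\ref{l:GNS}, the Calder\'on--Zygmund-type Hessian bound $\norm{\nabla^2h}_2\lesssim\norm{H}_2$ of \eqref{eq:L2_h_H}, and the curvature interpolation $\norm{H}_{L^2(\Gamma)}^2\lesssim(\E D^2)^{1/3}$ of \eqref{eq:H_L2}; solving the resulting self-referential inequality for $\E$ gives the claim. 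You instead first reduce to the bilinear pairing via the pointwise inequality $\sqrt{1+|p|^2}-1=|p|^2/(1+\sqrt{1+|p|^2})\le |p|^2/\sqrt{1+|p|^2}$ (your convexity/subgradient step, which after integration by parts is exactly $\E\le-\int hH\,\dx$), and then run a Nash-style frequency split using only $\norm{\hat h}_\infty\lesssim\V$, $\norm{\nabla h}_2^2\sim\E$, and $\norm{H}_{\dot H^{1/2}}^2\lesssim D$ from \eqref{eq:H_bdry_est_D} together with the $\Gamma$-versus-$\R^d$ norm equivalence. For $d\ge 2$ this buys you independence from both \eqref{eq:L2_h_H} (which requires absorbing the quadratic term in \eqref{eq:H} using $\norm{\nabla h}_\infty\le 1$) and \eqref{eq:H_L2}; it is very much in the spirit of the ``Nash-type estimate'' the introduction alludes to. The price is the low-frequency degeneracy in $d=1$, where your fix correctly reinstates \eqref{eq:H_L2}, and I have checked that the optimization there indeed closes to $\E\lesssim\V D^{1/2}$, matching the claimed exponents. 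The paper's argument is shorter given its toolbox; yours is more elementary in its use of that toolbox and isolates more cleanly why $\V$, $\E$, and $D$ suffice.
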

\begin{proof}
This is a consequence of Lemma~\ref{l:V_D}, the interpolation inequality
\begin{align*}
\norm{\nabla h}_2^2\ls \norm{h}_1^{\frac{4}{d+4}}\norm{\nabla^2 h}_2^{\frac{2d+4}{d+4}}
\end{align*}
(cf. Lemma~\ref{l:GNS}\ref{l:EVD}), \eqref{eq:L2_h_H}, and \eqref{eq:H_L2}.
\end{proof}

\subsection{Differential relationships and decay estimates}\label{ss:diff}
We begin with a few elementary results that are true for general nonnegative quantities $\V,\E$, and $D$ satisfying given algebraic and differential relationships. Then starting with Lemma~\ref{l:Ddot} below, we show that the assumed relationships hold true for the Mullins-Sekerka evolution.

Our first observation is an ODE lemma: The gradient flow structure and the interpolation estimate from Proposition \ref{prop:EED} immediately yield decay of $\E$ on any interval $[0,T]$ in terms of the supremum $\V_T$ defined in \eqref{eq:V_T}.

\begin{lemma}\label{l:E_decay}
Suppose that for some $T>0$ the quantities $\V,\E,D:[0,T]\to [0,\infty)$ satisfy
\begin{align}\label{eq:EED}
\ddt \E\le-D,\quad \E\ls \V^{\frac{6}{d+5}}D^{\frac{d+2}{d+5}}.
\end{align}
Then, for all $t\in[0,T]$:
\begin{align}\label{eq:E_decay2}
\E(t)\ls \min\left\{\E_0,\frac{\V_T^2}{t^{\frac{d+2}{3}}}\right\}.
\end{align}
\end{lemma}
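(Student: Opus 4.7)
The proof splits into two bounds. The first bound $\E(t) \le \E_0$ is immediate from $\ddt \E \le -D \le 0$. The second bound $\E(t) \lesssim \V_T^2/t^{(d+2)/3}$ is an ODE argument, and this is where all the work is.

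The plan is to combine the interpolation inequality with the dissipation inequality to derive a closed differential inequality for $\E$ alone. Rearranging $\E \lesssim \V^{6/(d+5)} D^{(d+2)/(d+5)}$ gives
\begin{align*}
D \gtrsim \V^{-6/(d+2)} \E^{(d+5)/(d+2)} \ge \V_T^{-6/(d+2)} \E^{(d+5)/(d+2)} \qquad \text{on } [0,T],
\end{align*}
where the second inequality uses $\V(t) \le \V_T$ by definition \eqref{eq:V_T}. Feeding this into $\ddt \E \le -D$ yields the Bernoulli-type inequality
\begin{align*}
\ddt \E \lesssim -\V_T^{-6/(d+2)} \E^{(d+5)/(d+2)}.
\end{align*}

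The nonlinear ODE is handled by the standard trick of differentiating a negative power of $\E$: setting $\alpha := 3/(d+2)$, so that $(d+5)/(d+2) = 1 + \alpha$, one computes
\begin{align*}
\ddt \E^{-\alpha} = -\alpha \E^{-\alpha - 1} \ddt \E \gtrsim \V_T^{-6/(d+2)},
\end{align*}
and integration from $0$ to $t$ absorbs the initial term $\E_0^{-\alpha} \ge 0$ to give $\E(t)^{-\alpha} \gtrsim \V_T^{-6/(d+2)} t$. Raising to the power $-1/\alpha = -(d+2)/3$ and checking that $6/(\alpha(d+2)) = 2$ yields exactly $\E(t) \lesssim \V_T^2 / t^{(d+2)/3}$.

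There is no real obstacle here: the only point worth being careful about is that $\V_T$ (and not $\V_0$) appears in the denominator of the ODE, because the excess mass is not monotone along the flow. This is why the bound is phrased in terms of $\V_T$; any later use of the lemma in the paper (e.g.\ in Proposition~\ref{prop:3}) will need a separate duality argument to show $\V_T \lesssim \V_0 + \E_0^{(d+1)/d}$ before the decay estimate can be translated into the intrinsic form appearing in \eqref{st2}. Taking the minimum with $\E_0$ at the end combines the two bounds into \eqref{eq:E_decay2}.
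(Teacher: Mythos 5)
Your proof is correct and follows exactly the paper's argument: combine the two hypotheses into the closed differential inequality $-\dot{\E}\gtrsim \V_T^{-6/(d+2)}\E^{(d+5)/(d+2)}$ and integrate the resulting Bernoulli-type ODE. The paper's proof is a two-line version of the same computation; you have merely filled in the standard integration details (including the correct observation that one must pass from $\V$ to $\V_T$ since the excess mass is not monotone).
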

\begin{proof}
Combining the relation $\dot \E\le-D$ with the inequality \eqref{eq:EED} yields
\begin{align*}
-\dot \E\gtrsim\V_T^{-\frac{6}{d+2}}\E^{\frac{d+5}{d+2}}.
\end{align*}
An integration in time completes the proof.
\end{proof}
\begin{lemma}\label{l:ED2}
Let $d=2$ or $d=1$. For any $C<\infty$, there exists $\eps_1>0$ with the following property. For any $T>0$, if  $\E,D:[0,T]\to[0,\infty)$ satisfy
\begin{align*}
\ddt \E\le-D,
\end{align*}
and
\begin{align*}
\ddt\D\leq C D^\frac{6-d}{3-d},
\end{align*}
then $\E^{3-d} D^d\le \eps_1$ implies
\begin{align}\label{eq:ED2dot}
\ddt{\bra{\E^{3-d} D^d}}\leq 0.
\end{align}
\end{lemma}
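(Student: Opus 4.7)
The plan is a direct ODE calculation. I want to show that the dimensionless quantity $Q := \E^{3-d}D^d$ is nonincreasing when small, by computing $\ddt Q$ via the product rule, substituting both hypotheses, and factoring the result into a positive prefactor times an expression in a positive power of $Q$ itself. Since $Q\le \eps_1$ by assumption, choosing $\eps_1$ sufficiently small makes the bracketed expression nonpositive.

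Concretely, the first step is the product-rule identity
\begin{align*}
\ddt\bra{\E^{3-d}D^d} = (3-d)\E^{2-d}\dot\E\, D^d + d\,\E^{3-d}D^{d-1}\dot D.
\end{align*}
Inserting $\dot\E \le -D$ and $\dot D \le C D^{(6-d)/(3-d)}$ and factoring out the common positive quantity $\E^{2-d}D^{d+1}$, exponent bookkeeping gives
\begin{align*}
\ddt\bra{\E^{3-d}D^d} \le \E^{2-d}D^{d+1}\bra{-(3-d) + Cd\,\bra{\E^{3-d}D^d}^{1/(3-d)}}.
\end{align*}
The final step is to choose $\eps_1 \le \bra{(3-d)/(Cd)}^{3-d}$, which makes the bracket nonpositive whenever $\E^{3-d}D^d \le \eps_1$. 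For $d=2$ this specializes to $\eps_1 \le 1/(2C)$ and the key factored bound is $\ddt(\E D^2)\le D^3(-1+2C\E D^2)$; for $d=1$ one obtains $\eps_1 \le 4/C^2$ with $\ddt(\E^2 D)\le \E D^2(-2+C(\E^2 D)^{1/2})$.

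There is no substantive obstacle; the only thing to verify is the exponent arithmetic, namely that the $D$-exponent $(d-1)+(6-d)/(3-d)$ in the second term exceeds $d+1$ by exactly $d/(3-d)$ and that the $\E$-exponent $(3-d)$ exceeds $(2-d)$ by exactly $1$, so that the correction term is $\E\cdot D^{d/(3-d)} = Q^{1/(3-d)}$. This matching is not an accident: the growth rate $D^{(6-d)/(3-d)}$ assumed for $\dot D$ (established from the differential inequality \eqref{eq:ddt_D_int} together with Lemma~\ref{l:pnorm_D} and the other algebraic estimates of Section~\ref{ss:alg}) is precisely the scaling for which $\E^{3-d}D^d$ is the critical dimensionless invariant of the flow, so that Lemma~\ref{l:ED2} captures exactly this propagation of smallness.
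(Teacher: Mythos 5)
Your proof is correct and is essentially identical to the paper's: the same product-rule expansion, the same substitution of both differential inequalities, and the same factoring of $\E^{2-d}D^{d+1}$ to exhibit the bracket $-(3-d)+Cd\,(\E^{3-d}D^d)^{1/(3-d)}$, with the same choice of $\eps_1$. The exponent bookkeeping you flag as the only thing to verify indeed checks out.
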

\begin{proof}
This follows from the straightforward computation
\begin{align*}
\ddt{\bra{\E^{3-d} D^d}}&=(3-d)\,\dot{\E}\,\E^{2-d}\,D^d+d\,\dot{D}\,\E^{3-d}\, D^{d-1}\\
&\le -(3-d)\,\E^{2-d}\,D^{d+1}+Cd\,\E^{3-d}\, D^{d-1}\,D^\frac{6-d}{3-d}\\
&\le \E^{2-d}\,D^{d+1}\bra{dC\bra{\E^{3-d}D^d}^\frac{1}{3-d}-(3-d)},
\end{align*}
which has the right sign if $dC\eps_1^{\frac{1}{3-d}}<3-d$. This calculation for $d=1$ appeared already in \cite[Lemma 5.1]{COW}.
\end{proof}

\begin{lemma}\label{l:D_decay}
Let $d=2$ or $d=1$ and $T>0$. Suppose  $\E,D:[0,T]\to [0,\infty)$ satisfy the relations
\begin{align*}
\ddt \E\le-D,\quad \text{and}\quad
\ddt\D\ls D^\frac{6-d}{3-d}\quad
 \text{ on }[0,T].
\end{align*}
Then, for all $t\in [0,T]$ with $t\gtrsim \E_0^{\frac 3d}$ there holds
\begin{align}\label{eq:D_decay2}
D(t)\ls \frac{\E\bra{\frac{t}{2}}}{t}.
\end{align}
\end{lemma}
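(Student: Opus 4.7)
The strategy combines the integrated dissipation bound (which provides existence of a ``good'' time in $[t/2,t]$ with small $D$) with the differential inequality on $\ddt D$ (which bounds how quickly $D$ can grow back). I expect the propagation step to be technically routine; the main point will be to verify that the threshold obtained for $t$ matches $\E_0^{3/d}$.

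\textbf{Step 1 (Base point).} First I integrate $\ddt\E\le -D$ over $[t/2,t]$ to get $\int_{t/2}^{t}D(\tau)\,\dtau\le \E(t/2)$. By the mean value theorem, there exists $s_0\in [t/2,t]$ with
\begin{align*}
D(s_0)\le \frac{2\,\E(t/2)}{t}.
\end{align*}

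\textbf{Step 2 (Propagation via the ODE).} Set $p\coloneqq\frac{6-d}{3-d}$, so the hypothesis reads $\ddt D\le C D^{p}$ for a universal constant $C$. Since $p>1$, rewriting as
\begin{align*}
-\ddt\bra{\frac{1}{(p-1)D^{p-1}}}\le C
\end{align*}
and integrating from $s_0$ to $t$ yields
\begin{align*}
\frac{1}{D(t)^{p-1}}\ge \frac{1}{D(s_0)^{p-1}}-(p-1)C(t-s_0).
\end{align*}
Consequently, as long as $(p-1)C(t-s_0)\,D(s_0)^{p-1}\le \tfrac{1}{2}$, we can absorb the second term and conclude $D(t)\le 2^{1/(p-1)}D(s_0)\lesssim \E(t/2)/t$, which is the desired bound.

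\textbf{Step 3 (The threshold $t_*$).} It remains to exhibit $t_*\sim\E_0^{3/d}$ ensuring the smallness condition above. Using $D(s_0)\le 2\E_0/t$, $t-s_0\le t/2$, and $p-1=\frac{3}{3-d}$, the smallness condition reduces to
\begin{align*}
t\cdot \bra{\frac{\E_0}{t}}^{\frac{3}{3-d}}\lesssim 1 \quad\Longleftrightarrow\quad t^{\frac{d}{3-d}}\gtrsim \E_0^{\frac{3}{3-d}} \quad\Longleftrightarrow\quad t\gtrsim \E_0^{\frac{3}{d}}.
\end{align*}
Choosing the implicit constant in $t_*\sim \E_0^{3/d}$ large enough to meet this condition (with the correct universal factor coming from $C$ and $p$) closes the argument and yields~\eqref{eq:D_decay2} for all $t\in[t_*,T]$. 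The two dimensions of interest correspond to $p=4$ (so $t\gtrsim \E_0^{3/2}$) when $d=2$ and $p=5/2$ (so $t\gtrsim \E_0^{3}$) when $d=1$, both of which agree with the claimed scaling of $t_*$.
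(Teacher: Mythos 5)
Your proof is correct and rests on the same two ingredients as the paper's argument: the integrated energy--dissipation relation on $[t/2,t]$ and the ODE $\ddt D\lesssim D^{\frac{6-d}{3-d}}$, with the threshold $t_*\sim\E_0^{3/d}$ emerging from exactly the same scaling. The only organizational difference is that you select a good time $s_0$ by pigeonhole and propagate the ODE forward to $t$, whereas the paper propagates a lower bound for $D(s)$ backward from $t$ and integrates it against $\int_{t/2}^t D\le\E(t/2)$, ruling out the bad branch of the resulting minimum via $\E\le\E_0$; both routes are sound and essentially equivalent.
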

\begin{proof}
Let $d=2$. By integrating the inequality
\begin{align*}
-\ddt \left(D^{-3}\right)\leq C
\end{align*}
for $C\geq 1$ on the interval $[s,t]$ and multiplying with $D(s)^3D(t)^3$, we obtain the inequality
\begin{align*}
  D(s)^3\geq \frac{D(t)^3}{1+C (t-s)D(t)^3}\geq \frac{D(t)^3}{C+C (t-s) D(t)^3}
\end{align*}
and deduce
\begin{align*}
D(s)\gs \frac{D(t)}{\bra{1+(t-s)D(t)^3}^{\frac 13}}.
\end{align*}
We insert this in
\begin{align*}
\int_{\tau}^t D(s)\dd s\le\E(\tau)-\E(t)\le \E(\tau)
\end{align*}
to obtain
\begin{align*}
\E(\tau)&\gs D(t)\int_{\tau}^t \frac{1}{\bra{1+(t-s)D(t)^3}^{\frac 13}}\dd s=D(t)^{-2}\int_0^{(t-\tau)D(t)^3}\frac{1}{(1+\sigma)^{\frac 13}}\dd \sigma\\
&\gs \min\set{(t-\tau)D(t)(t-\tau),(t-\tau)^{\frac 23}}.
\end{align*}
We finish by choosing $\tau=\frac t2$.

The proof for $d=1$ is similar and is contained in \cite[Proof of (1.13)]{COW}.
\end{proof}

We now check that the assumed differential estimates from Lemmas~\ref{l:ED2} and \ref{l:D_decay} hold true for the Mullins-Sekerka evolution.
The differential inequality for the energy gap is contained in \textbf{(H)}. It remains to
derive a differential inequality for the dissipation.

\begin{lemma}\label{l:Ddot}
Consider a smooth solution of the Mullins-Sekerka evolution with graph structure on $[0,T]$ in $d=2$ or $d=1$ and assume $\norm{\nabla h}_\infty\le 1$.
In $d=1$, there holds
\begin{align}\label{eq:Ddot_1}
\ddt D\lesssim D^{\frac 52}\bra{1+\bra{\E^2D}^{\frac 12}}.
\end{align}
In $d=2$, let $p>2$ be such that the assertion of Lemma~\ref{l:pnorm_D} is satisfied. Then there holds
\begin{align}\label{eq:Ddot}
\ddt D\lesssim D^4\bra{1+\bra{\E D^2}^{\frac{4-p}{2(p-2)}}}.
\end{align}
\end{lemma}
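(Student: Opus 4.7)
The plan is to compute $\dot D$ directly in the graph setting, isolate a nonpositive leading-order contribution, and bound the remaining nonlinear corrections by powers of $D$ (and $\E$), using the $L^p$-estimates for $H$ and $\nabla^2 h$ from Section~\ref{ss:alg}.

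Starting from the identity $D=\int_{\R^d} h_t H\,\dx$, which follows from $D=\int|\babla f|^2\,\bdx$ via integration by parts together with $\Delta f=\partial_t\chi$, the boundary condition $f|_\Gamma=H$, and the graph change of variables ($V=h_t/J$, $\dS=J\,\dx$), time differentiation yields
\begin{align*}
\dot D=\int_{\R^d}h_{tt}H\,\dx+\int_{\R^d}h_tH_t\,\dx.
\end{align*}
From \eqref{eq:H} one computes $H_t=\Div(B(\nabla h)\nabla h_t)$ with $B_{ij}=\delta_{ij}/J-\partial_ih\partial_jh/J^3$, and integration by parts then gives $\int h_t H_t\,\dx=-\int B\nabla h_t\cdot\nabla h_t\,\dx \lesssim -\norm{\nabla h_t}_2^2$ under $\norm{\nabla h}_\infty\le 1$ (since $B\sim\Id$ in that regime). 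For the second term, I would substitute $h_t=-J[\babla f\cdot n]$ and differentiate in time, exploiting that the harmonic extension $f$ of $H$ depends nonlinearly on $\Gamma$. In the linearization $\dot h=2|\nabla|\Delta h$ the two contributions combine into $\dot D=-8\norm{h}_{\dot H^4}^2\le 0$, so every term of positive sign in $\dot D$ is a genuinely nonlinear correction.

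The nonlinear corrections arise from three identifiable sources: (i) the quadratic-in-$\nabla h$ term $\nabla^2h:(\nabla h\otimes\nabla h)/J^2$ in \eqref{eq:H}, (ii) the deviation of the exact Dirichlet-to-Neumann operator on $\Gamma$ from the flat operator $|\nabla|$, and (iii) the Jacobian $J$ relating $h_t$ and $V$. Each correction is a multilinear integral with at least one extra factor of $\nabla h$ (or equivalently, an appropriately scaled $\nabla^2 h$) beyond what appears in the linearization, making it cubic or higher in derivatives of $h$. These are controlled via H\"older's inequality, the Lipschitz bound $\norm{\nabla h}_\infty\le 1$, Lemma~\ref{l:H} for $L^p$-control of $H$, Lemma~\ref{l:pnorm_D} for $L^p$-control of $\nabla^2 h$, and the norm equivalence between $\Gamma$ and $\R^d$; any factors of $\nabla h_t$ produced by the integration by parts are absorbed into the nonpositive leading term by Young's inequality.

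In $d=2$ the key input is $\norm{\nabla^2 h}_p\lesssim \norm{H}_p$ for some $p>2$ close to $2$ (Lemma~\ref{l:pnorm_D}), combined with the interpolation $\norm{H}_p\lesssim \E^{(4-p)/6p}D^{2(p-1)/3p}$ from \eqref{eq:H_Lp} and the Sobolev embedding $\dot H^{1/2}(\R^2)\hookrightarrow L^4$ (which, in conjunction with \eqref{eq:H_bdry_est_D}, gives $\norm{H}_4^2\lesssim D$); interpolating each correction against these estimates reproduces exactly the residual factor $(\E D^2)^{(4-p)/(2(p-2))}$ claimed in the statement. In $d=1$ the analogous computation is more elementary, relying on the pointwise formula $H=h_{xx}/J^3$, the $L^2$-estimates \eqref{eq:L2_h_H} and \eqref{eq:H_L2}, and Sobolev embedding in one dimension. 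The main obstacle will be the precise bookkeeping: the nonlinear corrections produce many multilinear integrals, and each one must be interpolated to reproduce the sharp scaling $D^{(6-d)/(3-d)}$ with only the admissible power of the dimensionless quantity $\E^{3-d}D^d$ as residual factor.
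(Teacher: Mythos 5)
Your starting identity $D=\int_{\R^d}h_tH\,\dx$ is correct, your good term $\int h_tH_t\,\dx=-\int B\nabla h_t\cdot\nabla h_t\,\dx\le -c\norm{\nabla h_t}_2^2$ is correct (the matrix $B$ has eigenvalues $J^{-1}$ and $J^{-3}$, both $\gtrsim 1$ under $\norm{\nabla h}_\infty\le 1$), and the interpolation tools you name at the end are exactly the ones needed. But the proof has a genuine gap at its center: the term $\int h_{tt}H\,\dx$ is never actually computed. Differentiating $h_t=-J[\babla f\cdot n]$ in time requires the shape derivative of the two-sided harmonic extension of $H$ off the \emph{moving} surface $\Gamma(t)$ --- equivalently, an expansion of the time-differentiated Dirichlet-to-Neumann map on $\Gamma$. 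Your assertion that the resulting corrections split into ``three identifiable sources,'' each a multilinear integral that is ``cubic or higher in derivatives of $h$,'' is precisely the claim that needs proof; source (ii) in particular (the deviation of the exact DtN operator from $\abs{\nabla}$, differentiated in time) produces commutator-type terms for which no estimate is offered, and it is not at all routine that these close with the residual factor $(\E D^2)^{(4-p)/(2(p-2))}$.

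The paper avoids this computation entirely. It differentiates $D=\int_{\R^{d+1}}\abs{\babla f}^2\bdx$ directly via the transport theorem on the moving domains $\Omega_\pm(t)$, uses harmonicity of $f_\pm$ and the boundary condition $f_\pm=H$ to reduce everything to boundary integrals, and then invokes the known evolution equation for the mean curvature, $\dot H=-\Div_{\tan}\nabla_{\tan}V-\abs{\nabla_{\tan}n}^2V$. This yields
\begin{align}
\ddt D+2\int_\Gamma\abs{\nabla_\Gamma V}^2\dS\le 2\int_\Gamma\abs{\nabla_\Gamma n}^2V^2\dS+\int_\Gamma V^2\bra{\babla f_+\cdot n+\babla f_-\cdot n}\dS,
\end{align}
so the only error terms are two completely explicit boundary integrals: the first is handled by $\norm{\nabla_\Gamma n}_{L^p(\Gamma)}\ls\norm{\nabla^2h}_p\ls\norm{H}_p$ together with \eqref{eq:H_Lp} and a Ladyzhenskaya-type interpolation of $\norm{V}_{2p/(p-2)}$ between $\norm{V}_{\Hd^{-1/2}}\ls D^{1/2}$ and $\norm{\nabla V}_2$; the second by the $\Hd^{\pm 1/2}(\Gamma)$ duality and the product estimate $\norm{V^2}_{\Hd^{1/2}}\ls\norm{V}_6^{3/2}\norm{\nabla V}_2^{1/2}$. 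The fractional powers of $\norm{\nabla V}_2$ are then absorbed into the left-hand side by Young. If you want to salvage your route, you would need to either carry out the shape-derivative computation for $[\babla f\cdot n]$ in full, or recognize (as the paper does) that the transport-theorem identity plus Andrews' curvature evolution equation lets you bypass it.
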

\begin{remark}\label{rem:small_slope}
In view of Corollary \ref{cor:Lip} and Lemmas~\ref{l:ED2} and~\ref{l:Ddot}, the condition $\norm{\nabla h}_\infty \le 1$ will be preserved for the MS evolution in $d=2$ and $d=1$ if $\E^{3-d} D^d$ is small enough initially.
\end{remark}
\begin{proof}[Proof of Lemma~\ref{l:Ddot}]
In $d=1$, the result is contained in \cite[Lemma 4.1 and (4.3)]{COW}.

For $d=2$, we recall the well-known evolution equation for the mean curvature (which can be derived for instance from \cite[(3.8) and (3.10)]{And}):
\begin{align}\label{eq:materialder_H}
  \dot{H}=-\Div_{\tan} \nabla_{\tan} \vel-\abs{\nabla_{\tan} n}^2\vel,
\end{align}
where $\nabla_{\tan}$ and $\Div_{\tan}$ are the surface gradient and the surface divergence, respectively, and $\dot{H}$ represents the change of the curvature in the normal direction.
We compute
\begin{align}
  \ddt D&=\ddt\int_{\R^3}\abs{\babla f}^2\bdx=\ddt\int_{\Omega_+(t)}\abs{\babla f_+}^2\bdx+\ddt\int_{\Omega_-(t)}\abs{\babla f_-}^2\bdx\nonumber\\
  &=\int_{\Omega_+(t)}\ddt\abs{\babla f_+}^2\bdx+\int_{\Omega_-}\ddt\abs{\babla f_-}^2\bdx-\int_{\Gamma}\vel\bra{\abs{\babla f_+}^2-\abs{\babla f_-}^2}\dS\nonumber\\
  \overset{\eqref{eq:f}}&{=}\int_\Gamma -2\dot{f_+}\babla f_+\cdot n
  +2\dot{f_-}\babla f_-\cdot n
  -\vel\bra{\abs{\babla f_+}^2-\abs{\babla f_-}^2}\dS\nonumber\\
  \overset{\eqref{eq:f}}&{=}\int_\Gamma -2\dot{H}\jump{\babla f\cdot n}-\vel\bra{\abs{\babla f_+}^2-\abs{\babla f_-}^2}\dS\nonumber\\
  \overset{\eqref{eq:materialder_H}}&{=}\int_\Gamma 2\bra{\Div_\Gamma \nabla_\Gamma \vel+\abs{\nabla_\Gamma n}^2\vel}V-\vel\bra{\abs{\babla f_+}^2-\abs{\babla f_-}^2}\dS,\label{eq:dtD}
\end{align}
where in the third line we have applied the divergence theorem and $\dot{f_\pm}=\frac{d}{dt}H(h(x,t),t)$ is the (total) time derivative of the curvature of a boundary point. Because $\babla f$ is continuous in the tangential direction, the difference on the right-hand side of \eqref{eq:dtD} satisfies
\begin{align}\label{eq:3rdbinom}
\abs{\babla f_+}^2-\abs{\babla f_-}^2&=\bra{\babla f_+ \cdot n}^2-\bra{\babla f_- \cdot n}^2=[\babla f\cdot n]\bra{\babla f_+ \cdot n+\babla f_- \cdot n}\nonumber\\
\overset{\eqref{eq:f}}&{=}-\vel\bra{\babla f_+ \cdot n+\babla f_- \cdot n}.
\end{align}
Inserting \eqref{eq:3rdbinom} into the right-hand side of \eqref{eq:dtD} and integrating by parts yield
\begin{align}\label{eq:DD_prelim}
 \lefteqn{ \frac{\dd}{\dd t}D+2\int_\Gamma \abs{\nabla_\Gamma \vel}^2\dS}\notag\\
 &\le 2\underbrace{\int_\Gamma  \abs{\nabla_\Gamma n}^2\vel^2\dS}_{=:A} + \underbrace{\int_\Gamma\vel^2\bra{\babla f_+\cdot n+ \babla f_-\cdot n}\dS}_{=:B}.
\end{align}
We will deduce \eqref{eq:Ddot} from \eqref{eq:DD_prelim}, the error estimates
\begin{align}
A&\ls \E^{\frac{4-p}{3p}}D^{\frac{2(3p-4)}{3p}}\norm{\nabla \vel}_{2}^{\frac{2(p+4)}{3p}}\label{eq:E1}\\
  \abs{B}&\ls D^{\frac 23}\norm{\nabla \vel}_2^{\frac 53},\label{eq:E2}
\end{align}
and Young's inequality, absorbing the $\norm{\nabla V}_2^2$ term in the left-hand side, where we use that $\norm{\nabla V}_2^2\ls \int_\Gamma \abs{\nabla_\Gamma \vel}^2\dS$. (Notice that because of $p>2$, one has $2(p+4)/(3p)<2$.)

We start by estimating
\begin{align}\label{eq:first_DD}
  A\ls \norm{\nabla_\Gamma n}_{L^p(\Gamma)}^2\norm{\vel}_{L^{\frac{2p}{p-2}}(\Gamma)}^2.
\end{align}
On the one hand, a simple computation based on \eqref{eq:n} and using $\norm{\nabla h}_\infty\le 1$
reveals
\begin{align}\label{eq:nabla_n_D}
  \norm{\nabla_\Gamma n}_{L^p(\Gamma)}^2\ls \norm{\nabla_\Gamma n}_{p}^2\ls  \norm{\nabla^2 h}_p^2\overset{\eqref{eq:pnorm_D},\eqref{eq:H_Lp}}{\ls} \E^{\frac{4-p}{3p}}D^{\frac{4(p-1)}{3p}}.
\end{align}
On the other hand we estimate
\begin{align}\label{eq:lady}
  \norm{\vel}_{L^{\frac{2p}{p-2}}(\Gamma)}^2\ls \norm{\vel}^2_{\frac{2p}{p-2}}\ls \norm{\vel}_{\Hd^{-1/2}}^{\frac{4(p-2)}{3p}}\norm{\nabla \vel}_{2}^{\frac{2(p+4)}{3p}}.
\end{align}
The last inequality follows from the interpolation inequalities
\begin{align*}
\norm{\vel}_2\ls \norm{\vel}_{\Hd^{-1/2}}^{\frac 23}\norm{\nabla \vel}_2^{\frac 13},
\end{align*}
and
\begin{align*}
\norm{\vel}_q\ls \norm{\vel}_2^{\theta}\norm{\nabla \vel}_2^{1-\theta},
\end{align*}
for $\theta=2/q$, see Lemma~\ref{l:GNS}\ref{l:Lad_q}.

Inserting \eqref{eq:nabla_n_D} and \eqref{eq:lady} into \eqref{eq:first_DD} and using $\norm{\vel}_{\Hd^{-\frac 12}}\ls \norm{\vel}_{\Hd^{-\frac 12}(\Gamma)}$ in combination with Lemma \ref{l:V_D}, we arrive at \eqref{eq:E1}.

We now turn to establishing \eqref{eq:E2} for $B$. The terms with $f_+$ and $f_-$ are handled in the same way, so without loss of generality, we consider  $f_+$ and begin with
\begin{align}\label{eq:second_DD}
  \abs{\int_\Gamma \babla f_+\cdot n\vel^2\dS}\ls \norm{\babla f_+\cdot n}_{\Hd^{-\frac 12}(\Gamma)}\norm{\vel^2}_{\Hd^{\frac 12}(\Gamma)}
  \lesssim D^{\frac 12}\norm{\vel^2}_{\Hd^{\frac 12}(\Gamma)},
\end{align}
where for the second estimate we have applied Lemma \ref{l:V_D}.
For the second term on the right-hand side, we use
\begin{align}\notag
  \norm{\vel^2}_{\Hd^{\frac 12}(\Gamma)}\ls \norm{\vel}_6^{\frac 32}\norm{\nabla \vel}_2^{\frac 12}
\end{align}
(cf.\ Lemma~\ref{l:V^2}) and insert
\eqref{eq:lady} with $p=3$ to obtain
\begin{align}
  \norm{\vel^2}_{\Hd^{\frac 12}(\Gamma)}&\ls \norm{\vel}_{\Hd^{-\frac 12}}^{\frac 13}\norm{\nabla \vel}_2^{\frac{5}3}.\label{eq:H12_est2}
\end{align}
Again applying Lemma \ref{l:V_D} and
inserting the result into \eqref{eq:second_DD} yields \eqref{eq:E2}.

\end{proof}

\subsection{Duality argument}\label{ss:dual}

In this subsection we address the main mathematical challenge of this section, which is to prove within the graph regime that $\V$ remains bounded in terms of its initial data for all time. As in Section~\ref{sec:initial}, the starting point is a dual representation of $\V$. In the graph setting, we use:
\begin{align*}
\V=\sup_{\psi\in L^\infty(\R^d), \norm{\psi}_\infty\le 1}\int \psi h\dx,
\end{align*}
and again we use the solution $\bar u$ of \eqref{eq:uv1}--\eqref{eq:dual_lin2}. 
 To obtain uniform-in-time error estimates for large times, the decay of $\E$ and $D$ will play a central role. Before stating and proving the duality result, we introduce a splitting of the linearization error into kinetic and geometric nonlinearity.
\begin{lemma}[Splitting the error and preprocessing]\label{l:preproc}
Let $\psi\in C^\infty_c(\R^d)$ with $\norm{\psi}_{\infty}\le 1$. In $d=2$ and $d=1$ for a smooth solution of MS with graph structure on $[0,T]$,  and $u$ satisfying \eqref{eq:uv1}--\eqref{eq:dual_lin2}, there holds
\begin{align}\label{integra}
\frac{d}{dt}\int_{\R^d} hu \dx&=\frac{d}{dt}\int_{\R^d}\bar{h}\bar{u}\notag\\
&=\bra{-\int_{\Gamma}\vel\; \ub(x,0)\dS+2\int_{\R^d} f(x,0)\; \partial_z \ub \dx}\\
&+2\int_{\R^d} \bra{f(x,h(x))-f(x,0)}\; \partial_z \ub \dx-2\int_{\R^d}\bra{H(x,h(x))-\Delta  h}\partial_z \ub \dx.
\end{align}
Moreover, defining
\begin{align}
A_4:&=-\int_{\Gamma}\vel\; \ub(x,0)\dS+2\int_{\R^d} f(x,0)\; \partial_z \ub \dx\\
A_5:&=2\int_{\R^d} \bra{f(x,h(x))-f(x,0)}\; \partial_z \ub \dx\\
A_6:&=2\int_{\R^d}\bra{H(x,h(x))-\Delta  h}\partial_z \ub \dx.
\end{align}
the following estimates hold.
\begin{align}
  \abs{A_4}&\ls
  D^{\frac 12}\left(\frac{\V_T^{\frac 1{d+2}}\E^{\frac{d+1}{2(d+2)}}}{(T-t)^{\frac 13}}+\frac{\V_T^{\frac 2{d+2}}\E^{\frac{d}{2(d+2)}}}{(T-t)^{\frac 12}}
  \right),\label{A1initial}\\
 \abs{A_5}&\ls\frac{\V_T^{\frac 12}\D^{\frac 12}}{(T-t)^\frac 13} . \label{eq:A2}\\
\abs{A_6}&\ls   \frac{1}{(T-t)^\frac 23}\min \left \{\norm{\nabla h}_{2}^2, \norm{\nabla h}_{3}^3\right \}.\label{besttwice}
\end{align}
\end{lemma}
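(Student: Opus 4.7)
The identity follows by computing
\[\frac{d}{dt}\int_{\R^d}hu\,dx=\int h_tu\,dx+\int hu_t\,dx.\]
The kinematic relation $h_t=V\sqrt{1+|\nabla h|^2}$ together with $dS=\sqrt{1+|\nabla h|^2}\,dx$ gives $\int h_tu\,dx=\int_\Gamma V\bar u(x,0)\,dS$, while the backward adjoint equation $\partial_tu=-2|\nabla|\Delta u=2\partial_z\Delta\bar u|_{z=0^+}$ followed by one integration by parts in $x$ (moving $\Delta$ from $\partial_z\bar u$ onto $h$) yields $\int hu_t\,dx=2\int\Delta h\,\partial_z\bar u\,dx$. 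Adding and subtracting $f(x,0)$ and $f(x,h(x))$, and using $f(x,h(x))=H(x,h(x))$ on $\Gamma$, regroups the result into $A_4+A_5-A_6$.

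The heart of the proof is the bound on $A_4$, which I would first recast in the transparent \emph{geometric form}
\[A_4 = \mp\int_\Gamma V\bigl(\bar u(x,h(x))-\bar u(x,0)\bigr)\,dS.\]
This is obtained by equating two integration-by-parts identities for $\int_{\R^{d+1}}\babla f\cdot\babla\bar u\,d\bx$: the first, using $\belta f=[\babla f\cdot n]\mathcal H^d|_\Gamma$ and continuity of $f$ across $\Gamma$, yields $\int_\Gamma V\bar u(x,h(x))\,dS$; the second, using that the even reflection of $\bar u$ satisfies $\belta\bar u=2(\partial_z\bar u)\delta_{z=0}$, yields $-2\int f(x,0)\partial_z\bar u\,dx$. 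In the geometric form, $A_4$ is estimated by a trace pairing: $\|V\|_{\Hd^{-1/2}(\Gamma)}^2\lesssim D$ from Lemma~\ref{l:V_D} supplies the $D^{1/2}$ factor, and the two terms on the right of~\eqref{A1initial} emerge from interpolating $\|\bar u(x,h(x))-\bar u(x,0)\|_{\Hd^{1/2}(\Gamma)}$ between the pointwise bound $|\bar u(x,h)-\bar u(x,0)|\le|h|\,\|\partial_z\bar u\|_\infty$ and its tangential analogue. The exponents $\V^{1/(d+2)}$, $\V^{2/(d+2)}$, $\E^{(d+1)/(2(d+2))}$, $\E^{d/(2(d+2))}$ then arise from Gagliardo-Nirenberg-type interpolation converting $\|h\|_1=\V$ and $\|\nabla h\|_2^2\sim\E$ (cf.~\eqref{eq:E_sim_L2}) into intermediate $L^p$ and $\Hd^{1/2}$ control of $h$, while the weights $(T-t)^{-1/3}$ and $(T-t)^{-1/2}$ come from pointwise decay of $\partial_z\bar u$ and its tangential (half-)derivatives, obtained from the explicit Green's-function representation of the backward problem (cf.\ Subsection~\ref{sss:rr}).

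For $A_5$, writing $f(x,h(x))-f(x,0)=\int_0^{h(x)}\partial_zf(x,z)\,dz$ and applying Cauchy-Schwarz over the slab $\{(x,z):0<z/h(x)<1\}$ (whose volume equals $\V$) yields
\[|A_5|\le 2\|\partial_z\bar u\|_\infty\,\|\babla f\|_{L^2(\R^{d+1})}\,\V^{1/2}\lesssim\frac{\V_T^{1/2}D^{1/2}}{(T-t)^{1/3}},\]
which is~\eqref{eq:A2}. For $A_6$, I would exploit the divergence structure $H-\Delta h=\Div\bigl(\nabla h(1/\sqrt{1+|\nabla h|^2}-1)\bigr)$ to integrate by parts once, reducing $A_6$ to $-2\int\nabla h\bigl(1-1/\sqrt{1+|\nabla h|^2}\bigr)\cdot\nabla\partial_z\bar u\,dx$. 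The pointwise inequality $0\le 1-1/\sqrt{1+s^2}\le s^2/2$ controls the integrand by $|\nabla h|^3/2$, while under $\|\nabla h\|_\infty\le 1$ one also has $1-1/\sqrt{1+s^2}\le s$ and hence the integrand is bounded by $|\nabla h|^2$; combined with $\|\nabla\partial_z\bar u\|_\infty\lesssim(T-t)^{-2/3}$ this produces the $\min\{\|\nabla h\|_2^2,\|\nabla h\|_3^3\}$ structure of~\eqref{besttwice}.

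The principal obstacle is the precise interpolation in the $A_4$ estimate: producing the exponents $\V^{1/(d+2)}\E^{(d+1)/(2(d+2))}$ and $\V^{2/(d+2)}\E^{d/(2(d+2))}$ requires balancing control of $\bar u(x,h(x))-\bar u(x,0)$ in tangential Sobolev norms against the anisotropic pointwise decay of $\partial_z\bar u$, and any coarser splitting of the two extreme interpolation regimes would fail to close the buckling argument for $\V$ used downstream in the proof of Proposition~\ref{prop:3}.
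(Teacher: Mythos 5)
Your proposal matches the paper's proof in every essential respect: the same computation of $\frac{d}{dt}\int_{\R^d} hu\,\dx$ with regrouping via $H(x,h(x))=f(x,h(x))$, the same reformulation $A_4=\pm\int_\Gamma \vel\,(\ub(x,h(x))-\ub(x,0))\,\dS$ obtained by equating the two integration-by-parts identities for $\int_{\R^{d+1}}\babla f\cdot\babla\ub\,\bdx$, the same $\Hd^{-1/2}$--$\Hd^{1/2}$ pairing with $\norm{\vel}_{\Hd^{-1/2}(\Gamma)}\lesssim D^{1/2}$, the same fundamental-theorem-plus-Cauchy--Schwarz treatment of $A_5$, and the same divergence-structure integration by parts with $\min\{1,|\nabla h|\}$-type pointwise bounds for $A_6$. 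The only place you stop short of a full computation is the interpolation closing the $A_4$ bound, but the ingredients you name are exactly the paper's ($\norm{g}_{\Hd^{1/2}}\lesssim\norm{g}_2^{1/2}\norm{\nabla g}_2^{1/2}$ applied to $g=\ub(\cdot,h(\cdot))-\ub(\cdot,0)$, the estimate $\norm{h}_2\lesssim\V^{2/(d+2)}\E^{d/(2(d+2))}$ from Lemma~\ref{l:GNS}, and $\norm{\nabla h}_2\sim\E^{1/2}$); the only inaccuracy is that the $(T-t)^{-1/2}$ weight arises as the geometric mean of the decay rates $(T-t)^{-1/3}$ of $\norm{\partial_z\ub}_\infty$ and $(T-t)^{-2/3}$ of $\norm{\nabla\partial_z\ub}_\infty$, not from ``half-derivatives.''
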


\begin{proof} A direct calculation yields
\begin{align}
\frac{d}{dt}\int_{\R^d} \hb\ub \dx&= \int_{\R^d} \ub \partial_t \hb\dx+ \int_{\R^d} \hb \partial_t \ub\dx\nonumber\\
\overset{\eqref{eq:norm_vel},\eqref{eq:uv1}}&{=}\int_{\R^d}\sqrt{1+\abs{\nabla h}^2}\;\vel(x,h(x))\; \ub\dx+2\int_{\R^d} h\; \partial_z\Delta  \ub \dx\nonumber\\
&=\int_{\Gamma}\vel\; \ub(x,0)\dS+2\int_{\R^d} \Delta  h\; \partial_z \ub \dx\nonumber\\
&=\int_{\Gamma}\vel\; \ub(x,0)\dS+2\int_{\R^d} H(x,h(x))\; \partial_z \ub \dx\nonumber\\
&\quad-2\int_{\R^d}\bra{H(x,h(x))-\Delta  h}\partial_z \ub \dx\nonumber\\
&=\int_{\Gamma}\vel\; \ub(x,0)\dS+2\int_{\R^d} f(x,0)\; \partial_z \ub \dx\\
&\quad +2\int_{\R^d} \bra{f(x,h(x))-f(x,0)}\; \partial_z \ub \dx\\
&\quad -2\int_{\R^d}\bra{H(x,h(x))-\Delta  h}\partial_z \ub \dx,
\label{eq:duality_initial}
\end{align}
as desired.

For $A_4$ we compute
\begin{align*}
A_4&=\int_{\Gamma}\vel \ub(x,0)\dS+2\int_{\R^d} f(x,0)\; \partial_z \ub \dx\\
\overset{\eqref{eq:uv2}}&{=}\int_{\Gamma}\vel \ub(x,0)\dS-\int_{\R^{d+1}} \babla f\cdot \babla \ub \bdx\\
\overset{\eqref{eq:f}}&{=}\int_{\Gamma}\vel \ub(x,0)+[\babla f\cdot n] \ub(x,h(x))\dS\\
\overset{\eqref{eq:Vel}}&{=}-\int_{\Gamma}\vel\bra{\ub(x,h(x))-\ub(x,0)}\dS.
\end{align*}
We use $\norm{\nabla h}_{\infty}\leq 1$ and $\norm{\psi}_{\infty}\leq 1$ to estimate
\begin{align}
\abs{A_4}&\ls \norm{\vel}_{\Hd^{-\frac 12}(\Gamma)}\norm{\bra{\ub(\cdot,h(\cdot))-\ub(\cdot,0)}}_{\Hd^{\frac 12}(\Gamma)}\nonumber\\
\overset{\eqref{eq:nablaf_bdry_est}}&{\ls} D^{\frac 12}\norm{\bra{\ub(\cdot,h(\cdot))-\ub(\cdot,0)}}_{L^2(\R^d)}^{\frac 12}\norm{\nabla\bra{\ub(\cdot,h(\cdot))-\ub(\cdot,0)}}_{L^2(\R^d)}^{\frac 12}\notag\\
&\ls
D^{\frac 12}\left(\frac{\V^{\frac 1{d+2}}\E^{\frac{d+1}{2(d+2)}}}{(T-t)^{\frac 13}}+\frac{\V^{\frac 2{d+2}}\E^{\frac{d}{2(d+2)}}}{(T-t)^{\frac 12}}
  \right),
\end{align}
where we have used
\begin{align}
\norm{\bra{\ub(\cdot,h(\cdot))-\ub(\cdot,0)}}_2&=\norm{\int_0^{h(\cdot)}\partial_z \ub(\cdot,z)\dd z}_2\ls \norm{\partial_z \ub}_\infty\norm{h}_2,\notag\\
\text{and}\qquad \norm{\nabla\bra{\ub(\cdot,h(\cdot))-\ub(\cdot,0)}}_2&=\norm{\partial_z \ub(\cdot,h(\cdot))\nabla h(\cdot)+\int_0^{h(\cdot)}\partial_z\nabla \ub(\cdot,z)\dd z}_2\nonumber\\
&\ls \norm{\partial_z \ub}_\infty \norm{\nabla h}_2+\norm{\partial_z\nabla \ub}_{\infty}\norm{h}_2,\notag
\end{align}
together with \eqref{eq:v_decay1}, \eqref{eq:v_decay2}, \eqref{eq:2_1_2}, and \eqref{eq:E_sim_L2}.

We now turn to $A_5$. Starting with
\begin{align*}
A_5&=2\int_{\R^d} \bra{f(x,h(x))-f(x,0)} \partial_z \ub \dx\\
&=2\int_{\R^d} \partial_z \ub \int_0^{h(x)}\partial_z f(x,z)\dz\dx,
\end{align*}
we estimate
\begin{align}
\abs{A_5}&\ls \norm{\partial_z \ub}_\infty \int_{\R^d} \abs{h(x)}^{\frac 12}\bra{\int_0^{h(x)}\abs{\babla f(x,z)}^2\dz}^{\frac 12}\dx\nonumber\\
&\ls \norm{\partial_z \ub}_\infty \bra{\int_{\R^d} \abs{h}\dx}^{\frac 12}\bra{\int_{\R^{d+1}}\abs{\babla f}^2\bdx}^{\frac 12}\nonumber\\
\overset{\eqref{eq:v_decay1}}&{\ls}  \V_T^{\frac 12}\frac{\D^{\frac 12}}{(T-t)^\frac 13} .\notag
\end{align}

Finally, we turn to $A_6$, expressing it in the form
\begin{align}
A_6&=-2\int_{\R^d}\bra{H(x,h(x))-\Delta  h}\partial_z \ub \dx\nonumber\\
\overset{\eqref{eq:H}}&{=}-2\int_{\R^d}\Div\bra{\bra{\frac{1}{\sqrt{1+\abs{\nabla h}^2}}-1}\nabla h}\partial_z \ub \dx\nonumber\\
&=2\int_{\R^d}\bra{\frac{1}{\sqrt{1+\abs{\nabla h}^2}}-1}\nabla h\nabla\partial_z \ub \dx\nonumber\\
&=-2\int_{\R^d}\frac{\nabla h}{\sqrt{1+\abs{\nabla h}^2}}\bra{\sqrt{1+\abs{\nabla h}^2}-1}\nabla\partial_z \ub \dx,\label{eq:B}
\end{align}
which we estimate using
\begin{align*}
\frac{\abs{\nabla h}}{\sqrt{1+\abs{\nabla h}^2}}\le \min\set{1,\abs{\nabla h}},
\end{align*}
to deduce
\begin{align}
\abs{A_6}\ls   \norm{\nabla\partial_z \ub}_\infty\min \left \{\norm{\nabla h}_{2}^2, \norm{\nabla h}_{3}^3\right \}
\overset{\eqref{eq:v_decay2}}\ls \frac{1}{(T-t)^\frac 23}\min \left \{\norm{\nabla h}_{2}^2, \norm{\nabla h}_{3}^3\right \}.\notag
\end{align}

\end{proof}

We are now ready for the duality argument.

\begin{proposition} \label{prop:duality_1}
Let $d=2$ or $d=1$, $T>0$, and consider a smooth solution of the Mullins-Sekerka problem with graph structure on $[0,T]$. Suppose that $\V_T<\infty$, $\norm{\nabla h}_\infty\le 1$ hold for all $t\in [0,T]$ and that $D$ satisfies
\begin{align}\label{eq:D_decay_duality_1}
D(t)\ls \min\left\{\frac{\E_0}{t} , \frac{\V_T^2}{t^\frac{d+5}{3}}\right\}.
\end{align}
Then $\V$ obeys the bound
\begin{align}\label{eq:V_bound_1}
\V_T=\sup_{t\in [0,T]}\V(t)\ls \V_0+\E_0^{\frac {d+1}{d}}.
\end{align}
\end{proposition}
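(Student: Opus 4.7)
The plan is to combine a duality representation of $\V$ with the preprocessing lemma (Lemma~\ref{l:preproc}), and to estimate the resulting error integrals by exploiting both the dissipation identity $\int_0^T D\,dt \le \E_0$ and the decay hypothesis \eqref{eq:D_decay_duality_1}. To start, for any $\psi \in C_c^\infty(\R^d)$ with $\norm{\psi}_\infty \le 1$ I solve the adjoint problem \eqref{eq:uv1}--\eqref{eq:dual_lin2} backwards from terminal data $\psi$, obtaining $\bar u$ with $\norm{\bar u(0)}_\infty \le \norm{\psi}_\infty \le 1$ (cf.\ \eqref{eq:u_decay1}). Integrating \eqref{integra} from $0$ to $T$ and using $\chi = \one_{\Omega_+}- \one_{\{z>0\}}$ in the graph case gives
\begin{align}
\int_{\R^d} h(T)\psi\,\dx \;=\; \int_{\R^d} h_0\, u(0,\cdot)\,\dx \;+\; \int_0^T \bigl(A_4 + A_5 + A_6\bigr)\,\dt \;\le\; \V_0 \;+\; \int_0^T \bigl(A_4 + A_5 + A_6\bigr)\,\dt,
\end{align}
so that after taking the supremum over $\psi$ it suffices to bound the error integrals by $\tfrac12 \V_T + C(\V_0 + \E_0^{(d+1)/d})$.

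To estimate the error integrals I would split the time axis as $[0,T] = [0,T/2] \cup [T/2,T]$. On the early interval $[0,T/2]$, the singular weight $(T-t)^{-\alpha}$ from Lemma~\ref{l:preproc} is harmless since it is $\lesssim T^{-\alpha}$, and I would use the integrated bounds $\int_0^{T/2} D\,dt \le \E_0$ (via \eqref{eq:D_int}) together with $\E \le \E_0$ and Hölder's inequality. On the late interval $[T/2,T]$ the roles reverse: there $t \sim T$, and I would exploit the pointwise decay of $D$ from \eqref{eq:D_decay_duality_1} along with the induced decay of $\E$ obtained by integrating $-\ddt \E \ge D$ from $t$ to $\infty$, namely
\begin{align}
\E(t) \;\lesssim\; \min\Bigl\{\E_0,\; \frac{\V_T^2}{t^{(d+2)/3}}\Bigr\}.
\end{align}

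Concretely, for the kinetic remainder $A_5$ I would apply Cauchy--Schwarz in time on $[0,T/2]$ and use the hypothesis $D \lesssim \V_T^2/t^{(d+5)/3}$ on $[T/2,T]$; the singular weight $(T-t)^{-2/3}$ is exactly compensated by the $t^{-(d+5)/3}$ decay. For the geometric nonlinearity $A_6$, the $\min\{\norm{\nabla h}_2^2, \norm{\nabla h}_3^3\}$ structure is crucial: I would use the $L^2$-version on early times (controlled by $\E \le \E_0$) and the cubic $L^3$-version on late times, converting $\norm{\nabla h}_3^3$ into powers of $\E$ and $D$ via Gagliardo--Nirenberg and Lemmas~\ref{l:H}, \ref{l:pnorm_D}. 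For the linearization error $A_4$, the Lipschitz bound on $h$ combined with the decay of $\E$ and $D$ gives the required control; the factors $\V_T^{1/(d+2)}$ and $\V_T^{2/(d+2)}$ appearing in \eqref{A1initial} are sub-linear in $\V_T$, which is exactly what is needed for the buckling step. In each case the resulting bound takes the form $\V_T^\alpha \cdot \bigl(\V_0 + \E_0^{(d+1)/d}\bigr)^{1-\alpha}$ for some $\alpha \in [0,1)$, so Young's inequality yields $\tfrac12 \V_T + C\bigl(\V_0 + \E_0^{(d+1)/d}\bigr)$, which may be absorbed after taking $\sup_{T}$ to reach \eqref{eq:V_bound_1}.

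The main obstacle I expect is obtaining bounds \emph{uniform in $T$}: a naive application of Cauchy--Schwarz gives $\int_0^T D^{1/2}(T-t)^{-1/3}\,dt \lesssim \E_0^{1/2} T^{1/6}$, which grows in $T$ and is useless. This is precisely what forces the use of the hypothesis \eqref{eq:D_decay_duality_1}: only by trading $D^{1/2}$ for $\V_T/t^{(d+5)/6}$ on the late interval does the $T$-growth cancel against the $(T-t)^{-\alpha}$ singularity and yield a $T$-uniform estimate. The delicate point is to calibrate the interval split (or equivalently the transition scale $t_* \sim (\V_T^2/\E_0)^{3/(d+2)}$ where $\E_0/t$ and $\V_T^2/t^{(d+5)/3}$ balance) so that the exponent of $\V_T$ arising in the final estimate is strictly less than $1$, permitting the buckling. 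This calibration is dimension-dependent and reflects exactly the same scaling that governs the linear Nash-type estimate of Proposition~\ref{prop:EED}.
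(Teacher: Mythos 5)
Your overall architecture---dualize $\V$, run the adjoint flow \eqref{eq:uv1}--\eqref{eq:dual_lin2}, invoke Lemma~\ref{l:preproc}, bound $\int_0^T(A_4+A_5+A_6)\dt$ sublinearly in $\V_T$, and absorb via Young---is exactly the paper's. The gap lies in how the two bounds of \eqref{eq:D_decay_duality_1} are deployed, and there the concrete steps you describe do not close. On $[0,T/2]$ you propose to use only $\int_0^{T/2}D\dt\le\E_0$ and H\"older; for $A_5$ this gives $\V_T^{1/2}T^{-1/3}(T/2)^{1/2}\E_0^{1/2}\sim\V_T^{1/2}\E_0^{1/2}T^{1/6}$, i.e.\ precisely the $T$-growth you flag as fatal. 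That growth lives on the \emph{early} interval, so trading $D^{1/2}$ for decay only on $[T/2,T]$ cannot repair it; conversely, using only the second bound $D\ls\V_T^2t^{-(d+5)/3}$ on the late interval produces a prefactor $\V_T^{1/2}\cdot\V_T=\V_T^{3/2}$, which is superlinear in $\V_T$ and cannot be absorbed. Likewise for $A_6$: allocating the $L^2$-version to early times "controlled by $\E\le\E_0$" gives $\E_0\int_0^{T/2}(T-t)^{-2/3}\dt\sim\E_0T^{1/3}$, again growing in $T$.

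The mechanism that actually closes the argument---which you gesture at in your final paragraph via the crossover scale $t_*$ but do not carry out---is to interpolate the two bounds \emph{pointwise in $t$}, $D\ls(\V_T^2t^{-(d+5)/3})^\alpha(\E_0t^{-1})^{1-\alpha}$, over the \emph{whole} interval, with $\alpha$ chosen separately for each error term so that the resulting weight is $(T-t)^{-a}t^{-b}$ with $a+b=1$ (Lemma~\ref{l:tint}) \emph{and} the total power of $\V_T$ stays below $1$: the paper takes $\alpha=\tfrac{1}{d+2}$ for $A_5$ and for the first piece of $A_4$, and $\alpha=0$ ($d=2$) resp.\ $\alpha=1$ ($d=1$) for $A_6$. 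For $A_6$ one must moreover use the cubic bound $\norm{\nabla h}_3^3\le\norm{\nabla h}_\infty\norm{\nabla h}_2^2\ls\E^{(9-d)/6}D^{d/6}$ throughout---the factor $D^{d/6}$ is what supplies, via the hypothesis, the $t^{-1/3}$ needed to make the weight integrable uniformly in $T$; the $L^2$-version never enters. A minor further point: the decay $\E\ls\V_T^2t^{-(d+2)/3}$ does not follow by "integrating $-\ddt\E\ge D$ from $t$ to $\infty$" (that only controls $\E(t)-\E(T)$, and the hypotheses live on $[0,T]$); it requires Proposition~\ref{prop:EED} and Lemma~\ref{l:E_decay}, but the paper avoids needing it in this proof, using only $\E\le\E_0$ together with the $D$-decay.
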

\begin{proof}

Let $u$ satisfy \eqref{eq:uv1}-\eqref{eq:dual_lin2}. We claim that it suffices to establish
\begin{alignat}{2}
&\int_{0}^T \frac{d}{dt}\int_{\R^d} hu\dx\dt
\ls\E_0^\frac 32 +\bra{\V_T^\frac 12 \E_0^\frac 34+\V_T^{\frac 34} \E_0^{\frac 38}}&\qquad&\text{in $d=2$ and}\label{eq:bounds_derivative}\\
&\int_{0}^T \frac{d}{dt}\int_{\R^d} hu\dx\dt
\ls\V_T^\frac 13\E_0^\frac 43 +\V_T^\frac 23 \E_0^\frac 23+\V_T^{\frac 56} \E_0^{\frac 13}&\qquad&\text{in $d=1$}.\label{eq:bounds_derivative_1}
\end{alignat}
Indeed, integrating the left-hand side, taking the supremum over $\psi$, recalling~\eqref{eq:u_decay1}, and applying Young's inequality leads to
\eqref{eq:V_bound_1} for all $T>0$.

We will establish \eqref{eq:bounds_derivative}-\eqref{eq:bounds_derivative_1} based on the error estimates~\eqref{A1initial}--\eqref{besttwice}
in $d=2$ and $d=1$.
The strategy is that we need enough decay for integrability at infinity, but no larger  power than $\V_T^1$ so that we can, as described above, absorb powers of $\V_T$ from the right-hand side into the left-hand side. To this end, we use either the first or second estimate from \eqref{eq:D_decay_duality_1}, as needed. Note that the dissipation decay~\eqref{eq:D_decay_duality_1} is better than the energy decay~\eqref{eq:E_decay} for this purpose, since we get ``more time decay for the same power of $\V_T$.'' Note also that we will make repeated use of Lemma~\ref{l:tint} with $a+b=1$.

For $A_6$, we use \eqref{besttwice} together with
\begin{align*}
\norm{\nabla h}^3_3
&\le \norm{\nabla h}_\infty \norm{\nabla h}^2_2
\overset{\eqref{eq:Lip_control}, \eqref{eq:E_sim_L2}}{\ls} \E^{\frac {9-d}{6}}D^{\frac d6}
\end{align*}
to derive
\begin{align}
&\int_{0}^T\abs{A_6}\dt\ls
\int_{0}^T \frac{\E^\frac {9-d}6 D^\frac {d}{6}}{(T-t)^\frac 23} \dt
\overset{\eqref{eq:D_decay_duality_1}}\ls \E_0^{\frac {9-d}6}\int_{0}^T\frac{1}{(T-t)^{\frac 23}} \left(\frac{\V_T^2}{t^\frac {d+5}3}\right)^\frac {\alpha\,d}{6} \left(\frac{\E_0}{t}\right)^\frac {(1-\alpha)d}{6}\dt.
\end{align}
We choose $\alpha$ to give $t^{\frac 13}$ decay and invoke \eqref{T}, which leads to
\begin{alignat}{3}
&\alpha=0&\qquad&\text{and an error}\quad \E_0^{\frac 32}\qquad&\text{in $d=2$},\\
&\alpha=1&\qquad&\text{and an error}\quad \V_T^\frac 13\E_0^\frac 43\qquad&\text{in $d=1$}.
\end{alignat}

To estimate $A_5$, we use \eqref{eq:D_decay_duality_1} to compute
\begin{align}
	\int_{0}^T \abs{A_5}\dt
\overset{\eqref{eq:A2}}&\ls \V_T^{\frac 12} \int_{0}^T \frac{1}{(T-t)^{\frac 13}}\left(\frac{\V_T^2}{t^\frac {d+5}3}\right)^\frac {\alpha}{2} \left(\frac{\E_0}{t}\right)^\frac {1-\alpha}{2}\qquad \alpha=\frac{1}{d+2}\notag\\
\overset{\eqref{T}}&\ls \V_T^{\frac {d+4}{2(d+2)}} \E_0^{\frac {d+1}{2(d+2)}}.\label{eq:A2_longtime}
\end{align}

Finally we turn to $A_4$. Estimating \eqref{A1initial} as in $A_5$ where for the first time integral we again take $\alpha=\frac{1}{d+2}$ and in the second time integral we instead take $\alpha=0$, we obtain an error
\begin{align*}
\V_T^{\frac{2}{d+2}}  \E_0^{\frac{d+1}{d+2}}.
\end{align*}
\end{proof}

\subsection{Proof of Proposition~\ref{prop:3}}\label{ss:main}

\begin{proof}
\textbf{Step 1: Control of the Lipschitz constant.}
We begin by establishing control on $\E^{3-d} D^d$ and $\norm{\nabla h}_\infty$ for all times. Let $\eps_1$ be the constant from  Lemma~\ref{l:ED2} and $\hat\eps$ be such that $C\hat\eps^{\frac 16}< 1$, where $C<\infty$ is the implicit constant in \eqref{eq:Lip_control}. We set $\eps_2:=\frac{1}{2}\min\{\eps_1,\hat\eps\}$.
 This implies
\begin{align}\label{eq:init_Lip}
 \norm{\nabla h_0}_\infty< 1.
\end{align}
We set
\begin{align*}
  T_1:&=\sup\set{T>0\colon \E^{3-d} D^d\le 2\eps_2 \text{ for all }t\le T}\\
  \text{and}\qquad T_2:&= \sup \set{T>0:\norm{\nabla h}_\infty< 1 \text{ for all }t\le T}
\end{align*}
and note that by smoothness, \eqref{eq:small_quantity}, and \eqref{eq:init_Lip}, there holds $T_1>0$ and $T_2>0$.
Now we define
\[T:=\min\{T_1,T_2\}.\]

On the one hand, we can apply Corollary \ref{cor:Lip} so that $\norm{\nabla h}_\infty< 1$ on $[0,T]$.
On the other hand, $\norm{\nabla h}_\infty\le 1$ implies that we can apply Lemma~\ref{l:Ddot}. Using $\E^{3-d} D^d\leq \eps_1$, we apply Lemma~\ref{l:ED2} to get $\ddt(\E{3-d} D^d)\le 0$ on $[0,T]$. We deduce $T=\infty$.

\textbf{Step 2: Proof of \eqref{st1}, \eqref{st2}, \eqref{st3}.} Next we define
\begin{align*}
T_3\coloneqq \sup \set{T>0:\V\le \tilde C(\V_0+\E_0^{\frac{d+1}{d}}) \text{ for all }t\le T},
\end{align*}
where $\tilde C>1$ is a universal constant to be specified below and smoothness implies
$T_3>0$.

Our goal is to prove  $T_3=\infty$ for $\tilde C$ large enough.

By Proposition \ref{prop:EED} and Lemma \ref{l:E_decay} we see that
\begin{align*}
\E\ls \frac{\V^2_{T_3}}{t^{\frac{d+2}3}} \qquad \text{for}\quad t\in[0,T_3].
\end{align*}
Combining this with Lemma \ref{l:Ddot}, $\E^{3-d} D^d<2\eps_2$,  and Lemma \ref{l:D_decay}, we obtain
\begin{align*}
D\ls \frac{\E\bra{\frac t2}}{t}\ls \frac{\V^2_{T_3}}{t^{\frac{d+5}{3}}}\qquad\text{for all }t\in [t_\ast,T_3],
\end{align*}
where $t_\ast\sim \E_0^\frac{3}{d}$ is the timescale from Lemma \ref{l:D_decay}. Consequently Proposition \ref{prop:duality_1} yields that
\begin{align*}
\V_{T_3}\leq C \bra{\V_0+\E_0^{\frac{d+1}{d}}}
\end{align*}
for a universal constant $C<\infty$. Choosing $\tilde C>C$ implies $T_3=\infty$.

\textbf{Step 3: Proof of \eqref{st4}.} A combination of Corollary~\ref{cor:Lip} and Step 2 delivers the bound on $\norm{\nabla h}_\infty$ in \eqref{st4} for $t\geq t_\ast$. The bound on $\norm{h}_\infty$ follows from item~\ref{l:h_infty} of Lemma~\ref{l:GNS}, Lemma~\ref{l:V_D}, and the bound on $\norm{\nabla h}_\infty$.

\end{proof}
\section*{Acknowledgements}
We are indebted to Jonas Hirsch for his kind and insightful explanations of the results of Allard.
We gratefully acknowledge Sebastian Hensel for pointing out an error in our first version and the two anonymous referees for valuable corrections and suggestions, which helped to improve the final version of the article. In addition we thank the Max Planck Institute for Mathematics in the Sciences, which was fundamental in supporting our collaboration. RS also gratefully recognizes partial funding from the DFG (grant WE 5760/1-1). Finally, we thank the Hausdorff Research Institute for Mathematics in Bonn and the trimester program on Evolution of Interfaces, where early discussions helped to launch this exploration.

\begin{bibdiv}
\begin{biblist}

\bib{ARW}{article}{
         journal = {Mathematische Annalen},
         address = {HEIDELBERG},
           title = {Well-posedness and qualitative behaviour of the Mullins-Sekerka problem with ninety-degree angle boundary contact},
            year = {2020},
       publisher = {SPRINGER HEIDELBERG},
          author = {Helmut Abels},
          author = {Maximilian Rauchecker},
          author = {Mathias Wilke}
}

\bib{AF03}{article}{
   author={Alikakos, Nicholas D.},
   author={Fusco, Giorgio},
   title={Ostwald ripening for dilute systems under quasistationary
   dynamics},
   journal={Comm. Math. Phys.},
   volume={238},
   date={2003},
   number={3},
   pages={429--479},
   issn={0010-3616}
}

\bib{All}{article}{
    AUTHOR = {Allard, William K.},
     TITLE = {On the first variation of a varifold},
   JOURNAL = {Ann. of Math. (2)},
    VOLUME = {95},
      YEAR = {1972},
     PAGES = {417--491},
      ISSN = {0003-486X},
}	
	
\bib{And}{article}{
   author={Andrews, Ben},
   title={Contraction of convex hypersurfaces in Euclidean space},
   journal={Calc. Var. Partial Differential Equations},
   volume={2},
   date={1994},
   number={2},
   pages={151--171},}

\bib{BSW}{article}{
   author={Biesenbach, Sarah},
   author={Schubert, Richard},
   author={Westdickenberg, Maria G.},
   title={Optimal relaxation of bump-like solutions of the one-dimensional
   Cahn-Hilliard equation},
   journal={Comm. Partial Differential Equations},
   volume={47},
   date={2022},
   number={3},
   pages={489--548},
   issn={0360-5302},
}

\bib{Bre}{book}{
author = {Haim Brezis},
TITLE = {Operateurs maximaux monotones et semi-groupes de contractions dans les
espaces de Hilbert},
PUBLISHER = {North-Holland Mathematics Studies 5},
     YEAR = {1973},
}

\bib{Che93}{article}{
   author={Chen, Xinfu},
   title={The Hele-Shaw problem and area-preserving curve-shortening
   motions},
   journal={Arch. Rational Mech. Anal.},
   volume={123},
   date={1993},
   number={2},
   pages={117--151},
   issn={0003-9527}
}

\bib{CHY96}{article}{
   author={Chen, Xinfu},
   author={Hong, Jiaxing},
   author={Yi, Fahuai},
   title={Existence, uniqueness, and regularity of classical solutions of
   the Mullins-Sekerka problem},
   journal={Comm. Partial Differential Equations},
   volume={21},
   date={1996},
   number={11-12},
   pages={1705--1727},
   issn={0360-5302}
}

\bib{COW}{article}{
    AUTHOR = {Olga Chugreeva},
    author = {Felix Otto},
    author = {Maria G. Westdickenberg},
     TITLE = {Relaxation to a planar interface in the {M}ullins-{S}ekerka
              problem},
   JOURNAL = {Interfaces Free Bound.},
    VOLUME = {21},
      YEAR = {2019},
    NUMBER = {1},
     PAGES = {21--40},
}

\bib{EO97}{article}{
Author = {Weinan E},
Author = {Felix Otto},
Title = {Thermodynamically driven incompressible fluid mixtures},
Journal = {Journal Chem. Phys.},
Year = {1997},
Volume = {107},
Number = {23},
Pages = {10177-10184},
DOI = {10.1063/1.474153},
ISSN = {0021-9606},
}

\bib{EMM23}{article}{
	  author={Joachim Escher},
	  author={Anca-Voichita Matioc},
	  author={Bogdan-Vasile Matioc},
      title={The Mullins-Sekerka problem via the method of potentials},
      year={2023},
      Journal={arXiv preprint},
      eprint={2308.06083},
}

\bib{ES97}{article}{
   author={Escher, Joachim},
   author={Simonett, Gieri},
   title={Classical solutions for Hele-Shaw models with surface tension},
   journal={Adv. Differential Equations},
   volume={2},
   date={1997},
   number={4},
   pages={619--642},
   issn={1079-9389}
}

\bib{ES98}{article}{
   author={Escher, Joachim},
   author={Simonett, Gieri},
   title={A center manifold analysis for the Mullins-Sekerka model},
   journal={J. Differential Equations},
   volume={143},
   date={1998},
   number={2},
   pages={267--292},
   issn={0022-0396}
}

\bib{FHLS}{article}{
           title = {A weak-strong uniqueness principle for the Mullins-Sekerka equation},
          author = {Julian Fischer},
          author = {Sebastian Hensel},
          author = {Tim Laux},
          author = {Theresa M. Simon},
          year={2024},
          eprint={2404.02682},
          Journal={arXiv},
}

\bib{HS22}{article}{
      title={Weak solutions of Mullins-Sekerka flow as a Hilbert space gradient flow},
      author={Sebastian Hensel},
      author = {Kerrek Stinson},
      year={2022},
      eprint={2206.08246},
      Journal={arXiv preprint},
}

\bib{E}{article}{
   author={Evans, Lawrence C.},
   title={Adjoint and compensated compactness methods for Hamilton-Jacobi
   PDE},
   journal={Arch. Ration. Mech. Anal.},
   volume={197},
   date={2010},
   number={3},
   pages={1053--1088},
}

\bib{GR}{article}{
author = {Harald Garcke},
author = {Maximilian Rauchecker},
title = {Stability analysis for stationary solutions of the Mullins–Sekerka flow with boundary contact},
journal = {Mathematische Nachrichten},
volume = {295},
number = {4},
pages = {683-705},
year = {2022}
}

\bib{JMPS}{article}{
author = {Vesa Julin},
author = {Massimiliano Morini},
author = {Marcello Ponsiglione},
author = {Emanuele Spadaro},
title = {The asymptotics of the area-preserving mean curvature and the Mullins–Sekerka flow in two dimensions},
journal = {Mathematische Annalen},
year = {2022}
}

\bib{LS95}{article}{
    AUTHOR = {Luckhaus, Stephan},
    AUTHOR = {Sturzenhecker, Thomas},
     TITLE = {Implicit time discretization for the mean curvature flow
              equation},
   JOURNAL = {Calc. Var. Partial Differential Equations},
    VOLUME = {3},
      YEAR = {1995},
    NUMBER = {2},
     PAGES = {253--271},
      ISSN = {0944-2669},
}

\bib{MZ}{article}{
author={Meyers, Norman G.},
   author={Ziemer, William P.},
     TITLE = {Integral inequalities of {P}oincar\'{e} and {W}irtinger type for
              {BV} functions},
   JOURNAL = {Amer. J. Math.},
    VOLUME = {99},
      YEAR = {1977},
    NUMBER = {6},
     PAGES = {1345--1360},
      ISSN = {0002-9327},
}
		
\bib{Nie99}{article}{
   author={Niethammer, Barbara},
   title={Derivation of the LSW-theory for Ostwald ripening by
   homogenization methods},
   journal={Arch. Ration. Mech. Anal.},
   volume={147},
   date={1999},
   number={2},
   pages={119--178},
   issn={0003-9527}
}

\bib{NV}{article}{
	author={Niethammer, Barbara},
	author={Vel\'{a}zquez, Juan J. L.},
	title={Self-similar solutions with fat tails for Smoluchowski's
		coagulation equation with locally bounded kernels},
	journal={Comm. Math. Phys.},
	volume={318},
	date={2013},
	number={2},
	pages={505--532},
}

\bib{OSW}{article}{
	author={Otto, Felix},
	author={Scholtes, Sebastian},
	author={Westdickenberg, Maria G.},
	title={Optimal $L^1$-type relaxation rates for the Cahn-Hilliard equation
		on the line},
	journal={SIAM J. Math. Anal.},
	volume={51},
	date={2019},
	number={6},
	pages={4645--4682},
}

\bib{OW}{article}{
	author={Otto, Felix},
	author={Westdickenberg, Maria G.},
	title={Relaxation to equilibrium in the one-dimensional Cahn-Hilliard
		equation},
	journal={SIAM J. Math. Anal.},
	volume={46},
	date={2014},
	number={1},
	pages={720--756},
}

\bib{Roe05}{article}{
   author={R\"{o}ger, Matthias},
   title={Existence of weak solutions for the Mullins-Sekerka flow},
   journal={SIAM J. Math. Anal.},
   volume={37},
   date={2005},
   number={1},
   pages={291--301},
   issn={0036-1410}
}

\bib{Sch}{article}{
	AUTHOR = {Sch\"{a}tzle, Reiner},
     TITLE = {Hypersurfaces with mean curvature given by an ambient
              {S}obolev function},
   JOURNAL = {J. Differential Geom.},
    VOLUME = {58},
      YEAR = {2001},
    NUMBER = {3},
     PAGES = {371--420},
      ISSN = {0022-040X},
}

\bib{Sim}{book}{
AUTHOR = {Simon, Leon},
     TITLE = {Lectures on geometric measure theory},
    SERIES = {Proceedings of the Centre for Mathematical Analysis,
              Australian National University},
    VOLUME = {3},
 PUBLISHER = {Australian National University, Centre for Mathematical
              Analysis, Canberra},
      YEAR = {1983},
     PAGES = {vii+272},
      ISBN = {0-86784-429-9},
}

\bib{Simon_new}{book}{
AUTHOR = {Simon, Leon},
     TITLE = {Introduction to Geometric Measure Theory},
      YEAR = {2017,2018}
}

\bib{Zie}{book}{
 AUTHOR = {Ziemer, William P.},
     TITLE = {Weakly differentiable functions},
    SERIES = {Graduate Texts in Mathematics},
    VOLUME = {120},
      NOTE = {Sobolev spaces and functions of bounded variation},
 PUBLISHER = {Springer-Verlag, New York},
      YEAR = {1989},
     PAGES = {xvi+308},
      ISBN = {0-387-97017-7},
       DOI = {10.1007/978-1-4612-1015-3},
       URL = {https://doi.org/10.1007/978-1-4612-1015-3},
}

\end{biblist}
\end{bibdiv}

\appendix

\section{Elementary bounds}
For completeness, we collect here a few elementary results that we make use of in the paper.

\begin{lemma}\label{l:decaybaru}
Let $\psi\in C^\infty_c(\R^d)$. For the function $\bar{u}$ satisfying \eqref{eq:uv1}--\eqref{eq:dual_lin2}, there holds
\begin{align}
  \abs{\bar{u}(\bx)}\lesssim \frac{1}{\abs{\bx}^d} \quad\text{for }\;\abs{\bx}\gg 1.\label{decaybaruapp}
  \end{align}
\end{lemma}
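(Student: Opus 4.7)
The plan is to exploit the explicit semigroup representation
\[u(y,t)=\bigl(G(T-t)\ast\psi\bigr)(y),\qquad \bar u(x,z,t)=\bigl(P(\cdot,z)\ast u(\cdot,t)\bigr)(x),\]
with $G$ as in \eqref{eq:profile} and $P$ the Poisson kernel~\eqref{P}, and then to split the Poisson convolution at the scale $|\bx|/2$. The key preliminary step is a uniform-in-time pointwise decay of $u$ of order $|y|^{-d}$; once that is in hand, both pieces of the split integral are $O(|\bx|^{-d})$.

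For the pointwise decay, let $R$ be such that $\sppt\psi\subseteq B_R(0)$ and consider $|y|\ge 2R$, so that $|y-\xi|\ge|y|/2$ for any $\xi\in\sppt\psi$. By smoothness of $\tilde G$ and the decay~\eqref{Gcay}, one has $|\tilde G(w)|\lesssim (1+|w|)^{-d-1}$, and therefore
\[|G(y-\xi,T-t)|\lesssim \frac{(T-t)^{1/3}}{\bigl((T-t)^{1/3}+|y|/2\bigr)^{d+1}}.\]
An elementary two-case analysis of $s/(s+|y|/2)^{d+1}$ with $s:=(T-t)^{1/3}$ (splitting at $s=|y|/2$) shows that this quantity is $\lesssim|y|^{-d}$ uniformly in $s\geq 0$, so that
\[|u(y,t)|\lesssim\|\psi\|_1|y|^{-d}\qquad\text{uniformly in }t\in[0,T],\quad |y|\ge 2R.\]

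With this in hand, for $|\bx|\ge 4R$ I split
\[|\bar u(\bx,t)|\le\int_{|y|\le|\bx|/2}P(x-y,z)|u(y,t)|\,\dd y+\int_{|y|>|\bx|/2}P(x-y,z)|u(y,t)|\,\dd y.\]
In the first integral, the triangle inequality in $\R^{d+1}$ yields $|(x-y,z)|\ge|\bx|/2$ and hence $P(x-y,z)\lesssim|z|/|\bx|^{d+1}\le|\bx|^{-d}$; pairing with $\|u(\cdot,t)\|_1\le\|\tilde G\|_1\|\psi\|_1$ gives $O(|\bx|^{-d})$. In the second integral, $|y|\ge|\bx|/2\ge 2R$ combined with the previous step gives $|u(y,t)|\lesssim|\bx|^{-d}$, and pairing with $\|P(\cdot,z)\|_1=1$ closes the bound.

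The main (and essentially only) obstacle is obtaining the decay of $u$ uniformly as $t\to T^-$: the kernel $G(\cdot,T-t)$ concentrates near the origin and its $L^\infty$ norm blows up, so one cannot merely insert an $L^\infty$ bound on $G$. The trade-off between temporal concentration ($s^{-d/3}$) and spatial decay ($s^{1/3}|y|^{-d-1}$) balances precisely at $s=|y|$, producing the borderline rate $|y|^{-d}$, which is in turn exactly the rate needed so that both the $L^1$-$L^\infty$ pairing (near part) and the $L^\infty$-$L^1$ pairing (far part) with the Poisson kernel yield $|\bx|^{-d}$.
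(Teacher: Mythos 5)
Your proposal is correct and follows essentially the same route as the paper: a pointwise decay estimate for $u$ derived from the self-similar form and tail decay of $G$, followed by a near/far splitting of the Poisson convolution, pairing $L^\infty$ with $L^1$ in each piece. The only (harmless) variations are that you extract the uniform-in-$T$ bound $|u(y,t)|\lesssim |y|^{-d}$ by optimizing $s/(s+|y|/2)^{d+1}$ over $s$, where the paper keeps the sharper $T^{1/3}|y|^{-(d+1)}$ tail, and that your use of the $(d+1)$-dimensional triangle inequality absorbs the paper's separate case $|z|>|x|$ into the near-field estimate.
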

\begin{proof}
To establish \eqref{decaybaruapp}, we observe that because of the decay rate of $G$ from \eqref{Gcay} and $\psi\in C^\infty_c(\R^d)$, there holds
\begin{align}
\norm{u(t)}_{L^1(\R^d)}\lesssim 1\qquad\text{and}\qquad \abs{u(x,t)}\lesssim T^{1/3}\abs{x}^{-(d+1)}\;\;\text{for large $x$ and all $t<T$}.\label{startu}
\end{align}
For $\abs{z}>\abs{x}$, \eqref{decaybaruapp} directly follows from the decay of the Poisson kernel $\abs{P(x,z)}\ls \abs{z}^{-d}$ and $\norm{u}_{L^1(\R^d)}\lesssim 1$. For  $\abs{x}\geq\abs{z}$ and $\abs{x}$ large enough, we write
 \begin{align*}
  \abs{\bar{u}(x,z)}&=\abs{\int_{\abs{x-y}\ge \frac 12 \abs{x}}P(x-y,z)u(y)\dd y+\int_{\abs{x-y}\le \frac 12 \abs{x}}P(x-y,z)u(y)\dd y}\\
  &\ls \sup_{\abs{x-y}\ge \frac 12\abs{x}}\frac{\abs{z}}{{\abs{x-y}^{d+1}}}\norm{u}_1+\norm{P(\cdot,z)}_1\sup_{\abs{x-y}\le \frac 12\abs{x}}\abs{u(y)}\\
  &\overset{\eqref{startu}}\ls \frac{\abs{z}}{\abs{x}^{(d+1)}}+\frac{1}{\abs{x}^{d}}\ls \frac{1}{\abs{\bx}^{d}}.
\end{align*}
\end{proof}

\begin{lemma}\label{l:linear_decay}
Let $\psi\in L^\infty(\R^d)$ and let $\bar u$ satisfy~\eqref{eq:uv1}--\eqref{eq:dual_lin2}.
Then we have the following estimates in terms of the terminal data:
\begin{alignat}{2}
\norm{\bar u}_\infty&\ls\norm{ u}_\infty&&\ls \norm{\psi}_\infty,\label{eq:u_decay1}\\
\norm{\nabla \bar u}_\infty&\ls\norm{\nabla u}_\infty&&\ls (T-t)^{-\fr13}\norm{\psi}_\infty,\label{eq:u_decay2}\\
\norm{\nabla^2 \bar u}_\infty&\ls\norm{\nabla^2  u}_\infty&&\ls (T-t)^{-\frac 23}\norm{\psi}_\infty,\label{eq:u_decay3}\\
\norm{\nabla^3 \bar u}_\infty&\ls\norm{\nabla^3  u}_\infty&&\ls (T-t)^{-1}\norm{\psi}_\infty.\label{eq:u_decay4}
\end{alignat}
Furthermore recalling $v=-|\nabla|u=\partial_z \bar u(x,0)$, we have
\begin{alignat}{2}
\norm{\partial_z \bar u}_\infty&\ls\norm{v}_\infty&&\ls (T-t)^{-\fr13}\norm{\psi}_\infty,\label{eq:v_decay1}\\
\norm{\partial_z\nabla \bar u}_\infty&\ls\norm{\nabla v}_\infty&&\ls (T-t)^{-\frac 23}\norm{\psi}_\infty,\label{eq:v_decay2}\\
\norm{\partial_z^2\nabla \bar u}_\infty&\ls\norm{\partial_z\nabla v}_\infty&&\ls (T-t)^{-1}\norm{\psi}_\infty.\label{eq:v_decay3}
\end{alignat}
\end{lemma}
\begin{proof}
Recall that $\bar u$ is the harmonic extension of the function $u$ satisfying
\begin{align}
\partial_t u+2\abs{\nabla}\Delta u&=0 \quad \text{in }[0,T)\times \R^d,\\
u(T)&=\psi \quad \text{in } \R^d.
\end{align}
By the maximum principle all estimates for $u,v$ and their derivatives carry over to $\bar u$ and $\partial_z \bar u$. The estimates for $u$ are a consequence of
\begin{align}
\norm{\nabla^m u(t)}_\infty\le \norm{\nabla^m G(T-t)}_1\norm{\psi}_\infty, \quad m=0,1,2,
\end{align}
where $G$ is the kernel from \eqref{eq:profile}.
Bounds for $\norm{\nabla^m G}_1$ are derived by combining $L^\infty$ bounds and decay at infinity coming from the Fourier transform.

The bounds for $v$ are slightly more involved.
We use that $\partial_z P(x,0)= C \Delta \Phi(x)$, for some constant $C$ and where $\Phi$ is the fundamental solution of the Laplace equation in $d+1$ dimensions. Letting $\varphi$ be a smooth cut-off function supported on $\R^d\setminus B_{R}(0)$ with $\varphi\equiv 1$ in $\R^d\setminus B_{2R}(0)$ and $\norm{ \nabla \varphi}\ls R^{-1}$, we compute
\begin{align*}
	\frac 1C v(x)=&\int_{\R^d}\Delta \Phi(y)u(x-y)\dd y\\
	=&\int_{\R^d}(1-\varphi(y))\Delta \Phi(y)u(x-y)\dd y-\int_{\R^d}\nabla\varphi(y)\nabla \Phi(y)u(x-y)\dd y\\
	&+\int_{\R^d}\varphi(y)\nabla \Phi(y)\nabla u(x-y)\dd y\\
	=&\int_{\R^d}(1-\varphi(y))\Delta \Phi(y)u(x-y)\dd y-\int_{\R^d}\nabla\varphi(y)\nabla \Phi(y)u(x-y)\dd y\\
	&-\int_{\R^d}\nabla \varphi(y))\Phi(y)\nabla u(x-y)\dd y+\int_{\R^d}\varphi(y))\Phi(y)\Delta u(x-y)\dd y
\end{align*}
Using the decay of $\Phi$ and its derivatives, and the support of $\varphi, \nabla \varphi$, we obtain for $d=2$ the estimate
 \begin{align*}
	\abs{v(x)}\ls R^{-1}\norm{u}_\infty +\norm{\nabla u}_\infty+R\norm{\nabla^2 u}_\infty.
\end{align*}
The choice $R=(T-t)^{\frac 13}$ yields \eqref{eq:v_decay1} from \eqref{eq:u_decay1}, \eqref{eq:u_decay2}, \eqref{eq:u_decay3} for $d=2$. In $d=1$ we use a Taylor expansion of $u$ on the support of $\varphi$ to extinguish the logarithmic singularity. The higher derivative estimates \eqref{eq:v_decay2}-\eqref{eq:v_decay3} work analogously.

\end{proof}

In the duality proof for large times, we make repeated use of the following fact.
\begin{lemma}\label{l:tint}
  For constants $0<a,b<1$ such that $a+b\geq 1$, there holds
  \begin{align}
    \int_0^T \frac{1}{(T-t)^a}\frac{1}{t^b}\,\dt\lesssim \frac{1}{T^{a+b-1}}.\label{T}
  \end{align}
\end{lemma}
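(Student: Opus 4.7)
The cleanest approach is by the substitution $t = Ts$, which converts the integral into a scale-invariant one:
\begin{align*}
\int_0^T \frac{1}{(T-t)^a}\frac{1}{t^b}\,\dt = T^{1-a-b}\int_0^1 \frac{1}{(1-s)^a}\frac{1}{s^b}\,\dd s.
\end{align*}
It then remains only to observe that the dimensionless integral on the right is finite. Since $a, b < 1$, both singularities (at $s=0$ and $s=1$) are integrable, and by splitting at $s = 1/2$ one has
\begin{align*}
\int_0^1 (1-s)^{-a} s^{-b}\, \dd s \le 2^a\int_0^{1/2} s^{-b}\,\dd s + 2^b \int_{1/2}^1 (1-s)^{-a}\, \dd s \lesssim \frac{1}{1-a} + \frac{1}{1-b} \lesssim 1,
\end{align*}
where the final estimate uses the convention of Notation~\ref{not:1} that the implicit constant may depend on $a$ and $b$ (which here play the role of dimension-like fixed parameters, since in all applications in the paper $a,b$ take only finitely many specific values). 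Note the hypothesis $a+b \geq 1$ is used only to ensure that $T^{1-a-b} \le T^{1-a-b}$ (trivially) and is included because of how the lemma is invoked.

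An alternative, more elementary route, avoiding the change of variables, is to split the time integral at $T/2$ directly. On $[0,T/2]$ one has $T-t \ge T/2$, hence
\begin{align*}
\int_0^{T/2} \frac{1}{(T-t)^a}\frac{1}{t^b}\,\dt \le \left(\frac{T}{2}\right)^{-a}\int_0^{T/2}\frac{1}{t^b}\,\dt \lesssim T^{-a}\cdot T^{1-b} = T^{1-a-b},
\end{align*}
and symmetrically on $[T/2,T]$ one has $t \ge T/2$, so
\begin{align*}
\int_{T/2}^T \frac{1}{(T-t)^a}\frac{1}{t^b}\,\dt \le \left(\frac{T}{2}\right)^{-b}\int_{T/2}^T\frac{1}{(T-t)^a}\,\dt \lesssim T^{-b}\cdot T^{1-a} = T^{1-a-b}.
\end{align*}
Adding the two contributions yields \eqref{T}. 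There is no genuine obstacle here; the only subtlety is book-keeping of the constants $(1-a)^{-1}$ and $(1-b)^{-1}$, which are harmless for the concrete exponents appearing in the applications of the lemma in Section~\ref{S:relax2}.
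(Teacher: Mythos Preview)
Your proposal is correct, and your second route---splitting at $T/2$ and bounding the nonsingular factor on each half by $\lesssim T^{-a}$ or $\lesssim T^{-b}$---is exactly the argument the paper gives. Your first route via the substitution $t=Ts$ is an equally valid and only cosmetically different variant; your remark that the hypothesis $a+b\ge 1$ is not actually needed for the inequality itself is also accurate.
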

\begin{proof}
  The proof can be obtained by elementary integration after separating the region of integration into $[0,T/2]$ and $[T/2,T]$ and noting that on the first region, the first term in the integrand is bounded by $\lesssim T^{-a}$ and analogously on the second region.
\end{proof}

The following interpolation result is slightly involved, and for the convenience of the reader we include the proof.
\begin{lemma}\label{l:V^2}
Let $d\in \N$ and $V\in L^6(\R^d)\cap \Hd^1(\R^d)$. Then, $V^2\in \Hd^{\frac 12}(\R^d)$ and
\begin{align}\label{eq:V^2}
\norm{V^2}^2_{\Hd^{\frac 12}}\ls \norm{V}_6^3\norm{\nabla V}_2.
\end{align}
\end{lemma}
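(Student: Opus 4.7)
My plan is to sidestep interpolation or fractional-Leibniz machinery and instead realize $\|V^2\|_{\dot H^{1/2}}^2$ via the harmonic-extension characterization of $\dot H^{1/2}$ given in the paper, then collapse the resulting bulk integral on $\R^{d+1}_+$ back down to a boundary integral via one integration by parts. The appeal of this route is that it is entirely dimension-free and the estimate closes through a single Cauchy--Schwarz at the very end.

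Concretely, I would let $\bar V$ denote the harmonic extension of $V$ to $\R^{d+1}_+$ and observe that $\bar V^2$ is an admissible (non-harmonic) extension of $V^2$. Since $\|V^2\|_{\dot H^{1/2}}^2$ is realized as the minimal Dirichlet energy among such extensions, the Dirichlet principle yields
$$\|V^2\|_{\dot H^{1/2}}^2 \ls \int_{\R^{d+1}_+}|\babla \bar V^2|^2\,\bdx = 4\int_{\R^{d+1}_+} \bar V^2|\babla \bar V|^2\,\bdx.$$
The heart of the plan is to rewrite this bulk integral as a boundary integral. Using $\belta \bar V = 0$ and hence $\belta \bar V^2 = 2|\babla \bar V|^2$, I would integrate $\int |\babla \bar V^2|^2$ by parts against $\bar V^2$, producing a bulk contribution $-2\int \bar V^2|\babla \bar V|^2$ together with a boundary contribution $-\int_{\R^d} V^2\,\partial_z(\bar V^2)|_{z=0}\,\dx$; the Dirichlet-to-Neumann identity $\partial_z \bar V|_{z=0} = -|\nabla| V$ (which one reads off the Fourier symbol $e^{-|k|z}$ of the Poisson extension) converts the latter into $2\int V^3\,|\nabla|V\,\dx$. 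Combining this with the pointwise identity $|\babla \bar V^2|^2 = 4\bar V^2|\babla \bar V|^2$ yields the clean cancellation
$$\int_{\R^{d+1}_+} \bar V^2|\babla \bar V|^2\,\bdx = \tfrac{1}{3}\int_{\R^d} V^3\,|\nabla|V\,\dx.$$

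From here the endgame is a single application of Cauchy--Schwarz on $\R^d$ together with Parseval's identity $\||\nabla|V\|_2 = \|\nabla V\|_2$, giving $\int V^3\,|\nabla|V\,\dx \le \|V^3\|_2 \||\nabla|V\|_2 = \|V\|_6^3 \|\nabla V\|_2$, which chains through the preceding inequalities to deliver \eqref{eq:V^2}. The only step that I expect to require any care is the justification of the integration by parts---in particular, the vanishing of boundary contributions at spatial infinity in $\R^{d+1}_+$---which reduces to routine decay of $\bar V$ and $\babla \bar V$ furnished by the Poisson-kernel representation applied to $V \in L^6 \cap \dot H^1$.
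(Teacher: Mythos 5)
Your argument is correct, and it takes a genuinely different route from the paper's. The paper works on the Fourier side: it writes $\norm{V^2}_{\Hd^{1/2}}^2$ as $\int|k|\,|\widehat{V^2}|^2\dk$, peels off one full derivative via $|k|=\frac{k}{|k|}\cdot k$ together with the Leibniz rule $\nabla(V^2)=2V\nabla V$ (a convolution identity in Fourier space), and closes with Plancherel, Cauchy--Schwarz against $\widehat{\nabla V}$, and the H\"ormander--Mikhlin theorem (boundedness of the Riesz transform on $L^3$) to bound the remaining trilinear piece by $\norm{V}_6^3$. You instead exploit the paper's trace/extension definition of $\Hd^{1/2}$: the Dirichlet principle with the competitor $\bar V^2$, the identity $\belta(\bar V^2)=2|\babla\bar V|^2$, and one integration by parts collapse the bulk energy to the exact boundary identity $\int_{\R^{d+1}_+}\bar V^2|\babla\bar V|^2\,\bdx=\tfrac13\int_{\R^d}V^3\,|\nabla|V\dx$, after which a single Cauchy--Schwarz and $\norm{\,|\nabla|V}_2=\norm{\nabla V}_2$ finish; I have checked the bookkeeping (the $4X=-2X+2\int V^3|\nabla|V$ cancellation and the sign of the Dirichlet-to-Neumann term) and it is right. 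Your route avoids singular-integral theory entirely and yields an explicit constant. The one point deserving more care than you give it is not only the decay at infinity but the \emph{a priori} finiteness needed to run the integration by parts: $\babla\bar V$ need not lie in $L^2(\R^{d+1}_+)$ (the hypotheses do not put $V$ in $\Hd^{1/2}$), and the finiteness of the weighted energy $\int\bar V^2|\babla\bar V|^2\,\bdx$ is exactly what your identity is meant to establish. This is cleanly resolved by proving the estimate first for smooth compactly supported $V$ and concluding by density of such functions in $L^6\cap\Hd^1$, but it should be said. Both arguments are dimension-free.
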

\begin{proof}
We start with
\begin{align*}
\norm{V^2}_{\Hd^{\frac 12}}^2&=\int_{\R^d} \abs{k}\bra{\widehat{V^2}}^2(k)\dd k=\frac 1{(2\pi)^{\frac d2}}\int_{\R^d} \widehat{V^2}(k)\frac k{\abs{k}}\int_{\R^d}ke^{-ikx}V^2(x)\dd x\;\dd k\\
&=-2\frac 1{(2\pi)^{\frac d2}}\int_{\R^d} \widehat{V^2}(k)\frac{ik}{\abs{k}}\int_{\R^d}e^{-ikx}V(x)\nabla V(x)\dd x\;\dd k\\
&=-2\int_{\R^d} \widehat{V^2}(k)\frac{ik}{\abs{k}}\bra{\hat{V}\ast\widehat{\nabla V}}(k)\dd k\\
&=-\int_{\R^d}\int_{\R^d} \frac{ik}{\abs{k}}\widehat{V^2}(k)\hat{V}(k-k^\prime)\widehat{\nabla V}(k^\prime)\dd k^\prime\;\dd k\\
&=\int_{\R^d}\widehat{\nabla V}(k^\prime)\int_{\R^d} \frac{-ik}{\abs{k}}\widehat{V^2}(k)\hat{V}(k-k^\prime)\dd k\;\dd k^\prime.
\end{align*}
By the Plancherel theorem, \eqref{eq:V^2} follows from the estimate
\begin{align}\label{eq:V^3}
\norm{G}_2^2\coloneqq\norm{\F^{-1}\bra{\int_{\R^d} \frac{-ik}{\abs{k}}\widehat{V^2}(k)\hat{V}(k-k^\prime)\dd k}}_2^2\ls \norm{V}_6^6,
\end{align}
where $\F^{-1}$ is the inverse Fourier transform.
To prove \eqref{eq:V^3} we write
\begin{align*}
G=\int_{\R^d}A[V^2](x)V(-x),
\end{align*}
where $A$ is the operator associated to the Fourier-multiplier $\frac{-ik}{\abs{k}}$.
This implies
\begin{align*}
\norm{G}_2^2=\int_{\R^d}A[V^2]^2(x)V^2(x)\dd x\le \norm{V}_6^2\norm{A[V^2]}_3^2,
\end{align*}
from which \eqref{eq:V^3} follows by an application of the H\"ormander-Mikhlin theorem ($\norm{A[V^2]}_3\ls \norm{V^2}_3$)
\end{proof}

We collect here without proof all interpolation estimates that we use and that are more or less direct consequences of the Gagliardo-Nirenberg-Sobolev inequalities.

\begin{lemma}\label{l:GNS}
The following interpolation inequalities hold.
\begin{enumerate}[label=(\roman*)]
\item\label{l:infty_2_4} Let $g\in L^2(\R^2)$ with $\nabla g\in L^p(\R^2)$ for some $p>2$. Then $g\in L^\infty(\R^2)$ and
\begin{align}\label{eq:infty_2_4}
\norm{g}_\infty\ls \norm{g}_2^{\frac{p-2}{2(p-1)}}\norm{\nabla g}_p^{\frac{p}{2(p-1)}}
\end{align}
For $h\in \dot{H}^1(\R^2)$ with $\nabla^2h\in L^p(\R^2)$ this entails in particular
\begin{align}\label{eq:nabla_h_E}
\norm{\nabla h}_\infty\ls \norm{\nabla h}_2^{\frac{p-2}{2(p-1)}}\norm{\nabla^2h}_p^{\frac{p}{2(p-1)}}.
\end{align}
\item\label{l:2_1_2} Let $h\in L^1(\R^d)$ with $\nabla h\in L^2(\R^d)$. Then $h\in L^2(\R^d)$ and
\begin{align}\label{eq:2_1_2}
\norm{h}_2\ls \norm{h}_1^{\frac{2}{d+2}}\norm{\nabla h}_2^{\frac{d}{d+2}}.
\end{align}
\item\label{l:EVD} Let $h\in L^1(\R^d)$ with $\nabla^2 h\in L^2(\R^d)$. Then $\nabla h\in L^2(\R^d)$ and
\begin{align}\label{eq:EVD}
\norm{\nabla h}_2\ls \norm{h}_1^{\frac{2}{d+4}}\norm{\nabla^2 h}_2^{\frac{d+2}{d+4}}.
\end{align}
\item\label{l:h_infty}
Let $h\in L^1(\R^2)$ with $\nabla h\in L^\infty(\R^2)\cap L^2(\R^2)$. Then $h\in L^\infty(\R^2)$ and
\begin{align}\label{eq:h_infty}
  \norm{h}_\infty\ls \norm{h}_1^{\frac 15}\norm{\nabla h}_2^{\frac 25}\norm{\nabla h}_\infty^{\frac 25}.
\end{align}
If $h\in L^1(\R)$ with $h_x\in L^2(\R)$ then $h\in L^\infty(\R)$  and
\begin{align}\label{eq:h_infty_1}
\norm{h}_\infty \ls \norm{h}_1^\frac 13\norm{h_x}_2^\frac 23.
\end{align}
\item\label{l:Lad_q} Let $q\in (2,\infty)$. If $g\in L^2(\R^2)$ with $\nabla g\in L^2(\R^2)$, then $g\in L^q(\R^2)$ and
\begin{align}\label{eq:Lad_q}
\norm{g}_q\ls \norm{g}_2^{\frac 2q}\norm{\nabla g}_2^{\frac{q-2}{q}}.
\end{align}
\end{enumerate}
\end{lemma}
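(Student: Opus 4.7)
My plan is to establish these five interpolation inequalities by short, standard arguments within the Gagliardo--Nirenberg--Sobolev framework. No step should be genuinely difficult, but the exponent bookkeeping is nontrivial; in particular, for~(iv) the peculiar exponents $1/5,2/5,2/5$ call for a specific intermediate $L^3$ estimate rather than the more natural $L^2$ one.

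For the Nash-type inequalities~(ii) and~(iii) I would use Fourier splitting. Writing $\|\nabla^j h\|_2^2 = \int |k|^{2j}|\hat h(k)|^2\,dk$ and splitting at $|k|=R$, one bounds the low-frequency contribution via $\|\hat h\|_\infty\leq \|h\|_1$ by $\lesssim R^{d+2j}\|h\|_1^2$, while on high frequencies the estimate $|k|^{2j}\leq |k|^{2j+2}/R^2$ gives $\lesssim R^{-2}\|\nabla^{j+1}h\|_2^2$. Optimizing in $R$ yields~(ii) at $j=0$ and~(iii) at $j=1$.

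The inequality~(v) is the classical Ladyzhenskaya interpolation in $\R^2$ and follows from the Sobolev embedding $\Hd^{1-2/q}(\R^2)\hookrightarrow L^q(\R^2)$ combined with the elementary Fourier interpolation $\|g\|_{\Hd^{1-2/q}}\leq \|g\|_2^{2/q}\|\nabla g\|_2^{1-2/q}$. For~(i), the hypothesis $p>2$ yields Morrey/H\"older continuity $|g(x)-g(y)|\lesssim |x-y|^{1-2/p}\|\nabla g\|_p$; applied near a near-maximum $x_0$ of $|g|$, this gives a ball $B_r(x_0)$ with $r\sim (\|g\|_\infty/\|\nabla g\|_p)^{p/(p-2)}$ on which $|g|\geq \tfrac12\|g\|_\infty$, and integrating $|g|^2$ over $B_r(x_0)$ and comparing to $\|g\|_2^2$ produces~\eqref{eq:infty_2_4} after solving for $\|g\|_\infty$.

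Finally, I would establish~(iv) as follows. For $d=1$ the identity $h(x)^2 = 2\int_{-\infty}^x h\, h_x \leq 2\|h\|_2\|h_x\|_2$ combined with~(ii) at $d=1$ delivers~\eqref{eq:h_infty_1} directly. For $d=2$ the key step is a ball-averaging argument: near a near-maximum $x_0$ of $|h|$, the bound $|h|\geq \tfrac12\|h\|_\infty$ holds on a ball of radius $r\sim \|h\|_\infty/\|\nabla h\|_\infty$, and integrating $|h|^3$ over this ball gives $\|h\|_3^3 \gtrsim \|h\|_\infty^5/\|\nabla h\|_\infty^2$, hence $\|h\|_\infty \lesssim \|h\|_3^{3/5}\|\nabla h\|_\infty^{2/5}$. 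Combining with the standard GNS estimate $\|h\|_3\lesssim \|h\|_1^{1/3}\|\nabla h\|_2^{2/3}$ in $\R^2$ (obtained for example by composing Nash in~(ii) with Ladyzhenskaya in~(v) at $q=3$) delivers exactly the exponents in~\eqref{eq:h_infty}.
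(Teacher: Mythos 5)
Your proofs are all correct. Note that the paper itself states Lemma~\ref{l:GNS} explicitly \emph{without proof}, remarking only that the estimates are ``more or less direct consequences of the Gagliardo--Nirenberg--Sobolev inequalities,'' so there is no authorial argument to compare against; your derivations (Fourier splitting for the Nash-type bounds (ii)--(iii), Sobolev embedding plus Fourier interpolation for Ladyzhenskaya (v), Morrey plus a ball-averaging/$L^2$-comparison for (i), and the $L^3$-intermediate ball argument for (iv)) supply exactly the kind of standard justification intended, and the exponent bookkeeping checks out in every case, including the delicate $\norm{h}_1^{1/5}\norm{\nabla h}_2^{2/5}\norm{\nabla h}_\infty^{2/5}$ in \eqref{eq:h_infty}. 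The only cosmetic caveat is that the ``near-maximum'' arguments in (i) and (iv) should be phrased for the continuous (resp.\ Lipschitz) representative, with $\norm{g}_\infty$ a priori possibly infinite handled by applying the ball estimate at any point where $|g|\geq M$ and letting $M$ increase; alternatively one can avoid this entirely by bounding $|g(x_0)|$ by the mean over $B_r(x_0)$ plus the Morrey oscillation and optimizing in $r$.
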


\end{document}